\def\myMRbibitem{\@ifnextchar[\my@lbibitem\my@bibitem}
\def\mybiblabel#1#2{\@biblabel{{\hyperref{http://www.ams.org/mathscinet-getitem?mr=#1}{}{}{#2}}}}
\def\myhyperanchor#1{\Hy@raisedlink{\hyper@anchorstart{cite.#1}\hyper@anchorend}}
\def\my@lbibitem[#1]#2#3#4\par{%
    \item[\mybiblabel{#2}{#1}\myhyperanchor{#3}\hfill]#4%
    \@ifundefined{ifbackrefparscan}{}{\BR@backref{#3}}%
    \if@filesw{\let\protect\noexpand\immediate
       \write\@auxout{\string\bibcite{#3}{#1}}}\fi\ignorespaces%
}
\def\my@bibitem#1#2#3\par{%
    \refstepcounter\@listctr
    \item[\mybiblabel{#1}{\the\value\@listctr}\myhyperanchor{#2}\hfill]#3%
    \@ifundefined{ifbackrefparscan}{}{\BR@backref{#2}}%
    \if@filesw\immediate\write\@auxout
        {\string\bibcite{#2}{\the\value\@listctr}}\fi\ignorespaces%
}
\newcommand{\subjclass}[2][2010]{%
  \let\@oldtitle\@title%
  \gdef\@title{\@oldtitle\footnotetext{#1 \emph{Mathematics subject classification.} #2}}%
}
\newcommand{\keywords}[1]{%
  \let\@@oldtitle\@title%
  \gdef\@title{\@@oldtitle\footnotetext{\emph{Key words and phrases.} #1.}}%
}
\theoremstyle{plain}
	\newtheorem{mainthm}{Theorem} 
	\crefname{mainthm}{Theorem}{Theorems} 
	\newtheorem{theorem}{Theorem}[section]
	\newtheorem{lemma}[theorem]{Lemma}
	\newtheorem{proposition}[theorem]{Proposition}
	\newtheorem*{claim}{Claim}
\theoremstyle{remark}
	\newtheorem{remark}[theorem]{Remark}
	\newtheorem{example}[theorem]{Example}
	\newtheorem*{ack}{Acknowledgements}
\newcommand{\closeremark}{\hfill$\triangleleft$}  
\crefname{subsection}{\S}{\S\S} 
\Crefname{subsection}{\S}{\S\S} 
\crefname{subsubsection}{\S}{\S\S} 
\Crefname{subsubsection}{\S}{\S\S} 
\numberwithin{equation}{section}
\renewcommand{\epsilon}{\varepsilon}
\renewcommand{\phi}{\varphi}
\renewcommand{\setminus}{\smallsetminus}
\newcommand{\R}{\mathbb R}
\newcommand{\Z}{\mathbb Z}
\newcommand{\C}{\mathbb C}
\newcommand{\GL}{\mathrm{GL}}
\newcommand{\Isom}{\mathrm{Isom}}
\newcommand{\Kill}{\mathrm{Kill}}
\newcommand{\id}{\mathrm{id}}  
\newcommand{\Id}{\mathit{Id}}  
\DeclareMathOperator\bary{bar}
\newcommand{\dist}{d}
\newcommand{\fm}{\mathfrak{m}}
\DeclareMathOperator\displ{displ}
\DeclareMathOperator\drift{drift}
\DeclareMathOperator\speed{speed}
\newcommand{\triang}{\bigtriangleup}
\DeclareMathOperator\grad{grad}
\DeclareMathOperator\supp{supp}
\renewcommand{\mid}{\mathop{\mathrm{mid}}}
\newcommand{\bysame}{\makebox[3em]{\hrulefill}\thinspace. }
\newcommand{\arxiv}[1]{Preprint \href{http://arxiv.org/abs/#1}{arXiv:{#1}}}
\renewcommand*{\backref}[1]{}
\renewcommand*{\backrefalt}[4]{\quad \tiny 
    \ifcase #1 (Not cited.)%
    \or        (Cited on page~#2.)%
    \else      (Cited on pages~#2.)%
    \fi}
\begin{document}

\author{Jairo Bochi\footnote{Partially supported by CNPq (Brazil), FAPERJ (Brazil), and the Math-AMSUD Project DySET.} \quad \& \quad Andr\'es Navas\footnote{Partially supported by Fondecyt Project 1120131 and PIA-Conicyt Project ACT 1103 DySyRF.}}

\title{A geometric path from zero Lyapunov exponents to rotation cocycles}

\date{May, 2013}

\keywords{Isometries; cocycles; nonpositive curvature; symmetric spaces; linear drift; conjugacy; barycenter}
\subjclass[2010]{37H15; 53C35, 22F30}

\maketitle

\thispagestyle{empty} 

\begin{abstract}
We consider cocycles of isometries on spaces of nonpositive curvature~$H$. We show that the supremum of the drift over all invariant ergodic probability measures equals the infimum of the displacements of continuous sections under the cocycle dynamics. In particular, if a cocycle has uniform sublinear drift, then there are almost invariant sections, that is, sections that move arbitrarily little under the cocycle dynamics. If, in addition, $H$ is a symmetric space, then we show that almost invariant sections can be made invariant by perturbing the cocycle. 
\end{abstract}


\section{Introduction} \label{s.intro}

\subsection{Basic setting and results}\label{ss.general_D}

Let $F \!: \Omega \to \Omega$ 
be a continuous mapping of a compact Hausdorff topological space $\Omega$.
A \emph{cocycle} over the dynamics $F$ is a continuous function   
\begin{equation}\label{e.discrete_cocycle}
A \!: \Omega \to G,
\end{equation}
where $G$ is a topological group. 
We denote $A^{(0)}(\omega) := \id$ and for $n \in \Z_+$,
\begin{equation}
\label{e.products}
A^{(n)}(\omega) := A(F^{n-1} \omega) \cdots A(F \omega) A(\omega).
\end{equation}
Notice the \emph{cocycle relation}
\begin{equation}\label{e.cocycle}
A^{(n+m)}(\omega) = A^{(m)}(F^n \omega) A^{(n)}(\omega).
\end{equation}

Two cocycles $A$ and $B$ over $F$ are said to be {\em cohomologous} 
whenever there exists a continuous map $U\!: \Omega \to G$ such that
\begin{equation}\label{e.cohomologous}
A(\omega) = U(F \omega) B(\omega) U(\omega)^{-1}, 
\quad \text{for all } \omega \in \Omega.
\end{equation}

\medskip

In most of this paper, $G$ will be the group $\Isom(H)$
of isometries of a metric space $(H,d)$;
then $A$ is called a \emph{cocycle of isometries}.
We will assume at least that $H$ is a Busemann space (i.e., a separable complete geodesic space of nonpositive curvature in the sense of Busemann). 
The group $\Isom(H)$ is endowed 
with the bounded--open topology. (Definitions are given in \cref{sss.Busemann}.) 

\medskip

The \emph{maximal drift} of a cocycle of isometries is defined as 
\begin{equation}\label{e.drift_D}
\drift(F,A)  := \lim_{n\to \infty} \frac{1}{n} 
\sup_{\omega \in \Omega} \dist \big( A^{(n)}(\omega) p_0, p_0 \big).
\end{equation}
Notice that the limit exists 
by subadditivity, and is independent of the choice of $p_0 \in H$.

\begin{remark}\label{r.furman}
It follows from Kingman's subadditive ergodic theorem that 
for every ergodic probability measure $\mu$ for $F \!:\Omega\to \Omega$, the limit 
$$
\lim_{n\to \infty} \frac{1}{n} \dist \big( A^{(n)}(\omega) p_0, p_0 \big)
$$
exists for $\mu$-almost every $\omega \in \Omega$, and is a constant 
(also independent of $p_0 \in H$).
This is the \emph{drift} of the cocycle of isometries with respect to the measure~$\mu$;
let us denote it by $\drift(F,A,\mu)$.\footnote{Let us mention that the results of \cite{KarM} give important information in the case $\drift(F,A,\mu)>0$.}
The following ``variational principle'' 
holds\footnote{This follows from \cite[Thrm.~1]{Schreiber} or \cite[Thrm.~1.7]{SS}. 
Although these references assume $\Omega$ to be compact metrizable, the proofs also 
work for compact Hausdorff $\Omega$. (See also the proof of Proposition~1 in 
\cite{AB}.)}:
\begin{equation}\label{e.variational}
\drift(F,A) = \sup_\mu \drift(F,A,\mu),
\end{equation}
where $\mu$ runs over all invariant ergodic probabilities for $F$.~\closeremark 
\end{remark}

We say that the cocycle has \emph{uniform sublinear drift} if $\drift(F,A) = 0$. 
By the remark above, this happens if and only if $A$ has zero drift with 
respect to every $F$-invariant probability measure.

\medskip

The \emph{displacement} of a (continuous) section $\phi \!: \Omega \to H$  
is defined by
\begin{equation}\label{e.displacement}
\displ(\phi) := 
\sup_{\omega \in \Omega}
\dist \big( A(\omega) \phi(\omega), \phi(F \omega) \big) \, .
\end{equation}
(When necessary, we use the more precise notation $\displ_{F,A}(\phi)$.)

Notice that $\displ(\phi) = 0$ if and only if the section $\phi$ is 
\emph{invariant}, that is, $A(\omega) \phi(\omega) = \phi(F \omega)$ 
holds for every $\omega \in \Omega$.

It is not hard to show (see \cref{ss.mindispl_D}) 
that the displacement of any continuous section $\phi \!: \Omega \to H$
is at least the drift of the cocycle:
\begin{equation} \label{e.mindispl_D}
\displ(\phi) \ge \drift(F,A). 
\end{equation}
Our first main result is a converse of this fact:

\begin{mainthm}[Existence of sections of nearly minimal displacement; discrete time] \label{t.slow_D}
Assume that $H$ is a Busemann space.
Given a cocycle $A \!: \Omega \to \Isom(H)$ over 
$F \!: \Omega \to \Omega$, for each $\epsilon>0$ there exists a continuous 
section $\phi \!:\Omega \to H$ such that $\displ(\phi) \le \drift(F,A) + 
\epsilon$.
\end{mainthm}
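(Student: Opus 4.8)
The plan is to build the section by \emph{averaging the pullbacks of a basepoint along long orbit segments}, exploiting that a Busemann space carries a canonical barycenter which contracts distances. Fix a basepoint $p_0 \in H$ and, for each $N \in \Z_+$, write $q_k(\omega) := A^{(k)}(\omega)^{-1} p_0$ and define
\[
\phi_N(\omega) := \bary\big(q_0(\omega), q_1(\omega), \dots, q_{N-1}(\omega)\big).
\]
Here $\bary$ is the canonical barycenter of a finite family of points, which I take to be \emph{symmetric} (invariant under permutations of its arguments), \emph{equivariant} under isometries, and \emph{contracting} in the sense that replacing one of the $N$ points by another moves the barycenter by at most $\tfrac1N$ times the distance between them. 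These are exactly the properties of the canonical barycenter in nonpositive curvature (à la Es-Sahib--Heinich, or Sturm in the CAT(0) case).

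The computation is driven by the cocycle relation. From \eqref{e.cocycle} with $n=1$, $m=k$ one gets $A^{(k+1)}(\omega) = A^{(k)}(F\omega)\,A(\omega)$, hence $A^{(k)}(F\omega)^{-1} p_0 = A(\omega)\,q_{k+1}(\omega)$. Using equivariance of $\bary$ under the isometry $A(\omega)$,
\[
\phi_N(F\omega) = A(\omega)\,\bary\big(q_1(\omega), \dots, q_N(\omega)\big),
\qquad
A(\omega)\,\phi_N(\omega) = A(\omega)\,\bary\big(q_0(\omega), \dots, q_{N-1}(\omega)\big).
\]
Since $A(\omega)$ is an isometry, $\dist\big(A(\omega)\phi_N(\omega),\, \phi_N(F\omega)\big)$ equals the distance between the two inner barycenters. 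The families $\{q_0, \dots, q_{N-1}\}$ and $\{q_1, \dots, q_N\}$ share the $N-1$ points $q_1, \dots, q_{N-1}$ and differ only in $q_0$ versus $q_N$; by symmetry and the contraction property this distance is at most $\tfrac1N \dist\big(q_0(\omega), q_N(\omega)\big) = \tfrac1N \dist\big(p_0, A^{(N)}(\omega)^{-1} p_0\big) = \tfrac1N \dist\big(A^{(N)}(\omega) p_0, p_0\big)$.

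Taking the supremum over $\omega$ gives $\displ(\phi_N) \le \tfrac1N \sup_{\omega} \dist\big(A^{(N)}(\omega) p_0, p_0\big)$. By the subadditivity underlying the definition \eqref{e.drift_D}, the right-hand side tends to $\drift(F,A)$ as $N \to \infty$, so choosing $N$ large enough yields $\displ(\phi_N) \le \drift(F,A) + \epsilon$. Continuity of $\phi_N$ follows because $\omega \mapsto q_k(\omega)$ is continuous (evaluation and inversion are continuous in the bounded--open topology) and the barycenter of a fixed number of points is continuous — indeed Lipschitz — by the same contraction property.

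The main obstacle is purely geometric and lives entirely in the barycenter: I must guarantee that a Busemann space, not merely a CAT(0) space, admits a finite barycenter map that is simultaneously symmetric, isometry-equivariant, and $\tfrac1N$-contracting under a single-point change. In the CAT(0) setting this is Sturm's contraction inequality for the Fréchet mean; but in a Busemann space the squared distance need not be uniformly convex, so one must instead use the canonical barycenter obtained from iterated symmetrized midpoints, and establishing its contraction estimate is the technical crux on which the entire argument rests.
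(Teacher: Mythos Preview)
Your proposal is correct and is essentially identical to the paper's first proof: the same section $\phi_N(\omega) = \bary\big(A^{(k)}(\omega)^{-1} p_0 : 0 \le k < N\big)$, the same use of equivariance and the single-point contraction estimate \eqref{e.bar_lips_D} for the barycenter of \cref{t.bary}, and the same final inequality $\displ(\phi_N) \le \tfrac1N \sup_\omega d(A^{(N)}(\omega)p_0, p_0)$. Your identification of the crux---that a Busemann (not just CAT(0)) space carries such a barycenter via the Es-Sahib--Heinich/Navas construction---also matches the paper's discussion; note that the paper additionally records a second, midpoint-only proof (iterated bisection from \cite{BGS}) that avoids general barycenters.
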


Together with \eqref{e.mindispl_D}, this theorem implies that the maximal 
drift is the infimum of the displacements of the continuous sections $\phi$. 

\bigskip

For the next result, we need extra assumptions on the space $H$,
the most important being that $H$ is a symmetric space (see \cref{sss.symm}).


\begin{mainthm}[Creating invariant sections; discrete time]\label{t.closing_D}
Assume that $H$ is 
\begin{itemize}
\item either a proper\footnote{A metric space is 
called \emph{proper} if bounded closed sets are compact.} Busemann space;
\item or a space of bounded nonpositive curvature in the sense of 
Alexandrov\footnote{See \cref{sss.alexandrov} for the definitions.}.
\end{itemize}
Also assume that $H$ is symmetric. 
Let $A$ be a cocycle of isometries of $H$ with uniform 
sublinear drift.
Then there exists a cocycle $\tilde A$ arbitrarily close to $A$,
that has a continuous invariant section, and so is cohomologous to a cocycle taking values in the stabilizer in $\Isom (H)$ of a point $p_0 \in H$.
\end{mainthm}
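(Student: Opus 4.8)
The plan is to feed the almost invariant sections produced by \cref{t.slow_D} into a correction by transvections, exploiting the homogeneity of the symmetric space $H$. Since $\drift(F,A)=0$, \cref{t.slow_D} furnishes, for every $\epsilon>0$, a continuous section $\phi=\phi_\epsilon\!:\Omega\to H$ with $\displ(\phi)\le\epsilon$, so that the points $p_\omega:=A(\omega)\phi(\omega)$ and $q_\omega:=\phi(F\omega)$ satisfy $\dist(p_\omega,q_\omega)\le\epsilon$ for all $\omega$. The idea is to nudge $A$ so that these two nearby points are made to coincide. Because $H$ is symmetric, each ordered pair $(p,q)$ determines a canonical transvection $\tau_{p,q}\in\Isom(H)$ along the geodesic from $p$ to $q$ (for instance $\tau_{p,q}=s_m\circ s_p$, the composition of the geodesic symmetries at $p$ and at the midpoint $m$ of $p$ and $q$), which satisfies $\tau_{p,q}(p)=q$, depends continuously on $(p,q)$ (continuity of $p\mapsto s_p$ and of the midpoint map), and equals $\id$ when $p=q$. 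I then set $\tilde A(\omega):=\tau_{p_\omega,q_\omega}\circ A(\omega)$. By construction $\phi$ is an invariant section of $\tilde A$, since $\tilde A(\omega)\phi(\omega)=\tau_{p_\omega,q_\omega}(p_\omega)=q_\omega=\phi(F\omega)$; and $\tilde A$ is continuous because $A$, $\phi$, $F$, the evaluation action, and $(p,q)\mapsto\tau_{p,q}$ all are.

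The crux is to show that $\tilde A$ is close to $A$ in the bounded--open topology, uniformly in $\omega$. Fix a bounded set $K\subset H$ and $x\in K$. Since $A(\omega)$ is an isometry,
\[
\dist\big(A(\omega)x,p_\omega\big)=\dist\big(x,\phi(\omega)\big)\le D,
\qquad D:=\sup_{x\in K,\,\omega}\dist\big(x,\phi(\omega)\big)<\infty,
\]
the finiteness holding because $\phi(\Omega)$ is compact. Thus the point $A(\omega)x$ to which the correction $\tau_{p_\omega,q_\omega}$ is applied lies within the fixed distance $D$ of the basepoint $p_\omega$ of the transvection's axis, while the translation length is $\dist(p_\omega,q_\omega)\le\epsilon$. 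By homogeneity of the symmetric space, the quantity
\[
\sup\big\{\,\dist(\tau_{p,q}\,y,y):\ \dist(y,p)\le D,\ \dist(p,q)\le\epsilon\,\big\}
\]
is independent of the location of $p$, and by continuity of $\tau$ together with $\tau_{p,p}=\id$ it tends to $0$ as $\epsilon\to0$ with $D$ fixed. Hence $\sup_\omega\sup_{x\in K}\dist\big(\tilde A(\omega)x,A(\omega)x\big)\to0$, i.e.\ $\tilde A\to A$.

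I expect this last estimate to be the main obstacle. A transvection of small length is \emph{not} uniformly close to the identity on all of $H$ — in negative curvature it dilates distances far from its axis — so one genuinely needs both the fact that the relevant points stay in a bounded neighborhood of the axis and a compactness argument to upgrade pointwise smallness to uniform smallness on the ball of radius $D$. This is exactly where the hypotheses on $H$ intervene: properness (or the Alexandrov bounded--curvature assumption) makes such balls compact, yielding the required uniform continuity of $(p,q,y)\mapsto\dist(\tau_{p,q}\,y,y)$ near the diagonal $p=q$. One must also ensure that $D$ stays controlled as $\epsilon\to0$, i.e.\ that the almost invariant sections can be taken in a suitably bounded region; this should be arranged from the construction underlying \cref{t.slow_D}.

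Finally, the cohomology statement is immediate. Setting $U(\omega):=\tau_{p_0,\phi(\omega)}$ defines a continuous map $U\!:\Omega\to\Isom(H)$ with $U(\omega)p_0=\phi(\omega)$, and then $B(\omega):=U(F\omega)^{-1}\tilde A(\omega)U(\omega)$ fixes $p_0$, since $B(\omega)p_0=U(F\omega)^{-1}\tilde A(\omega)\phi(\omega)=U(F\omega)^{-1}\phi(F\omega)=p_0$. Thus $B$ takes values in $\mathrm{Stab}_{\Isom(H)}(p_0)$ and $\tilde A$ is cohomologous to $B$, as required.
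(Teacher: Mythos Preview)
Your overall strategy matches the paper's: apply \cref{t.slow_D} to obtain almost invariant sections $\phi_\epsilon$, then post-compose $A(\omega)$ with a transvection carrying $A(\omega)\phi_\epsilon(\omega)$ to $\phi_\epsilon(F\omega)$. The cohomology conclusion is also handled the same way. The difference---and the gap---lies in the choice of transvection.

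You take $\tau_{p,q}=s_m\circ s_p$, whose axis is the geodesic through $p$ and $q$. To control $\dist(\tau_{p_\omega,q_\omega}\,A(\omega)x,\,A(\omega)x)$ you need a bound on the distance from $A(\omega)x$ to this axis, and you obtain $\dist(A(\omega)x,p_\omega)=\dist(x,\phi_\epsilon(\omega))\le D_\epsilon$. You then invoke a displacement estimate of the form $f(\epsilon,D_\epsilon)$. The problem is that $D_\epsilon$ is \emph{not} uniformly bounded as $\epsilon\to0$: the sections $\phi_N$ produced by \cref{t.slow_D} (see \eqref{e.magic2}) are barycenters of the points $[A^{(k)}(\omega)]^{-1}p_0$, $0\le k<N$, and when $\drift(F,A)=0$ these points are at distance $o(N)$ from $p_0$ but generally unbounded. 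So $\phi_N(\Omega)$ may wander off to infinity, $D_\epsilon\to\infty$, and neither the properness argument nor the Alexandrov estimate gives $f(\epsilon,D_\epsilon)\to0$ without further control. You flag this yourself, but the fix you suggest---arranging the sections to stay bounded---cannot be extracted from \cref{t.slow_D} in general.

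The paper sidesteps this entirely by choosing a different transvection. Fixing once and for all a basepoint $p_0\in H$, it sets
\[
J(p,q):=\sigma_m\circ\sigma_{p_0},\qquad m:=\mid\big(\sigma_{p_0}(p),\,q\big),
\]
so that $J(p,q)p=q$ and, by the Busemann inequality, the translation length is $2\dist(p_0,m)\le\dist(p,q)$. The crucial feature is that the axis of $J(p,q)$ always passes through the \emph{fixed} point $p_0$, regardless of where $p$ and $q$ are. Hence for $r$ in a fixed bounded set $B$ one has $\dist(r,\text{axis})\le\dist(r,p_0)$, a bound independent of $p,q$ and of $\epsilon$. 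The displacement estimate then gives $J(p,q)\to\id$ in the bounded--open topology as $\dist(p,q)\to0$, with \emph{no} restriction on the location of $p,q$ (this is \cref{l.homog_macro}). Since $\{A(\omega)x:\omega\in\Omega,\,x\in K\}$ is bounded (continuity of $A$, compactness of $\Omega$), the convergence $\tilde A_N\to A$ follows directly. Replacing your $\tau_{p,q}$ by this $J(p,q)$ closes the gap.
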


Here, the approximation is meant in the following sense:
There is a sequence of cocycles $\tilde A_N$ satisfying the conclusions of the theorem,
and such that for every bounded set $B$ of $H$, the sequence $\tilde A_N(\omega) p$
converges uniformly to $A(\omega) p$ uniformly with respect to $(\omega,p) \in \Omega \times B$.

\medskip

To prove \cref{t.slow_D}, we explicitly construct sections~$\phi$ that almost realize the drift. The main construction uses an appropriate concept of barycenter. This construction is suitable for extensions to flows and nilpotent group actions, as we will see. We also give an alternative argument (based on the referee's comments); however, this argument seems to be less suitable for generalizations. 

The proof of \cref{t.closing_D} is also explicit: 
we use the symmetries of $H$ to construct the required perturbation.
Some care is needed to ensure that the perturbation is small.
While the task is easy in the locally compact case, the general case 
requires finer geometric arguments, making full use of the assumptions on curvature.
In any case, what we ultimately show is that the space $H$ has a certain \emph{uniform homogeneity}
property, which may be of independent interest.

\medskip
 
For another interpretation of \cref{t.closing_D}, see \cref{r.drift_as_TL}.
For some extensions of the results above, see \cref{r.family,r.non-perturbative,r.access,r.fibered_D}.

\subsection{Examples and applications} \label{ss.examples}

The simplest space $H$ to which our results apply is the real line.
If we restrict ourselves to orientation-preserving isometries,
then \cref{t.slow_D,t.closing_D} become results on $\R$-valued cocycles.
They appear repeatedly in the literature; 
see \cite{MOP1}, \cite[Prop.~2.13]{Kat}, \cite[Prop.~6]{CNP2}. 
We call this the \emph{classical} setting.

\medskip

Another natural situation is when $H$ is the euclidean space $\R^n$. 
For an interesting class of examples of cocycles of isometries of $\R^2$ that have 
uniform sublinear drift, see \cite[\S~2.3]{CNP2}.

\medskip

Other situations where our results can be applied
are naturally related to \emph{matrix cocycles}, as we now explain.

We say that a matrix cocycle $A \!: \Omega \to \GL(d,\R)$
has \emph{uniform subexponential growth} if 
$$
\lim_{n \to +\infty} \frac{1}{n} 
\log \left\| (A^{(n)}(\omega))^{\pm 1} \right\| = 0
\quad \text{uniformly over $\omega \in \Omega$,}
$$
for some (and hence any) matrix norm $\|\mathord{\cdot} \|$.

We will show the following:

\begin{theorem}\label{t.matrix}
Let $G$ be an algebraic subgroup of $\GL(d,\R)$ that is closed under matrix transposition,
and let $K$ be its intersection with the orthogonal group $\mathord{O}(n)$.
Let $A \!: \Omega \to G$ be a cocycle with uniform subexponential growth. 
Then there exists a cocycle $\tilde A \!: \Omega \to G$
arbitrarily close to $A$ that is cohomologous 
to a cocycle taking values in $K$.
\end{theorem}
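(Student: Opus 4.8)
The plan is to realize the matrix cocycle as a cocycle of isometries of a symmetric space attached to $G$, and then invoke \cref{t.closing_D}. Let $G$ act on the space $\Pos(d)$ of positive-definite symmetric $d\times d$ matrices by $g\cdot P := g P g^{\mathsf{T}}$, and equip $\Pos(d)$ with its standard symmetric-space metric, for which $\dist(I,P)=\|\log P\|_F$ and the action is isometric with $\mathrm{Stab}_{\GL(d,\R)}(I)=\mathord{O}(d)$, so that $\mathrm{Stab}_G(I)=G\cap\mathord{O}(d)=K$. Because $G$ is Zariski-closed and closed under transpose it is self-adjoint, hence reductive, and admits a Cartan decomposition $G=K\exp(\mathfrak p)$, where $\mathfrak g=\mathfrak k\oplus\mathfrak p$ splits the Lie algebra into its antisymmetric and symmetric parts. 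Consequently the orbit $H:=G\cdot I$ is a totally geodesic submanifold of $\Pos(d)$, isometric to the Riemannian symmetric space $G/K$, on which $G$ acts transitively by isometries with point stabilizer $K$. As a finite-dimensional Hadamard manifold, $H$ is complete, CAT(0), of bounded nonpositive curvature, proper, and symmetric, so it satisfies both alternatives in the hypotheses of \cref{t.closing_D}.

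First I would check that uniform subexponential growth of $A$ is precisely uniform sublinear drift of $A$, viewed as a cocycle of isometries of $H$ based at $p_0=I$. Since the eigenvalues of $A^{(n)}(\omega)\,A^{(n)}(\omega)^{\mathsf{T}}$ are the squared singular values $\sigma_i(A^{(n)}(\omega))^2$, one has
$$
\dist\big(A^{(n)}(\omega)\cdot I,\, I\big)
= \Big(\textstyle\sum_i \big(2\log\sigma_i(A^{(n)}(\omega))\big)^2\Big)^{1/2}
\le 2\sqrt{d}\,\max\big(\log\|A^{(n)}(\omega)\|,\ \log\|A^{(n)}(\omega)^{-1}\|\big).
$$
By hypothesis the right-hand side is $o(n)$ uniformly in $\omega$, whence $\drift(F,A)=0$.

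Then I would apply \cref{t.closing_D} to obtain a cocycle $\tilde A$ of isometries of $H$ arbitrarily close to $A$ that possesses a continuous invariant section $\phi\colon\Omega\to H$. The crux — and the step I expect to be the main obstacle — is to guarantee that $\tilde A$ takes values in $G$, and not merely in $\Isom(H)$. For this I would revisit the construction behind \cref{t.closing_D} and observe that the perturbation is assembled from geodesic symmetries of $H$, whose relevant compositions are transvections moving points small distances along geodesics. Since the geodesics of $H=G\cdot I$ issuing from a point $gg^{\mathsf{T}}$ are the curves $t\mapsto g\exp(2tX)g^{\mathsf{T}}$ with $X\in\mathfrak p$, every such transvection is realized by an element of $G$ of the form $g\exp(sX)g^{-1}$; hence all transvections of $H$ lie in the image of $G$ in $\Isom(H)$. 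Thus the perturbing isometries can be kept inside $G$, and the construction lifts to a cocycle $\tilde A\colon\Omega\to G$ satisfying the same closeness (uniform on bounded subsets of $H$). Equivalently, one proves a relative version of \cref{t.closing_D} valid for any closed transitive subgroup of $\Isom(H)$ that contains all transvections, and applies it to the image of $G$.

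Finally I would promote the invariant section to a conjugacy inside $G$. Since $H$ is contractible, the principal $K$-bundle $G\to G/K=H$ is trivial and admits a continuous global section $\sigma\colon H\to G$; setting $U:=\sigma\circ\phi\colon\Omega\to G$ gives a continuous map with $U(\omega)\cdot I=\phi(\omega)$. Conjugating, the cocycle $B(\omega):=U(F\omega)^{-1}\,\tilde A(\omega)\,U(\omega)$ satisfies
$$
B(\omega)\cdot I = U(F\omega)^{-1}\,\tilde A(\omega)\,\phi(\omega) = U(F\omega)^{-1}\,\phi(F\omega) = I,
$$
so $B(\omega)\in K$ for every $\omega$. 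Therefore $\tilde A(\omega)=U(F\omega)\,B(\omega)\,U(\omega)^{-1}$ is cohomologous within $G$ to the $K$-valued cocycle $B$, which is exactly the assertion of \cref{t.matrix}.
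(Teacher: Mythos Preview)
Your proof is correct and follows essentially the same approach as the paper: identify $H=G/K$ as a symmetric Hadamard space, translate uniform subexponential growth into uniform sublinear drift via the singular-value distance formula, apply \cref{t.closing_D}, and then observe that the perturbing isometries in that construction are transvections and hence lie in the image of $G$. Your final conjugacy step invokes a global section of the principal bundle $G\to G/K$ (via contractibility of $H$), whereas the paper uses the transvection map $J(p_0,\cdot)$ from \cref{l.homog_macro} to produce $U$; both yield the desired $K$-valued cocycle.
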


For an elementary proof of this result in the case $G = \GL(d,\R)$
(assuming $F$ invertible), and for further applications, see the companion paper \cite{BN_elementary}.

Other examples of groups $G$ where \cref{t.matrix}
applies are the complex general linear group $\GL(n,\C)$ (embedded in $\GL(2n,\R)$ in the usual way)
and the symplectic group $\mathrm{Sp}(2n)$; in both examples, $K$ is the unitary group $\mathrm{U}(n)$.

\Cref{t.matrix} essentially follows from \cref{t.closing_D}
applied to the space $H:=G/K$.
We can also obtain similar results for infinite-dimensional Lie groups.
See \cref{ss.matrix} for details.

\subsection{Continuous time versions}\label{ss.general_C}

Now we assume that $H$ is a Cartan--Hadamard manifold, that is, a complete simply 
connected Riemannian manifold (possibly of infinite dimension, modeled on a 
Hilbert space) of nonpositive sectional curvature. (See \cref{sss.killing} 
for details.)

A \emph{semiflow} $\{F^t\}$ on $\Omega$ is a continuous-time dynamical system,
that is, a continuous map $(\omega,t) \in \Omega \times \R_+ \mapsto F^t \omega \in \Omega$
such that
$F^0 =\id$ and $F^{t+s}= F^t \circ F^s$ for all $s,t$ in $\R$.

A \emph{cocycle of isometries} (of $H$) over $\{F^t \}$ is a 1-parameter 
family of maps $A^{(t)} \!: \Omega \to \Isom(H)$ (where $t\in \R_+$) satisfying 
\begin{equation}\label{e.cocycle_C}
A^{(0)}(\omega) = \id, \quad A^{(s+t)}(\omega) = A^{(s)}(F^t \omega) A^{(t)}(\omega)
\end{equation}
and such that 
\begin{equation}\label{e.regularity}
\text{$A^{(t)}(\omega) p$ is continuous w.r.t.\ $(t, \omega, p)$ 
and continuously differentiable w.r.t.\ $t$.}
\end{equation} 

Given the cocycle $\{ A^{(t)} \}$, we can associate the vector field $\mathfrak{a}(\omega)$ on $H$ defined by  
\begin{equation}\label{e.generator}
\mathfrak{a} (\omega)(p) = \left. \frac{\partial}{\partial t} A^{(t)}(\omega) p \right|_{t=0} \, .
\end{equation}
This defines a continuous map $\mathfrak{a} \!: \Omega \to \Kill(H)$,
where $\Kill(H)$ is the set of Killing fields on $H$. 
(See \cref{sss.killing} for the topology on $\Kill(H)$.) 
Conversely, given any continuous map $\mathfrak{a} \!: \Omega \to \Kill(H)$,
there is a unique cocycle of isometries $(F^t, A^{(t)})$ 
satisfying the ODE 
\begin{equation}\label{e.ODE}
\frac{\partial}{\partial t} A^{(t)}(\omega) p 
= \mathfrak{a} (F^t \omega) (A^{(t)}(\omega) p) \, .
\end{equation}
The map $\mathfrak{a}$ is called the \emph{infinitesimal generator} of the cocycle.

We define the \emph{maximal drift} of a continuous-time cocycle 
of isometries in the same way as the discrete-time situation: 
\begin{equation}\label{e.drift_C}
\drift(F,A)  := \lim_{t\to \infty} \frac{1}{t} \sup_{\omega \in \Omega} 
\dist \big( A^{(t)}(\omega) p_0, p_0 \big), \quad \text{$p_0 \in H$ arbitrary.}
\end{equation}
Again, we say that the cocycle has \emph{uniform sublinear drift} if $\drift(F,A) = 0$.

A (continuous) section $\phi \!: \Omega \to H$ is said to be 
\emph{differentiable with respect to the semiflow}\footnote{A 
similar definition appears in \cite{Schw}.} if
for every $\omega \in \Omega$, the derivative 
\begin{equation}\label{e.d_wrt_semiflow}
\phi'(\omega) := \left. \frac{\partial}{\partial t} \phi(F^t \omega) \right|_{t=0}
\end{equation}
exists and defines a continuous map $\phi'  \!: \Omega \to TH$. 
Is $\phi$ is such a section then its \emph{speed} with respect to $\{ A^t \}$ is defined by:
\begin{equation}\label{e.speed}
\speed(\phi) := 
\sup_{\omega \in \Omega} \big\| \mathfrak{a} (\omega)(\phi(\omega)) - \phi'(\omega) \big\| \, ,
\end{equation}
where $\mathfrak{a}$ is the infinitesimal generator of the cocycle.
(A more precise notation is $\speed_{F,\mathfrak{a}}(\phi)$.)
The speed is the continuous-time analogue of the displacement \eqref{e.displacement}.
Notice that $\speed(\phi) = 0$ holds if and only if the section $\phi$ is 
\emph{invariant}, that is, $A^{(t)}(\omega) \phi(\omega) = \phi(F^t \omega)$.

Analogously to \eqref{e.mindispl_D}, we have
\begin{equation}\label{e.minidispl_C}
\speed(\phi) \ge \drift(F,A).
\end{equation}

The continuous-time versions of \cref{t.slow_D,t.closing_D} are given below:

\begin{mainthm}[Existence of sections of nearly minimal speed; continuous time]
\label{t.slow_C}
Assume that $H$ is a Cartan--Hadamard manifold.
Given a continuous-time cocycle of isometries $\{ A^{(t)} \}$ of $H$ over a semiflow 
$\{F^t\}$ on $\Omega$, 
for each $\epsilon>0$ there exists a continuous section $\phi \!:\Omega \to H$
that is differentiable with respect to the semiflow $\{F^t\}$ and 
such that $\speed(\phi) \le \drift(F,A) + \epsilon$.
\end{mainthm}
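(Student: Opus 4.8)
\textbf{Proof proposal for \cref{t.slow_C}.}

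The plan is to deduce the continuous-time result from the discrete-time \cref{t.slow_D} by a time-discretization followed by a smoothing procedure. First I would fix a time step $\tau > 0$ and apply \cref{t.slow_D} to the time-$\tau$ map: consider the discrete dynamics $F := F^\tau$ together with the cocycle $A(\omega) := A^{(\tau)}(\omega)$. Since the maximal drift of this discrete cocycle equals $\tau \cdot \drift(F,A)$ (as one checks directly from the definitions \eqref{e.drift_D} and \eqref{e.drift_C} by restricting the limit to integer multiples of $\tau$, using subadditivity to control the intermediate times), \cref{t.slow_D} produces a continuous section $\psi \!: \Omega \to H$ with
\begin{equation*}
\sup_{\omega \in \Omega} \dist\big( A^{(\tau)}(\omega) \psi(\omega), \psi(F^\tau \omega) \big) \le \tau \, \drift(F,A) + \tau \epsilon_1
\end{equation*}
for any prescribed $\epsilon_1 > 0$. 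The difficulty is that such a $\psi$ is only controlled at the single time scale $\tau$ and need not be differentiable along the semiflow, so it is not yet a legitimate competitor for the speed functional \eqref{e.speed}.

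To repair this, I would replace $\psi$ by a section obtained through an averaging over the flow direction. Using the barycenter construction advertised in the introduction (which, on a Cartan--Hadamard manifold, assigns to a compactly supported measure a canonical center of mass depending continuously and in fact smoothly on the data), I would define
\begin{equation*}
\phi(\omega) := \bary \left( \frac{1}{\tau} \int_0^\tau \big( A^{(s)}(\omega) \big)^{-1} \, \psi(F^s \omega) \, ds \right),
\end{equation*}
i.e.\ I average the ``pulled-back'' positions of $\psi$ along an orbit segment of length $\tau$. The point of pulling back by $(A^{(s)}(\omega))^{-1}$ is precisely to make the resulting section differentiable with respect to the semiflow: by the cocycle relation \eqref{e.cocycle_C} and the ODE \eqref{e.ODE}, differentiating $\phi(F^t\omega)$ in $t$ and comparing with the generator $\mathfrak a$ turns into a telescoping/boundary-term computation, so that $\mathfrak a(\omega)(\phi(\omega)) - \phi'(\omega)$ is expressed through the discrete displacement of $\psi$ over a single step of length $\tau$ together with a $1/\tau$ averaging factor. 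I expect the speed of $\phi$ to be bounded by roughly $\drift(F,A) + \epsilon_1 + O(\tau)$, where the $O(\tau)$ error absorbs the curvature-dependent distortion introduced by the nonlinearity of the barycenter and by the noncommutativity of the isometries over the time interval $[0,\tau]$.

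The main obstacle, and where most of the work will lie, is controlling this $O(\tau)$ error uniformly in $\omega$. I would exploit nonpositive curvature through the convexity of the distance function: the barycenter is a $1$-Lipschitz-type contraction of configurations, so averaging does not increase distances, and the Killing field $\mathfrak a(\omega)$ varies continuously and boundedly over the compact base $\Omega$, giving a uniform modulus of continuity for the flow maps $A^{(s)}$ on each bounded set. Quantifying how much the infinitesimal generator at $\phi(\omega)$ deviates from the time-averaged velocity of the pulled-back section requires a first-order Taylor estimate for $A^{(s)}(\omega)$ in $s$, whose error is controlled by the supremum of $\|\mathfrak a\|$ and its variation over the orbit segment; these are finite by continuity of $\mathfrak a$ and compactness of $\Omega$. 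Once the error is shown to be $O(\tau)$ uniformly, I would choose first $\epsilon_1 := \epsilon/2$ and then $\tau$ small enough that the curvature term is below $\epsilon/2$, yielding $\speed(\phi) \le \drift(F,A) + \epsilon$ and completing the proof.
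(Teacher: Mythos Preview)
Your averaging idea is right and close to the paper's, but the two-step plan (first \cref{t.slow_D}, then smooth) is a detour, and two aspects of your analysis need correction.

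First, there is no ``$O(\tau)$ curvature distortion''. If you carry out the telescoping you describe, equivariance of $\bary$ under isometries together with its $1$-Lipschitz dependence on the measure (property \eqref{e.bar_lips_C}) gives the clean bound
\[
\speed(\phi) \le \tfrac{1}{\tau}\sup_\omega \dist\big(A^{(\tau)}(\omega)\psi(\omega),\,\psi(F^\tau\omega)\big) \le \drift(F,A) + \epsilon_1,
\]
with no extra error term and no need to send $\tau\to 0$. In fact the appeal to \cref{t.slow_D} buys nothing: the paper simply takes the constant section $\psi\equiv p_0$ and $\tau = T$ \emph{large}, defining $\phi_T(\omega) := \bary\big(\tfrac{1}{T}\int_0^T \delta([A^{(t)}(\omega)]^{-1}p_0)\,\mathrm{d}t\big)$, whence the bound above reads $\speed(\phi_T) \le \tfrac{1}{T}\sup_\omega \dist(A^{(T)}(\omega)p_0,p_0) \to \drift(F,A)$ as $T\to\infty$.

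Second, your differentiability claim has a gap. The paper's \cref{l.diffbar} (differentiability of the Cartan barycenter) requires the integrand $h(t,s)$ to be $C^1$ in $t$; with $\psi\equiv p_0$ this follows from \eqref{e.regularity} and the cocycle relation, but with your merely continuous $\psi$ it does not. You can still recover $C^1$-dependence by noting that, after pulling back by $A^{(t)}(\omega)$, your section becomes the barycenter of a sliding window $\tfrac{1}{\tau}\int_t^{t+\tau}\delta_{g(u)}\,\mathrm{d}u$ for a fixed continuous curve $g$, and rerunning the implicit-function argument behind \cref{l.diffbar} with the $t$-dependence sitting in the limits of integration (the resulting boundary terms are continuous). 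But this is a different lemma, and ``the barycenter depends smoothly on the data'' does not cover it; the simplest fix is to drop the intermediate $\psi$ and work with $p_0$ directly, which is exactly what the paper does.
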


\medskip

\begin{mainthm}[Creating invariant sections; continuous time]\label{t.closing_C}
In the context of the preceding theorem, assume moreover 
that $H$ is a symmetric space
and that the cocycle $\{A^{(t)}\}$ has uniform sublinear drift. 
Let  $\mathfrak{a}$ be the infinitesimal generator of  $\{A^{(t)}\}$.
Then there exists $\tilde{\mathfrak{a}} \!: \Omega \to \Kill(H)$,
arbitrarily close to $\mathfrak{a}$ such that the 
associated cocycle $\{ \tilde A^{(t)} \}$ has an invariant continuous section 
(and hence is cohomologous to a cocycle taking values in the stabilizer 
of a point $p_0 \in H$).
\end{mainthm}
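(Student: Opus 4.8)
The plan is to mirror the strategy of the discrete-time \cref{t.closing_D}: first produce an almost invariant section using \cref{t.slow_C}, and then correct the infinitesimal generator by a small Killing field so that this section becomes exactly invariant. Since uniform sublinear drift means $\drift(F,A)=0$, \cref{t.slow_C} provides, for each $\epsilon>0$, a continuous section $\phi\!:\Omega\to H$, differentiable with respect to the semiflow, with $\speed(\phi)\le\epsilon$. By \eqref{e.speed}, the \emph{defect}
\[
\delta(\omega) := \phi'(\omega)-\mathfrak{a}(\omega)(\phi(\omega)) \in T_{\phi(\omega)}H
\]
satisfies $\|\delta(\omega)\|\le\epsilon$ for all $\omega$. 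I would then seek a correction of the form $\tilde{\mathfrak{a}}=\mathfrak{a}+\mathfrak{b}$, where $\mathfrak{b}\!:\Omega\to\Kill(H)$ is continuous with $\mathfrak{b}(\omega)(\phi(\omega))=\delta(\omega)$, so that $\tilde{\mathfrak{a}}(\omega)(\phi(\omega))=\phi'(\omega)$ for every $\omega$.

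The reason this infinitesimal matching suffices is a uniqueness argument for the flow \eqref{e.ODE}. Fix $\omega$ and compare the two curves $t\mapsto \tilde A^{(t)}(\omega)\phi(\omega)$ and $t\mapsto\phi(F^t\omega)$. The first solves $\dot\gamma(t)=\tilde{\mathfrak{a}}(F^t\omega)(\gamma(t))$ by \eqref{e.ODE}. For the second, the semiflow property together with \eqref{e.d_wrt_semiflow} gives $\tfrac{d}{dt}\phi(F^t\omega)=\phi'(F^t\omega)$, which by our choice of $\tilde{\mathfrak{a}}$ (applied at the point $F^t\omega$) equals $\tilde{\mathfrak{a}}(F^t\omega)(\phi(F^t\omega))$; so the second curve solves the same ODE. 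As both start at $\phi(\omega)$, uniqueness of solutions forces $\tilde A^{(t)}(\omega)\phi(\omega)=\phi(F^t\omega)$, i.e.\ $\phi$ is invariant for $\{\tilde A^{(t)}\}$.

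To build $\mathfrak{b}$ I would use the symmetric structure of $H$ (\cref{sss.symm}). At each $p\in H$ and $v\in T_pH$ there is a canonical infinitesimal transvection $\xi_{p,v}\in\Kill(H)$, the element of the $(-1)$-eigenspace of the differential of the geodesic symmetry at $p$ corresponding to $v$; it is linear in $v$, depends continuously on $(p,v)$, and satisfies $\xi_{p,v}(p)=v$. Setting $\mathfrak{b}(\omega):=\xi_{\phi(\omega),\delta(\omega)}$ yields a continuous map with the required value along $\phi$, and $\tilde{\mathfrak{a}}=\mathfrak{a}+\mathfrak{b}$ is again a continuous map into $\Kill(H)$, hence generates a cocycle $\{\tilde A^{(t)}\}$ with $\phi$ as invariant section. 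It then remains only to check that $\tilde{\mathfrak{a}}$ is close to $\mathfrak{a}$, i.e.\ that $\mathfrak{b}\to 0$ in the topology of $\Kill(H)$ as $\epsilon\to0$.

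This last estimate is where the main difficulty lies, and it is the continuous-time incarnation of the \emph{uniform homogeneity} of $H$. The value $\|\xi_{p,v}(q)\|$ is bounded by $\|v\|$ times a factor that grows with $\dist(p,q)$, so on a fixed bounded set $B$ one gets $\sup_{q\in B}\|\mathfrak{b}(\omega)(q)\|\le C\,\epsilon$; the point is that $C$ can be taken independent of $\omega$ because the base points $\phi(\omega)$ range over the compact set $\phi(\Omega)$, and —crucially— independent of $\epsilon$ only if the sections remain in a fixed bounded region as $\epsilon\to0$. Controlling this growth uniformly (and likewise for the covariant derivative, if it enters the topology on $\Kill(H)$) is the delicate step, and it is tractable here precisely because in the smooth symmetric setting the transvections $\xi_{p,v}$ are explicit, in contrast to the discrete case where one must manufacture genuine isometries near the identity. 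Granting the estimate, one takes $\epsilon\to0$ to obtain the desired sequence of generators converging to $\mathfrak{a}$; finally, choosing $U(\omega)$ to be the transvection carrying a fixed $p_0$ to $\phi(\omega)$ (which is continuous in $\omega$), the conjugated cocycle $U(F^t\,\cdot)^{-1}\tilde A^{(t)}(\cdot)\,U(\cdot)$ fixes $p_0$ and so takes values in the stabilizer of $p_0$, completing the proof.
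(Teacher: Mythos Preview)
Your overall strategy matches the paper's: use \cref{t.slow_C} to get a section $\phi$ with small speed, then add a Killing correction $\mathfrak{b}(\omega)$ realizing the defect $\delta(\omega)$ at $\phi(\omega)$. The ODE uniqueness argument for invariance is fine, as is the conjugacy at the end.

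The gap is exactly where you flag it, and you do not close it. With your choice $\mathfrak{b}(\omega)=\xi_{\phi(\omega),\delta(\omega)}\in\fm_{\phi(\omega)}$ the available estimate is $\|\mathfrak{b}(\omega)(q)\|\le f\big(d(\phi(\omega),q)\big)\,\epsilon$, whose constant on a bounded set $B$ depends on where $\phi(\omega)$ lies. You observe correctly that the bound is uniform in $\epsilon$ only if the sections $\phi_\epsilon(\Omega)$ stay in a fixed bounded region as $\epsilon\to0$, but you give no reason for this and in general there is none: even with zero drift the sections $\phi_T$ of \eqref{e.magic3} can drift to infinity as $T\to\infty$. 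Saying the step is ``tractable because the transvections are explicit'' and then ``granting the estimate'' skips the actual content.

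The paper's remedy is not to bound the sections, but to take the Killing correction from $\fm_{p_0}$ for a \emph{fixed} basepoint $p_0$ rather than from $\fm_{\phi(\omega)}$. Concretely (\cref{l.homog_inf}), for $w\in T_pH$ one lets $K_w:=\xi_{v_0}$ where $v_0\in T_{p_0}H$ is the unique vector with $\xi_{v_0}(p)=w$. Nonpositive curvature (part~(I) of \cref{l.displ_estimate_C}) gives $\|v_0\|\le\|w\|$, and then part~(II) gives $\|K_w(q)\|\le f\big(d(q,p_0)\big)\,\|w\|$, with a factor depending only on $d(q,p_0)$ and \emph{not} on $p$. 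Taking $\mathfrak{b}(\omega):=K_{\delta(\omega)}$ one obtains $\sup_{q\in B}\|\mathfrak{b}(\omega)(q)\|\le f\big(\sup_{q\in B}d(q,p_0)\big)\cdot\epsilon$, uniformly in $\omega$ and in the choice of section, so $\tilde{\mathfrak{a}}\to\mathfrak{a}$ on bounded sets without any control on $\phi(\Omega)$.
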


Here, the convergence of a sequence $\tilde{\mathfrak{a}}_N  \!: \Omega \to \Kill(H)$
to $\mathfrak{a}$ is in the following sense: for each bounded subset $B$ of $H$, the 
sequence $\tilde{\mathfrak{a}}_N (\omega)(p)$ converges to $\mathfrak{a}(\omega)(p)$ 
uniformly with respect to $(\omega,p) \in \Omega \times B$. (See 
\cref{sss.killing} for more precise explanation about the topologies.)

\medskip

Although the proofs of these theorems follow the same ideas as the discrete-time versions,
the technical details are of a different nature. 
Thus we prove the two kinds of results in a nearly independent way.

As its discrete-time analogue, the proof of \cref{t.slow_C} uses barycenters,
but we also need to concern ourselves with differentiability with respect to the flow.

The proof of \cref{t.closing_C} uses a infinitesimal uniform homogeneity property of the space $H$.
The proof of this property, like of its macroscopic version, 
is simpler in the locally compact case but uses finer geometrical arguments in the general case.

\subsection{Other group actions}\label{ss.general_nilp}

It is natural to ask whether the previous theorems extend to cocycles of isometries over actions 
of  semigroups more complicated than $\Z_+$ or $\R_+$. We will concentrate on 
discrete groups, leaving the generalizations to continuous groups as a task to the reader.

Given a group $\Gamma$ acting on the left by homeomorphisms of a compact Hausdorff topological space 
$\Omega$ and a topological group $G$, a cocycle over the $\Gamma$-action with values 
in $G$ is a continuous map 
\begin{align*}
	A \!: \Gamma \times \Omega &\to G \\ 
	(g,\omega) &\mapsto A^{(g)}(\omega)
\end{align*}
such that 
$$
A^{(gh)} (\omega) = A^{(g)} (h \omega) A^{(h)} (\omega)
\quad\text{for all $g$,$ h$ in $\Gamma$ and all $\omega \in \Omega$.}
$$

In the classical case (that is, when $G = \mathbb{R}$), 
the analogue of \cref{t.slow_D} holds for nilpotent group actions,
but it does not hold for solvable (in particular, amenable) group actions; see \cite{MOP2,MOP1}. 
The next result establishes that the analogue of \cref{t.slow_D} remains true for cocycles of isometries 
over abelian group actions. Generalizations for virtually nilpotent 
group actions will be discussed in \cref{s.proofs_G}. 

Assume that $A$ is a cocycle of isometries of a space $(H,d)$.
We say that $A$ has \emph{sublinear drift along cyclic 
subgroups} if for each fixed $p_0 \in H$ and all $g \in \Gamma$,  the limit
$$
\lim_{n \to \infty} \frac{1}{n} d (A^{(g^n)} (\omega) p_0, p_0)
$$
equals zero uniformly on $\Omega$. 
We say that $A$ \emph{admits almost-invariant sections} if there 
exists a sequence of continuous functions $\varphi_N: \Omega \to H$ such that for all 
$g \in \Gamma$,
$$\lim_{N \to \infty} d \big( A^{(g)} (\omega) \varphi_N (\omega), \varphi_N (g \omega)  \big) = 0$$
uniformly on $\Omega$.

\begin{mainthm}[Existence of almost-invariant sections; abelian groups]\label{t.slow_G}
Let $\Gamma$ be a finitely generated abelian group acting by homeomorphisms of a 
compact Hausdorff metric space $\Omega$. 
Let $A$ be a cocycle over this group action 
with values in the group of isometries of a Busemann space $H$. 
If $A$ has sublinear drift along cyclic subgroups,  
then $A$ admits almost-invariant sections.
\end{mainthm}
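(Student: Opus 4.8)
The plan is to adapt the empirical--barycenter construction underlying \cref{t.slow_D} to a F\o{}lner averaging over the group. By the structure theorem write $\Gamma\cong\Z^k\times T$ with $T$ finite, fix free generators $g_1,\dots,g_k$, and use the F\o{}lner boxes $S_N:=\big([-N,N]^k\cap\Z^k\big)\times T$. First I would upgrade the hypothesis from cyclic subgroups to boxes. Set $\ell_i(n):=\sup_\omega \dist\big(A^{(g_i^n)}(\omega)p_0,p_0\big)$. The cocycle relation makes $n\mapsto\ell_i(n)$ subadditive, so $\ell_i(n)/n\to0$ forces $\max_{n\le N}\ell_i(n)=o(N)$; decomposing an arbitrary $g\in S_N$ into generator powers and telescoping with the triangle inequality (the torsion factor contributing only a bounded amount) then yields
\[
R_N \;:=\; \sup_{\omega}\ \max_{g\in S_N}\dist\big(A^{(g)}(\omega)p_0,p_0\big) \;=\; o(N).
\]

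Next I would invoke the barycenter $\bary$ developed for \cref{t.slow_D}, using three of its properties: it is defined for any finite family of points of $H$ and depends continuously on them; it is equivariant under isometries, $U\,\bary\{x_j\}=\bary\{Ux_j\}$; and it is \emph{stable under replacing a small fraction of points}, namely if two finite families of equal cardinality lie in a ball of radius $R$ and agree except on a fraction $\delta$ of their members, then their barycenters lie within $C\delta R$ (the bound linear in $\delta$ that comes from the convexity of $\dist(\cdot,x)^2$). Define the candidate sections
\[
\varphi_N(\omega) \;:=\; \bary\big\{\,A^{(g)}(\omega)^{-1}p_0 \ :\ g\in S_N\,\big\},
\]
which are continuous by the first two properties together with the continuity of the cocycle.

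The heart of the argument is to estimate $\dist\big(A^{(h)}(\omega)\varphi_N(\omega),\varphi_N(h\omega)\big)$ for a generator $h$; since ``admits almost-invariant sections'' is a statement about each fixed group element, controlling generators suffices. Using the cocycle relation and commutativity one has $A^{(g)}(h\omega)^{-1}=A^{(h)}(\omega)A^{(gh)}(\omega)^{-1}$, so equivariance rewrites both $A^{(h)}(\omega)\varphi_N(\omega)$ and $\varphi_N(h\omega)$ as the isometry $A^{(h)}(\omega)$ applied to the barycenter of $\{A^{(g)}(\omega)^{-1}p_0\}$ over $S_N$, respectively over $S_Nh$; cancelling this isometry gives
\[
\dist\big(A^{(h)}(\omega)\varphi_N(\omega),\varphi_N(h\omega)\big)
=\dist\big(\bary_{S_N},\,\bary_{S_Nh}\big),
\]
the two barycenters being taken of the \emph{same} assignment $g\mapsto A^{(g)}(\omega)^{-1}p_0$ over the index sets $S_N$ and $S_Nh$. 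For a free generator these index sets agree off a fraction $\delta_N=O(1/N)$ of their members, on which the points coincide, so stability bounds the right-hand side by $C\delta_N R_N=O(1/N)\cdot o(N)\to0$ uniformly in $\omega$; for a torsion generator $S_Nh=S_N$ and the distance is identically $0$. Finally, almost-invariance along generators propagates to every $g\in\Gamma$: writing $g$ as a fixed word and telescoping with the cocycle relation (each intermediate step an isometry), the error for $g$ is dominated by a finite sum of generator errors, hence tends to $0$. This produces the desired almost-invariant sections.

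I expect the main obstacle to be the interplay packaged in $\delta_N R_N\to0$. It succeeds exactly because $\Z^k$ carries F\o{}lner sets whose boundary-to-bulk ratio $O(1/N)$ outpaces the sublinear radius growth $o(N)$, and this is the balance that breaks for solvable groups, in agreement with the counterexamples cited from \cite{MOP2,MOP1}. The other delicate ingredient, imported from \cref{t.slow_D}, is the linear-in-$\delta$ barycenter stability in a merely Busemann (not necessarily $\mathrm{CAT}(0)$) space, where $\dist(\cdot,x)^2$ need not be strongly convex, so the barycenter must be constructed with care for this estimate to hold.
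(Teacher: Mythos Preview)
Your proposal is correct and follows the same overall strategy as the paper: define $\varphi_N$ as the barycenter of the orbit $\{A^{(g)}(\omega)^{-1}p_0\}$ over a F\o{}lner box, then compare the two barycenters indexed by $S_N$ and $S_Nh$ using the $1$-Lipschitz property of $\bary$ with respect to $W_1$.

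There is one tactical difference worth noting. You first prove the preliminary radius bound $R_N=\sup_\omega\max_{g\in S_N}\dist(A^{(g)}(\omega)p_0,p_0)=o(N)$ and then invoke the generic estimate ``fraction $\delta$ replaced inside a ball of radius $R$ $\Rightarrow$ barycenter moves $\le 2\delta R$''. The paper avoids this preliminary step: with one-sided cubes $C_N=\{0\le m_j<N\}$, it pairs the two boundary faces of $C_N$ and $C_N-e_i$ so that corresponding points differ by exactly $Ne_i$; the barycenter Lipschitz bound \eqref{e.bar_lips_D} then gives directly
\[
\dist\big(A^{(e_i)}(\omega)\varphi_N(\omega),\varphi_N(e_i\omega)\big)\ \le\ \frac{1}{N}\sup_{\omega'}\dist\big(A^{(Ne_i)}(\omega')p_0,p_0\big),
\]
which uses the sublinear drift along the single cyclic subgroup $\langle e_i\rangle$ without ever bounding the global radius. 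Your route is a bit more roundabout but equally valid---and in fact it is precisely the argument the paper uses for the virtually nilpotent extension (\cref{t.slow_nilp}), where no convenient pairing exists and one must fall back on the $\delta_N R_N\to 0$ balance you describe. Your parenthetical attribution of the stability bound to ``convexity of $\dist(\cdot,x)^2$'' is a slip (that is the Cartan barycenter, needing $\mathrm{CAT}(0)$); the correct source is the $W_1$-Lipschitz property of the Es-Sahib--Heinich--Navas barycenter (\cref{t.bary}), as you yourself flag at the end.
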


As it is easy to see, in order to check the condition on drift along all cyclic subgroups above, 
it suffices to check it only for those associated to the generators of the group $\Gamma$. 

\medskip

%
%
%

We actually provide an extension of \cref{t.slow_G} to virtually nilpotent groups $\Gamma$; 
see \cref{s.proofs_G}. This is 
somewhat related to results obtained by de~Cornulier, Tessera and Valette \cite{CTV}; 
in their considerations, $\Omega$ is a point and $H$ is a Hilbert space.

\subsection{Further questions}

We next mention a few other questions that are suggested by our results.

\medskip

The first question is whether our results can be extended to cocycles of \emph{semicontractions}.
Notice that the basic fact \eqref{e.mindispl_D} 
(as well as the related theorem from \cite{KarM}) still hold in this case.

\medskip

Fix the dynamics $F \!: \Omega\to\Omega$, and let $A \!: \Omega \to \R$ be a real function.
As mentioned above (\cref{ss.examples}), we can regard this as a cocycle of orientation-preserving 
isometries of the line. It follows from the ``variational principle'' \eqref{e.variational}
that $\drift(F, A) = \sup_\mu \left| \int A \, \mathrm{d}\mu \right|$.
The study of the probability measures which realize this supremum
is an interesting problem with rich ramifications in {\em ergodic optimization}; see \cite{Jenkinson}. 
In this spirit, it could be also interesting to study \emph{drift maximizing measures}, i.e.,
those which realize the supremum in \eqref{e.variational}.

A perhaps related problem is to find out when a displacement minimizing section exists in \cref{t.slow_D} 
(or, in the language of \cref{r.drift_as_TL}, when is $\Gamma$ a semi-simple isometry.)

\medskip

All our results concern approximations in the $C^0$ topology.
It is natural to ask whether the differentiability class can be improved.
However, this is non-trivial already in the classical case (i.e., with $G=\R$),
where it is closely related to the existence of invariant distributions
for the base dynamics (see \cite{Kat,AK,NT}).
We do not know whether such a relation can be extended to the cocycles considered in this work.

\medskip

Finally, a natural problem raised by this work concerns the case of diffeomorphisms:
in the situation of \cref{t.matrix}, if a cocycle is given by the derivative of a diffeomorphism, 
then it is natural to require that the perturbed cocycle is also a derivative.
More precisely, we pose the following question:
given a diffeomorphism $f$ of a compact manifold all whose Lyapunov exponents are zero,
under what circumstances is $f$ close to a diffeomorphism that is conjugate to an isometry?
Our methods fail in dealing with this problem, 
as it involves a simultaneous (and coherent) perturbation of the base dynamics and the cocycle.
Let us point out, however, that the answer (in $C^1$ regularity) is known in the one-dimensional case;  
see \cite{BoGu,Navas-rapprochements}.

\subsection{Organization of the paper}

In \cref{s.proofs_D} we prove the discrete-time \cref{t.slow_D,t.closing_D};
we also explain how to obtain the application \cref{t.matrix}.

In \cref{s.proofs_G} we deal with other (still discrete) group actions,
thus proving \cref{t.slow_G} and an extension of it.
That section is shorter and uses the tools explained in the previous one.

In \cref{s.proofs_C} we deal with continuous-time cocycles, 
thus proving \cref{t.slow_C,t.closing_C}.
Although the main ideas of the proofs are similar to those of the discrete-time results, 
the technical details are somewhat different,
and this section is actually nearly independent from the previous ones.

Along the way, we explain the geometrical tools and properties that are required for 
the proofs. Some of these properties become simpler to prove if the space $H$ 
is proper (i.e., finite-dimensional in the case of manifolds).
Thus in \cref{s.proofs_D,s.proofs_C} we give proofs for the proper case,
and leave for Appendices~\ref{s.alexandrov} and \ref{s.killing} 
the geometrical arguments which allow to extend the proofs to the general case.
A technical property which is needed for the proof of \cref{t.closing_C}
is proved in Appendix~\ref{s.diff_cartan}.

\bigskip

\begin{ack}
We thank A.~Avila, E.~Garibaldi, N.~Gourmelon, A.~Karlsson, A.~Kocsard, 
G.~Larotonda, M.~Ponce and R.~Tessera for valuable discussions. 
We thank the hospitality of the Institut de Math\'ematiques de Bourgogne, where this work started.
We are grateful to the referee for remarks and suggestions that helped to improve the paper.

J.~Bochi was partially funded by the research projects CNPq 309063/2009-4 and FAPERJ 103.240/2011.
A.~Navas was partially funded by the research projects ACT-1103 DySyRF and FONDECYT 1120131.
\end{ack}

\section{Discrete-time cocycles} \label{s.proofs_D}

In this section we prove \cref{t.slow_D,t.closing_D}.
We also explain how to deduce the application \cref{t.matrix}.

\subsection{The easy inequality}\label{ss.mindispl_D}

Let us prove that the displacement of any section is an upper bound 
for the drift of the cocycle, as asserted in \eqref{e.mindispl_D}.
Here, no assumptions on the geometry of $H$ are needed.

\begin{proof}[Proof of \eqref{e.mindispl_D}]
We have
\begin{align*}
\dist \big( A^{(n)}(\omega) \phi(\omega), \phi(F^n \omega) \big) &\le 
\sum_{j=0}^{n-1} \dist \big( A^{(n-j)}  (F^{j}\omega)   \phi(F^{j}\omega), 
                             A^{(n-j-1)}(F^{j+1}\omega) \phi(F^{j+1}\omega) \big) \\
&=
\sum_{j=0}^{n-1} \dist \big( A(F^j \omega)\phi(F^j\omega) ,  \phi(F^{j+1}\omega)\big) \\
&\le n \displ(\phi).
\end{align*}
Dividing by $n$ and passing to the limit, we obtain
$\drift(F,A) \le \displ(\phi)$, as desired.
\end{proof}

\subsection{Preliminaries}

The proof of \cref{t.slow_D} requires the preliminaries below.

\subsubsection{Busemann spaces}\label{sss.Busemann}

Let $(H, \dist)$ be a separable metric space.
We say that $H$ is a \emph{geodesic space} 
if it is complete and every two points $p$, $q$ in $H$ can joined by a \emph{geodesic}, 
that is, a curve $\gamma \!: [0, 1] \to H$ such that $\gamma(0) = p$, $\gamma(1)=q$, 
and $\dist(\gamma(t), \gamma(s)) = |t-s| \dist(p,q)$ for all $s,t$ in $[0,1]$. 
If these curves are unique for arbitrarily prescribed $p,q$, 
then we say that $H$ is \emph{uniquely geodesic}.

The space $H$ has {\em nonpositive curvature in the 
sense of Busemann} (it is a {\em Busemann space}, for short) 
if it geodesic and the distance function along geodesics is 
convex. Equivalently, given any two pairs of points $p$, $q$ and 
$p'$, $q'$, their corresponding midpoints $m := \mid (p,q)$ 
and $m' := \mid (p',q')$ satisfy
\begin{equation}\label{e.media}
\dist (m,m') \leq \frac{\dist (p,p')}{2} + \frac{\dist (q,q')}{2}.
\end{equation}
The family of such spaces obviously includes all strictly convex 
Banach spaces. 
(General Banach spaces may also be included in this category when considering only segments of lines as geodesics.) 
It also includes complete simply connected Riemannian manifolds of nonpositive curvature
(such as those that will appear in the proof of \cref{t.matrix}).
For infinite-dimensional examples, see the remarks in \cref{ss.matrix}.

\subsubsection{Barycenter maps}\label{sss.bary}

Given a metric space $(H,\dist)$, we denote by $\mathcal{P}^1 (H)$ the space 
of probability measures on $H$ with finite first moment, that is, 
such that for some (equivalently, for every) $p_0 \in H$ one has 
$$
\int_H \dist (p_0,p)\, \mathrm{d}\mu (p) < \infty.
$$
We endow this space with the $1$-Wasserstein metric $W_1$ defined as 
\begin{equation}
W_1(\mu,\nu) := \inf_{P \in (\mu \mid \nu)} \int_{H \times H} 
\dist (p,q) \, \mathrm{d}P (p,q),
\end{equation}
where $(\mu \mid \nu)$ denotes the set of all 
{\em couplings} of $\mu$ and $\nu$, that is, all 
probability measures $P$ on $H \times H$ whose projection along the 
first (resp.\ second) coordinate coincides with $\mu$ (resp.\ $\nu$). 

\begin{example}\label{exa.wass}
If $\mu$, $\nu_1$, $\nu_2 \in \mathcal{P}^1 (H)$ then for every $\lambda \in [0,1]$,
$$
W_1 \big( (1-\lambda)\mu + \lambda\nu_1, (1-\lambda)\mu + \lambda\nu_2 \big) \le 
\lambda \cdot \sup\{\dist(p_1,p_2) \, \!: p_i \in \supp \nu_i\}.
$$
Indeed, this follows directly from the definitions by considering the coupling 
$(1-\lambda) i_* \mu + \lambda \, \nu_1 \times \nu_2${\,}, where
$i(p) := (p,p)$.
\end{example}

For much more on Wasserstein metrics, see e.g.\ \cite{Villani}.

Let $\delta_p$ denote Dirac measure on $p$.

\begin{theorem} \label{t.bary} 
If $H$ is a Busemann space, then there exists a map $\bary \!: \mathcal{P}^1 (H) \to H$ 
that satisfies $\bary(\delta_p) = p$ for each $p$, 
is equivariant with respect to the action of the isometries,
and is $1$-Lipschitz for the $1$-Wasserstein metric on $\mathcal{P}^1 (H)$.
\end{theorem}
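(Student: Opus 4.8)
The plan is to construct $\bary$ first on the $W_1$-dense set of finitely supported measures with rational weights, to verify the three required properties there, and then to extend $\bary$ to all of $\mathcal{P}^1(H)$ by continuity. The only geometric input will be the midpoint inequality \eqref{e.media}: it says that the midpoint map $\mid \!: H \times H \to H$ is $1$-Lipschitz for the cost $\tfrac12\dist+\tfrac12\dist$, and combined with the symmetry $\mid(p,q)=\mid(q,p)$ this is exactly the $W_1$-Lipschitz property for uniform two-point measures. Moreover $\mid$ is isometry-equivariant and single-valued, since Busemann convexity makes $H$ uniquely geodesic. These are the features that will be propagated.

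First I would build, by induction on $n$, symmetric maps $b_n \!: H^n \to H$ assigning to $(x_1,\dots,x_n)$ the barycenter of the uniform measure $\tfrac1n\sum_i \delta_{x_i}$. Set $b_1 = \id$ and $b_2 = \mid$. For $n \ge 3$, in the spirit of the inductive barycenter of Es-Sahib and Heinich, consider the ``leave-one-out'' map $F \!: H^n \to H^n$ with $F(x)_i := b_{n-1}(x_1,\dots,\widehat{x_i},\dots,x_n)$. Using the coupling that leaves the common entries fixed, the distance between the two deleted uniform measures is at most $\tfrac{1}{n-1}\dist(x_i,x_j)$, so the inductive Lipschitz bound for $b_{n-1}$ gives $\mathrm{diam}\,F(x)\le\tfrac{1}{n-1}\,\mathrm{diam}\,x$. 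Since each entry of $F(x)$ is a barycenter of entries of $x$ and hence (again by the inductive bound together with $b_{n-1}(p,\dots,p)=p$) lies within $\mathrm{diam}\,x$ of them, the iterates $F^k(x)$ are Cauchy and their coordinates collapse to a single point, which I define to be $b_n(x)$ (completeness of $H$ is used here). Symmetry, the equivariance $b_n(gx_1,\dots,gx_n)=g\,b_n(x)$, and $b_n(p,\dots,p)=p$ are inherited because $F$ commutes with coordinate permutations and with the diagonal $\Isom(H)$-action. The equal-cardinality Lipschitz estimate also propagates: matching coordinates gives $\sum_i \dist(F(x)_i,F(x')_i)\le \sum_i \dist(x_i,x'_i)$, and letting $k\to\infty$ yields $\dist(b_n(x),b_n(x'))\le \tfrac1n\sum_i \dist(x_i,x'_i)$; optimizing over the labelling gives $\dist(b_n(x),b_n(x'))\le W_1\big(\tfrac1n\sum_i\delta_{x_i},\tfrac1n\sum_i\delta_{x'_i}\big)$.

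I expect the main obstacle to be showing that $b_N$ depends only on the measure, i.e. the \emph{consistency} under duplication $b_{mN}(x_1,\dots,x_1,\dots,x_N,\dots,x_N)=b_N(x_1,\dots,x_N)$ (each point repeated $m$ times). This is delicate because a duplicated configuration is preserved by $F$, yet the iteration it induces on $N$-tuples is \emph{not} the original leave-one-out map, so the two collapse points must be matched by a separate argument internal to the combinatorics of the scheme. Granting consistency, $\bary$ is well defined on the set $D$ of rational-weight finitely supported measures (each being uniform on a finite multiset), and the general inequality $\dist(\bary(\mu),\bary(\nu))\le W_1(\mu,\nu)$ on $D$ follows from the equal-cardinality case by passing to a common refinement of the two multisets.

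Finally, since $D$ is $W_1$-dense in $\mathcal{P}^1(H)$ (here separability of $H$ is used) and $H$ is complete, the $1$-Lipschitz map $\bary|_D$ extends uniquely to a $1$-Lipschitz map $\bary \!: \mathcal{P}^1(H)\to H$. The normalization $\bary(\delta_p)=p$ holds already on $D$, and equivariance passes to the limit because $\Isom(H)$ acts on $\mathcal{P}^1(H)$ by $W_1$-isometries; this delivers all three asserted properties.
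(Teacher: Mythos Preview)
The paper does not give its own proof of this theorem; it cites \cite{Navas}, which in turn elaborates on the inductive construction of Es--Sahib and Heinich \cite{EH}. Your definition of the symmetric maps $b_n$ via the leave-one-out iteration, and your verification of symmetry, isometry-equivariance, $b_n(p,\dots,p)=p$, and the permutation-optimized estimate $\dist\big(b_n(x),b_n(x')\big)\le W_1\big(\tfrac1n\sum_i\delta_{x_i},\tfrac1n\sum_i\delta_{x'_i}\big)$, are correct and match that construction exactly.

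The gap is precisely where you place it, and ``granting consistency'' does not close it. Your scheme needs $b_{mN}(x_1^{(m)},\dots,x_N^{(m)})=b_N(x_1,\dots,x_N)$ in order for $\bary$ to be well defined on rational-weight measures, and as you yourself observe, the leave-one-out map on a duplicated tuple does \emph{not} reduce to the leave-one-out map on the underlying $N$-tuple, so there is no evident combinatorial reason for the two limit points to agree; no such consistency statement is established in \cite{EH} or \cite{Navas}. The route actually taken in \cite{Navas} bypasses consistency altogether by a probabilistic extension: for $\mu\in\mathcal{P}^1(H)$ one draws an i.i.d.\ sequence $(\xi_i)$ with law $\mu$ and shows, using the Lipschitz estimate you already have together with the almost-sure $W_1$-convergence of the empirical measures $\tfrac1n\sum_{i\le n}\delta_{\xi_i}$ to $\mu$ (this is where separability of $H$ and the first-moment hypothesis are used), that $b_n(\xi_1,\dots,\xi_n)$ converges almost surely to a limit that does not depend on the sample. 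That limit is taken as $\bary(\mu)$; the $1$-Lipschitz property for $W_1$, equivariance, and $\bary(\delta_p)=p$ then pass to the limit. So the missing ingredient in your outline is this law-of-large-numbers extension, not a direct proof of duplication consistency.
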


In particular, in the situation of \cref{exa.wass}, we have
\begin{equation}\label{e.bar_lips_C}
\dist \big( \bary((1-\lambda)\mu + \lambda\nu_1), 
\bary((1-\lambda)\mu + \lambda\nu_2) \big) \le 
\lambda \cdot \sup\{\dist(p_1,p_2) \, \!:  p_i \in \supp \nu_i\}.
\end{equation}

We also denote
$$
\bary(p_1, \dots, p_n) := \bary \frac{1}{n}(\delta_{p_1} + \cdots + \delta_{p_n}),
$$
It follows from (\ref{e.bar_lips_C}) that  
\begin{equation}\label{e.bar_lips_D}
\dist \big( \bary(p_1, \dots, p_{n-1}, p_n), \bary(p_1, \dots, p_{n-1}, p_n') \big) 
\le \frac{1}{n} \dist(p_n,p_n').
\end{equation}
	
\medskip

Henceforth, we will not need to know what precisely is the map $\bary$ above, 
although its geometrical flavor should be intuitively transparent. For instance,
$\bary(p)$ coincides with $p$, while $\bary(p_1,p_2)$ is the midpoint between 
$p_1$ and $p_2$. The definition of $\bary(p_1,p_2,p_3)$ is, however, quite involved.

In its full generality, Theorem \ref{t.bary} above was proved by Navas in \cite{Navas} by 
elaborating on an idea introduced by Es-Sahib and Heinich in \cite{EH}. Nevertheless, 
for compactly supported measures $\mu$ on CAT(0)-spaces, 
a much more classical notion 
of barycenter due to Cartan (see \cite[Note~III, Part~IV]{Cartan}) is enough for our purposes. 
(See \cref{sss.alexandrov} for the definition of CAT(0) spaces; see also \cite{BK, Jost, ArLi}.)
The Cartan barycenter of $\mu$ as above is defined as the unique point that minimizes the function
\begin{equation}\label{e.sum_squares}
f_\mu(p) = \int_H \dist^2(q,p) \, \mathrm{d}\mu(q).
\end{equation}
The fact that Cartan's barycenter is $1$-Lipschitz 
with respect to the $1$-Wasserstein metric is proved in \cite{Sturm}.

\subsection[Proof of \cref{t.slow_D}]{Existence of sections of nearly minimal displacement: 
proof of \cref{t.slow_D}}\label{ss.proof_slow_D}

\begin{proof}[First proof of \cref{t.slow_D}]
Given a cocycle $A$ of isometries of a Busemann space $H$ over 
$F \!: \Omega \to \Omega$, we fix any $p_0 \in H$. Let 
\begin{equation}\label{e.magic2}
\phi_N (\omega) := 
\bary \left( p_0, A(\omega)^{-1} p_0 , [A^{(2)}(\omega)]^{-1} p_0 , \dots, 
[A^{(N-1)}(\omega)]^{-1} p_0 \right),
\end{equation}
where $\bary$ stands for the barycenter introduced in \cref{t.bary}. 
Then, by equivariance of the barycenter, we have:   
\begin{small}\begin{align*}
A(\omega)\phi_N (\omega) &= 
\bary \left( A(\omega)  p_0, p_0, A(F\omega)^{-1} 
p_0, \dots, [A^{(N-2)}(F\omega)]^{-1} p_0
\phantom{, [A^{(N-1)}(F\omega)]^{-1} p_0} \right),
\\
\phi_N (F \omega) &= 
\bary \left( \phantom{A(\omega)  p_0,} p_0, A(F\omega)^{-1} p_0, 
\dots, [A^{(N-2)}(F\omega)]^{-1} p_0, [A^{(N-1)}(F\omega)]^{-1} p_0  \right).
\end{align*}\end{small}

Using \eqref{e.bar_lips_D}, we obtain 
\begin{align*}
\dist \big( A(\omega) \phi_N (\omega) , \phi_N (F \omega) \big) 
&\le \frac{1}{N} \dist \big(A(\omega)  p_0, [A^{(N-1)}(F\omega)]^{-1} p_0 \big) \\
&=   \frac{1}{N} \dist \big(A^{(N)}(\omega)  p_0, p_0 \big). 
\end{align*}
In particular,
$$
\displ(\phi_N) \le \frac{1}{N} \sup_{\omega \in \Omega} 
\dist \big(A^{(N)}(\omega) p_0, p_0 \big). 
$$
The theorem then follows by taking $\phi=\phi_N$ with 
sufficiently large $N$ (depending on $\varepsilon$).
\end{proof}

\begin{remark}\label{r.family}
We can extend the family $\{\phi_N\}_{N \in \Z_+}$ to a family $\{\phi_t\}_{t\in \R}$ by letting
$$
\phi_t(\omega) := 
\bary \big( (1-t+N) \delta_{\phi_N(\omega)} +  (t-N) \delta_{\phi_{N+1}(\omega)}\big),
\quad \text{where }
N = \lfloor t \rfloor.
$$
Then $\lim_{t \to \infty} \displ (\phi_t) = \drift(F,A)$;
moreover, $\phi_t$ depends continuously on $t$ and also on $A$.
(Such parameterized sections play an important role for the 
particular case considered in \cite{ABD2}.)~\closeremark 
\end{remark}

\medskip

As the referee informed us, \cref{t.slow_D} was already known in the case that $\Omega$ is a point; 
see \cite[Lemma~6.6]{BGS}. In fact, it is possible to adapt the argument therein to give another 
proof of \cref{t.slow_D}:

\begin{proof}[Second proof of \cref{t.slow_D}]
Let $\phi_0 \colon \Omega \to H$ be any section (e.g., constant equal to the basepoint $p_0$).
Then 
$$
\drift(F,A) = \lim_{n\to \infty} \frac{1}{n} \displ_{F^n, A^{(n)}} (\phi_0).
$$

Take $n$ of the form $2^k$ such that $\frac{1}{n} \displ_{F^n, A^{(n)}} (\phi_0) < \drift(F,A) + \epsilon$.
If $k=0$, then we are done, so assume that $k \ge 1$.

Define a new section by
$$
\phi_1 (\omega) := \mid \Big[A^{(n/2)}(F^{-n/2} \omega) \phi_0(F^{-n/2} \omega) , \ \phi_0(\omega)\Big] \, ,
$$
where $\mid$ stands for the midpoint of a segment.
Then 
\begin{align*}
A^{(n/2)}(\omega) \phi_1 (\omega)
&= \mid \Big[ A^{(n)}(F^{-n/2} \omega) \phi_0(F^{-n/2} \omega) , \ A^{(n/2)}(\omega) \phi_0(\omega) \Big] \, , \\
\phi_1 (F^{n/2} \omega) 
&= \mid \Big[A^{(n/2)}(\omega) \phi_0(\omega) , \ \phi_0(F^{n/2} \omega) \Big] \, .
\end{align*}
Thus, by the Busemann property,
$$
d \Big( A^{(n/2)}(\omega) \phi_1 ( \omega), \phi_1 (F^{n/2} \omega) \Big) 
\le d \Big( A^{(n)}(F^{-n/2} \omega) \phi_0(F^{-n/2} \omega) , \  \phi_0(F^{n/2} \omega) \Big) \, ; 
$$
in particular,
$$
\displ_{F^{n/2}, A^{(n/2)}} (\phi_1) \le \frac{1}{2} \displ_{F^n, A^{(n)}} (\phi_0) \, .
$$

Repeating this construction, we recursively find sections $\phi_2$, \dots, $\phi_k$ such that
$$
\displ_{F^{n/2^j}, A^{(n/2^j)}} (\phi_j) \le \frac{1}{2} \displ_{F^{n/2^{j-1}}, A^{(n/2^{j-1})}} (\phi_{j-1}) 
\text{ for } 1 \le j \le k.
$$
Therefore,
$$
\displ_{F,A} (\phi_k) \le \frac{1}{2^k} \displ_{F^{n}, A^{(n)}} (\phi_0) < \drift(F,A) + \epsilon,
$$
that is, $\phi:=\phi_k$ has the required properties.
\end{proof}

The proof above does not require general barycenters.
On the other hand, while it seems feasible to adapt this proof to the continuous case,
it is unclear whether or not it can be adapted to more complicate group actions.

\subsection{More preliminaries}

The proof of \cref{t.closing_D} requires additional preliminaries.

\subsubsection{Topologies}\label{sss.topologies}

On the space $\Isom(H)$, we consider the first-countable topology 
for which the convergence of sequences is uniform convergence on bounded subsets.
This is called the \emph{bounded-open topology}.\footnote{If $H$ is proper then this coincides
with the compact-open topology, which is the usual topology on $\Isom(H)$; see e.g.~\cite{Helgason}.}

We also endow the set $C(\Omega, \Isom(H))$ of continuous functions from 
$\Omega$ into $\Isom(H)$ with the compact-open topology. 
A sequence $(A_n)$ converges to $A$ in this topology if and only if for every 
bounded subset $B$ of $H$, the sequence $(A_n(\omega)(p))$ converges to 
$A(\omega)(p)$ uniformly with respect to $(\omega,p) \in \Omega \times B$.

\subsubsection{Translation length}\label{sss.TL}

Recall that the \emph{displacement function} 
of $J\in \Isom(H)$ is the function 
\begin{equation}
p \in H \mapsto \dist(J(p),p).
\end{equation}
Since $H$ is a Busemann space, this function is convex. 
The infimum of the displacement function is called the 
\emph{translation length} of $J$.

\begin{remark}\label{r.drift_as_TL}
Let $A \!: \Omega \to \Isom(H)$ be a cocycle of isometries over a \emph{homeomorphism}
$F \!: \Omega \to \Omega$.
Let us explain how its drift can be seen as a translation length
of a certain isometry.

Let $C(\Omega, H)$ be the set of sections, endowed with the distance
$d(\phi_1,\phi_2) = \sup_{\omega} d(\phi_1(\omega), \phi_2(\omega))$.
(This is a geodesic space, but not a uniquely geodesic one.)
Let $\Gamma = \Gamma_{F,A} \!: C(\Omega, H) \to C(\Omega, H)$ be the \emph{graph transform} defined by
$(\Gamma \phi) (\omega) := A(\omega) \phi(F^{-1} \omega)$.
Then $\Gamma$ is an isometry of $C(\Omega, H)$, and $\displ(\phi) = d(\Gamma \phi, \phi)$.
Therefore \cref{t.slow_D} states that \emph{the drift of a cocycle of isometries
equals the translation length of the associated graph transform.}~\closeremark
\end{remark}

\subsubsection{Symmetric geodesic spaces}\label{sss.symm}

We say that a uniquely geodesic space $H$ is \emph{geodesically complete} if 
the maximal interval of definition of all geodesics is $\R$. 
For such a space, the \emph{symmetry} at a point $p_0 \in H$ 
is the map $\sigma_{p_0} \!: H \to H$ 
that sends $p$ to to the unique point $p'$ such that
$p_0$ is the midpoint between $p$ and $p'$.
So $\sigma_{p_0}$ is an involution.
We say that $H$ is a \emph{symmetric geodesic space} 
if $\sigma_{p_0}$ is a isometry for every $p_0 \in H$,
and the map $(p_0,p) \mapsto \sigma_{p_0}(p)$ is continuous.

\subsubsection{Transvections and a displacement estimate}\label{sss.transvections}

Assume that $H$ is a symmetric geodesic space. Following \'{E}.~Cartan, 
we call a \emph{transvection} an isometry of the form 
$J = \sigma_{p_2} \circ \sigma_{p_1}$. If $\gamma:\R \to H$ is a isometric 
(unit-speed) parametrization of the geodesic passing through $p_1$ and $p_2$, 
say with $\gamma^{-1}(p_1)<\gamma^{-1}(p_2)$, then $J(\gamma(t)) = \gamma(t + b)$ 
holds for all $t\in\R$, where $b = 2\dist(p_1,p_2)$. We say that $J$ translates 
the geodesic $\gamma$ by length $b$.

We remark that if, in addition, $H$ is a Busemann space, then
$\dist (J(q),q) \ge b$ for all $q \in H$; see \cite{Papa}. 
So the translation length of $J$ is precisely $b$.

\begin{lemma}\label{l.displ_estimate_proper}
Assume that $H$ is a symmetric geodesic space.
Also assume that $H$ is proper. Let $J$ be a 
transvection that translates a geodesic 
$\gamma$ by length $b$. Then
\begin{equation}\label{e.displ_estimate}
\dist (J(q), q ) \le f\left( b, \dist(q, \gamma) \right) \text{ for every $q\in H$,}
\end{equation}
where $f \!: \R_+ \times \R_+ \to \R_+$ is a function that depends only on the space $H$,
and is monotonically increasing with respect to each variable.
\end{lemma}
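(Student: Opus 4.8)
The plan is to take for $f$ the \emph{largest} displacement that can occur and then read off its properties from the geometry. Concretely, define
\[
f(b,r) := \sup\big\{\, \dist(J(q),q) \;:\; J \text{ is a transvection of translation length} \le b,\ \dist(q,\gamma_J)\le r \,\big\},
\]
where $\gamma_J$ is the geodesic translated by $J$. Monotonicity in each variable is then immediate, since enlarging $b$ or $r$ only enlarges the set over which the supremum is taken; moreover $f(0,r)=0$, because the only transvection of translation length $0$ is the identity. With this $f$ the asserted inequality $\dist(J(q),q)\le f(b,\dist(q,\gamma))$ is tautological, and $f$ manifestly depends only on $H$. Thus everything reduces to showing that $f(b,r)<\infty$ — and, for the use made of this lemma when creating invariant sections, that in fact $f(b,r)\to 0$ as $b\to 0$ with $r$ fixed.

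A crude estimate already yields finiteness: choosing $q_0\in\gamma_J$ with $\dist(q,q_0)\le \dist(q,\gamma_J)+\epsilon$ and using that $J$ is an isometry translating $\gamma_J$ by $b$ (so $\dist(J(q_0),q_0)=b$), the triangle inequality gives $\dist(J(q),q)\le 2(r+\epsilon)+b$, whence $f(b,r)\le b+2r$. This bound does not tend to $0$ as $b\to 0$, so it is too weak for the application, and it is here that properness must enter. The idea is to normalize the configuration by a compactness argument. Since $H$ is symmetric, its isometry group acts transitively (the symmetry at the midpoint of $p,p'$ carries $p$ to $p'$); fixing a basepoint $o$ and letting $q_0$ be a foot of the perpendicular from $q$ to $\gamma_J$ — which exists because the convex function $\dist(q,\cdot)$ is proper along $\gamma_J$ and $H$ is proper — I would apply an isometry so that $q_0=o$. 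Writing the transvection anchored at $o$ as $J=\sigma_m\circ\sigma_o$, with $m\in\gamma_J$ at distance $b/2$ from $o$ in the translation direction (this uses the obstacle below), the remaining data range over compact sets: the direction of $\gamma_J$ through $o$ (unit-speed geodesic germs at $o$ form an equicontinuous, bounded, hence by Arzel\`a--Ascoli and properness precompact family), the point $q\in\overline{B}(o,r)$, and the length in $[0,b]$. As $(J,q)\mapsto\dist(J(q),q)$ is continuous, the supremum is finite and attained; and as $b\to 0$ the point $m$ tends to $o$, so $J=\sigma_m\circ\sigma_o\to\sigma_o\circ\sigma_o=\id$ by continuity of the symmetry map $(p_0,p)\mapsto\sigma_{p_0}(p)$, forcing $f(b,r)\to 0$.

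The main obstacle is the structural input used in the normalization: that the transvection translating a given geodesic $\gamma$ by a given signed length is \emph{uniquely determined}, equivalently that $\sigma_{\gamma(t)}\circ\sigma_{\gamma(s)}$ depends only on $t-s$. This phase-independence is exactly what lets me anchor the two reflection centers near $o$; without it they could recede to infinity along the axis while keeping their (shrinking) mutual distance, and the limit of transvections of vanishing translation length could then be a nontrivial isometry fixing $\gamma$ pointwise (a rotation about the axis) rather than $\id$, which would break the convergence $J\to\id$. The basic tool toward phase-independence is the relation $\sigma_p\circ\sigma_q\circ\sigma_p=\sigma_{\sigma_p(q)}$, valid because both sides are isometric involutions reversing every geodesic through $\sigma_p(q)$, and such a symmetry is unique in a geodesically complete uniquely geodesic space; one upgrades this to the statement that the transvections along $\gamma$ form a one-parameter group, whence $\sigma_{\gamma(t)}\circ\sigma_{\gamma(s)}$ is the element attached to $t-s$. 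In the present proper, geodesically complete, symmetric setting $H$ is in fact a Riemannian symmetric space, where this is classical, so I would either cite it there or carry out the one-parameter-group argument directly. Once phase-independence is available, the compactness argument above furnishes a finite, monotonically increasing $f$ with the required bound (and with $f(b,r)\to 0$ as $b\to 0$), completing the proof.
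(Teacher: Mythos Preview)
Your argument is correct and in some ways goes further than the paper's. For the lemma exactly as stated, your crude bound $f(b,r)\le b+2r$ already suffices, and it does not even require properness. The paper instead fixes a basepoint $p_0$ and sets
\[
\tilde f(p_0,b,\ell):=\sup\big\{\dist(J(q),q):\ J=\sigma_{p_1}\circ\sigma_{p_0},\ \dist(p_1,p_0)\le b/2,\ \dist(q,p_0)\le\ell\big\};
\]
properness makes the two balls compact, continuity of the symmetry map then yields finiteness, and transitivity of $\Isom(H)$ makes $\tilde f$ independent of $p_0$.

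The substantive difference shows up in the convergence $f(b,\cdot)\to 0$ as $b\to 0$, which, as you correctly note, is what the application actually needs (though it is not part of the lemma's statement). Your route to this goes through phase-independence of transvections, which you sketch via $\sigma_p\sigma_q\sigma_p=\sigma_{\sigma_p(q)}$ and the one-parameter-group property. The paper's anchored definition sidesteps this entirely: as $b\to 0$ the second center $p_1$ is confined to the shrinking compact ball $\overline B(p_0,b/2)$, so $\sigma_{p_1}\circ\sigma_{p_0}\to\id$ uniformly on compacta by continuity and compactness alone. The price is that the paper's $f$ most directly bounds $\dist(J(q),q)$ in terms of $\dist(q,p_0)$ (the fixed first reflection center) rather than $\dist(q,\gamma)$; passing to the latter does tacitly rely on the very phase-independence you isolate. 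But in the sole application (the macroscopic uniform homogeneity lemma) the transvections are all of the form $\sigma_m\circ\sigma_{p_0}$ with $p_0$ a \emph{fixed} basepoint, so the anchored bound is precisely what is used and phase-independence never enters.
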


\begin{proof} 
Define a function $\tilde f \!: H \times \R_+ \times \R_+ \to \R_+$ by 
$$
\tilde f (p_0, b, \ell) := \sup \big\{ \dist(J(q),q) \, \!:  
J = \sigma_{p_1} \circ \sigma_{p_0}, \ 
\dist(p_1,p_0) \le b/2, \ 
\dist(q,p_0) \le \ell
\big\} \, .
$$
The supremum is finite by properness of $H$ and continuity.
Since the group of isometries acts transitively on $H$,
the value $\tilde f (p_0, b, \ell)$ actually does not depend on $p_0$;
call it $f(b, \ell)$.
Then \eqref{e.displ_estimate} holds.
\end{proof}

\begin{remark}\label{r.continuity}
It follows that under the assumptions of \cref{l.displ_estimate_proper}, 
the map $p_0 \in H \mapsto \sigma_{p_0} \in \Isom(H)$
is continuous (where $\Isom(H)$ is endowed with 
the bounded-open topology, as explained in \cref{sss.topologies}.)~\closeremark 
\end{remark}

\subsubsection[Curvature bounds]{Curvature bounds in the sense of 
Alexandrov, and a displacement estimate}\label{sss.alexandrov}

In the case that $H$ is infinite dimensional, 
the proof of \cref{l.displ_estimate_proper} given above obviously does not work.
Nevertheless, the lemma holds if properness is replaced by some curvature
hypotheses, as we next explain.
Readers who are not interested in infinite dimensional applications can skip this paragraph.

\medskip

Given $\kappa \le 0$, the \emph{model space} $(M_\kappa,d_\kappa)$ is the 
two-dimensional space of constant curvature $\kappa$.\footnote{We will not consider $\kappa>0$ in order to avoid unnecessary complications.}

Let $H$ be a uniquely geodesic space.
A \emph{triangle} $\triang(p_1,p_2,p_3)$ in $H$ consists on three points 
$p_1$, $p_2$, $p_3$ and three geodesic segments joining them.
Suppose that $\triang(\tilde p_1, \tilde p_2, \tilde p_3)$ is a triangle in the model space $M_\kappa$
such that $\dist(p_i,p_j) = \dist_\kappa(\tilde p_i, \tilde p_j)$ for all $i,j$ 
in $\{1,2,3\}$. Then we say that  $\triang(\tilde p_1, \tilde p_2, \tilde p_3)$  is a
\emph{SSS-comparison triangle}\footnote{SSS stands for \emph{side-side-side}.} 
for $\triang(p_1,p_2,p_3)$.

We say that $H$ has \emph{curvature $\le \kappa$ (resp.\ $\ge \kappa$) in the sense of Alexandrov} 
if for every triangle 
$\triang(p_1,p_2,p_3)$ in $H$ and every SSS-comparison triangle 
$\triang(\tilde p_1, \tilde p_2, \tilde p_3)$ in the model space $M_\kappa$, 
the following inequality holds for all $t \in [0,1]$:
$$
\dist \big( p_3, (1-t) p_1 + t p_2 \big) \le \text{  (resp.\ $\ge$) }
\dist_\kappa \big( \tilde p_3, (1-t) \tilde p_1 + t \tilde p_2 \big),
$$
where $t \mapsto (1-t) p_1 + t p_2$ is a short-hand for the geodesic segment joining $p_1$ and $p_2$.

\begin{remark}\label{r.local_global}
Actually the usual definition requires only local comparisons;
however, (in the cases that we consider here) 
this turns out to be equivalent to our (global) definition; 
see \cite[\S~4.6.2]{BBI} and references therein.~\closeremark
\end{remark}

Spaces of curvature $\leq 0$ are also called 
{\em CAT(0)-spaces}. It is a standard fact that every complete CAT(0) space 
is a Busemann space (see \cite[p.~176]{BH} or \cite[Corollary 2.5]{Sturm}).

We will say that an uniquely geodesic space $H$ has 
\emph{bounded nonpositive curvature in the sense of Alexandrov} 
if it has curvature $\le 0$ and $\ge \kappa$ for some $\kappa \le 0$. 

\medskip

Now we have the following version of \cref{l.displ_estimate_proper}: 

\begin{lemma}\label{l.displ_estimate_nonproper}
Assume that $H$ is a symmetric space of bounded nonpositive curvature in the sense of Alexandrov. 
Let $J$ be a transvection that translates a geodesic $\gamma$ by length $b$. Then
\begin{equation}\label{e.displ_estimate_again}
\dist (J(q), q ) \le f\left( b, \dist(q, \gamma) \right) \text{ for every $q\in H$,}
\end{equation}
where $f \!: \R_+ \times \R_+ \to \R_+$ is a function that depends only on the space $H$,
and is monotonically increasing with respect to each variable.
\end{lemma}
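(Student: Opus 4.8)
The plan is to mirror the proof of \cref{l.displ_estimate_proper}, replacing the compactness argument (which fails when $H$ is infinite dimensional) by an explicit comparison with the model space $M_\kappa$. Concretely, I would set
\[
f(b,\ell) := \sup\bigl\{ \dist(J(q),q) : J \text{ a transvection translating some } \gamma \text{ by length } \le b,\ \dist(q,\gamma)\le \ell \bigr\}.
\]
Since the symmetries act transitively on $H$, this supremum is independent of basepoint, and it is monotone in each variable by construction; so the whole content is the \emph{finiteness} of $f$. What actually makes the estimate useful in \cref{t.closing_D} is the refinement that $f(b,\ell)\to 0$ as $b\to 0$ with $\ell$ fixed, and it is precisely this that forces the curvature hypotheses. (Note the upper bound alone already gives finiteness: projecting $q$ to its nearest point $q_0$ on $\gamma$, which exists uniquely because $H$ is CAT(0), and using $\dist(J(q_0),q_0)=b$ with the triangle inequality yields $\dist(J(q),q)\le b+2\dist(q,\gamma)$. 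This is finite and monotone but does not vanish as $b\to 0$, hence is too weak for the applications.)

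To obtain the sharp bound I would first pass to a reflection picture. Writing $J=\sigma_{p_2}\circ\sigma_{p_1}$ and applying the isometry $\sigma_{p_2}$ gives
\[
\dist(J(q),q)=\dist\bigl(\sigma_{p_1}(q),\sigma_{p_2}(q)\bigr),
\]
so $\dist(J(q),q)$ is the distance between the two reflections of $q$ through the centers $p_1,p_2\in\gamma$, which satisfy $\dist(p_1,p_2)=b/2$. I would take the centers adapted to $q$, namely $p_1=q_0$ the foot of the perpendicular from $q$ to $\gamma$ and $p_2\in\gamma$ at distance $b/2$ from $q_0$; this is legitimate because in a symmetric space the transvection along $\gamma$ by a fixed length does not depend on where its two centers sit on $\gamma$. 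The segments $[q,\sigma_{p_1}(q)]$ and $[q,\sigma_{p_2}(q)]$ then have lengths $2d$ and $2\dist(q,p_2)$ (with $d:=\dist(q,\gamma)$), pass through $p_1$ and $p_2$, and form at $q$ a geodesic hinge of opening angle $\angle_q(p_1,p_2)$.

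The estimate now comes from comparing this hinge with $M_\kappa$. Since $q_0$ is a foot, the angle at $q_0$ in $\triang(q,q_0,p_2)$ is a right angle; the CAT(0) bound then gives $\dist(q,p_2)\le\sqrt{d^2+(b/2)^2}$, and the comparison angle at $q$ is at most $\arctan\bigl((b/2)/d\bigr)$, so $\angle_q(p_1,p_2)\le\arctan\bigl((b/2)/d\bigr)$, which tends to $0$ as $b\to 0$. Using Toponogov's hinge comparison for the lower bound $\ge\kappa$ (for a hinge with prescribed two sides and included angle, the third side is at most its value in $M_\kappa$), the quantity $\dist(\sigma_{p_1}(q),\sigma_{p_2}(q))$ is bounded above by the length of the corresponding model hinge. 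Feeding in the two one-sided bounds above, I can control everything by an explicit function of $b$ and $d$ alone: the displacement $g_\kappa(b,d)$ of a transvection by $b$ on a point at distance $d$ from its axis in $M_\kappa$. This $g_\kappa$ is finite, monotone in each variable, and satisfies $g_\kappa(b,d)\to 0$ as $b\to 0$, so bounding the supremum above by $g_\kappa$ (equivalently, taking $f:=g_\kappa$) proves the lemma in its useful form.

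The main obstacle is making the two comparisons rigorous and compatible in the non-smooth, possibly infinite-dimensional Alexandrov setting. First, one must check that the global SSS comparisons of \cref{sss.alexandrov} do yield the hinge and angle comparisons in the directions used — an upper bound on the angle from curvature $\le 0$ and an upper bound on the third side from curvature $\ge\kappa$ — which is exactly where both curvature bounds are genuinely needed; the equivalence of global and local formulations (\cref{r.local_global}) and the standard theory in \cite{BBI,BH} should supply this. The more delicate point is the structural input that the transvection along $\gamma$ by a fixed length is independent of the placement of its two reflection centers: this is what lets me put a center at the foot $q_0$ and keep the hinge's sides close to $2d$ and its angle small. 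Without it, centers placed far along $\gamma$ from $q_0$ would make the naive hinge bound blow up, even though the true displacement depends only on $\dist(q,\gamma)$ — so this independence is implicitly needed even to match the form of \eqref{e.displ_estimate_again}. Establishing it from the bare axioms of a symmetric geodesic space, or extracting it from homogeneity of $H$, is the technical heart of the argument.
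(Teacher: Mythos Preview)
Your route is essentially the paper's --- place one reflection center at the foot $q_0$ of $q$ on $\gamma$, the other at distance $b/2$, and compare with a model space --- but you have the two curvature bounds swapped at the decisive step, and this is a genuine gap as written.

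The assertion ``the CAT(0) bound then gives $\dist(q,p_2)\le\sqrt{d^2+(b/2)^2}$'' is false in direction. Nonpositive curvature makes the side opposite a given hinge \emph{longer} than its Euclidean value, so CAT(0) (together with the right angle at the foot) yields $\dist(q,p_2)\ge\sqrt{d^2+(b/2)^2}$. The \emph{upper} bound on $\dist(q,p_2)$ that you need must come from the \emph{lower} curvature bound $\ge\kappa$: an SAS comparison in $M_\kappa$ (this is \cref{l.SAS} in the paper) bounds the opposite side above by the $M_\kappa$-hypotenuse. Your angle estimate is likewise mis-attributed: the Euclidean SSS-comparison angle at $q$ is a \emph{decreasing} function of $\dist(q,p_2)$ beyond the Pythagorean value, so it is the \emph{lower} bound $\dist(q,p_2)\ge\sqrt{d^2+(b/2)^2}$ (from CAT(0)) that forces the comparison angle at $q$ to be at most $\arctan\!\bigl((b/2)/d\bigr)$, after which CAT(0) bounds the true angle by the comparison angle. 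With these roles corrected your hinge argument goes through: Toponogov in $M_\kappa$ with sides $2d$ and $2\dist(q,p_2)\le 2c_\kappa(b,d)$ and angle $\le\arctan\!\bigl((b/2)/d\bigr)$ bounds $\dist(\sigma_{q_0}(q),\sigma_{p_2}(q))$ by an explicit increasing function of $(b,d)$ that vanishes as $b\to 0$. (It is not literally the $M_\kappa$ displacement $g_\kappa(b,d)$, since your angle bound is Euclidean while your hinge lives in $M_\kappa$; but that is harmless for the lemma.)

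For comparison, the paper (Appendix~A, \cref{l.displ_estimate_again}) organizes the same ingredients differently: it applies the SAS comparison from the lower curvature bound to the right triangle $\triang\bigl(q_0,p_2,\sigma_{q_0}(q)\bigr)$, invokes CAT(0) only through the angle-sum inequality $\alpha+\beta\le\pi/2$ in that triangle, and then uses the hyperbolic law of cosines (again from the lower bound) on $\triang\bigl(q,\sigma_{q_0}(q),J(q)\bigr)$ to obtain an explicit inequality for $\cosh(\lambda s)$. Your hinge-at-$q$ version and the paper's hinge-at-$\sigma_{q_0}(q)$ version are two packagings of the same comparison.

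On the center-independence point you raise: you are right that it is needed, and the paper uses it too --- it silently writes $J=\sigma_{p_1}\circ\sigma_{p_0}$ with $p_0$ the foot of $q$ --- without a separate justification. So this is not a defect peculiar to your proposal.
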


We leave the proof of this lemma to Appendix~\ref{s.alexandrov}.

\begin{remark}\label{r.continuity_again}
Similarly to \cref{r.continuity}, we conclude that under the 
assumptions of \cref{l.displ_estimate_nonproper}, the map 
$p_0 \in H \mapsto \sigma_{p_0} \in \Isom(H)$ is continuous.~\closeremark 
\end{remark}

\subsubsection{Macroscopic uniform homogeneity}\label{sss.homog_macro}

Every symmetric space is homogeneous (in the 
sense that the group of isometries acts transitively).
We will need however a stronger property, given by the following lemma:

\begin{lemma}[Macroscopic uniform homogeneity]\label{l.homog_macro}
Assume that $H$ is 
\begin{itemize}
\item either a proper Busemann space;
\item or a space of bounded nonpositive curvature  in the sense of Alexandrov.
\end{itemize}
Also assume that $H$ is symmetric. Then there exists a continuous map 
$J  \!:  H \times H \to \Isom (H)$ with the following properties:
\begin{enumerate}
\item 
$J(p,q)p=q$ for all $p$, $q$ in $H$.
\item\label{i.convergence_m} 
$J(p,q)$ converges to the identity as the distance between $p$ and $q$ converges to zero.
\end{enumerate}
\end{lemma}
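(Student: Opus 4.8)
The plan is to build the map $J(p,q)$ out of a transvection that sends $p$ to $q$, using the displacement estimate of Lemma~\ref{l.displ_estimate_proper} (in the proper case) or Lemma~\ref{l.displ_estimate_nonproper} (in the bounded-curvature case) to control its behavior as $q \to p$. The natural candidate is the transvection along the geodesic from $p$ to $q$ that translates by exactly $\dist(p,q)$: concretely, letting $m = \mid(p,q)$ be the midpoint, set
\begin{equation*}
J(p,q) := \sigma_m \circ \sigma_p.
\end{equation*}
Since $\sigma_p$ fixes $p$ and $\sigma_m$ sends $p$ to its reflection across $m$ (which is $q$, as $m$ is the midpoint of $p$ and $q$), we get $J(p,q)p = q$, giving property~(a) immediately. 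This $J$ is a composition of two symmetries, and it translates the geodesic through $p$ and $q$ by length $b = 2\dist(p,m) = \dist(p,q)$.

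**For continuity**, I would invoke the hypothesis that $H$ is a symmetric geodesic space, so that $(p_0, p) \mapsto \sigma_{p_0}(p)$ is continuous, together with Remark~\ref{r.continuity} (resp.\ Remark~\ref{r.continuity_again}), which asserts that $p_0 \mapsto \sigma_{p_0}$ is continuous into $\Isom(H)$ with the bounded-open topology. The midpoint map $(p,q) \mapsto m = \mid(p,q)$ is continuous by the Busemann inequality~\eqref{e.media}. Composing, the map $(p,q) \mapsto \sigma_{\mid(p,q)} \circ \sigma_p$ is continuous into $\Isom(H)$, since composition of isometries is continuous for the bounded-open topology (uniform convergence on bounded sets is preserved under composition because isometries are $1$-Lipschitz and map bounded sets to bounded sets).

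**For property~(b)**, the key is to show $J(p,q) \to \id$ as $\dist(p,q) \to 0$, i.e., uniform convergence on every bounded set $B$. Fix such a $B$ and $q \in B$. The transvection $J(p,q)$ translates its geodesic $\gamma$ by $b = \dist(p,q)$, and Lemma~\ref{l.displ_estimate_proper} (resp.~\ref{l.displ_estimate_nonproper}) gives
\begin{equation*}
\dist(J(p,q)q', q') \le f\big(b,\, \dist(q', \gamma)\big)
\end{equation*}
for every $q'$, where $f$ is monotone increasing in each variable. For $q'$ ranging over the bounded set $B$ and $p$ close to a fixed point, $\dist(q',\gamma)$ stays bounded by some $R$ depending only on $B$, so $\dist(J(p,q)q', q') \le f(b, R)$. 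The remaining point is that $f(b, R) \to 0$ as $b \to 0$ for fixed $R$; this should follow because when $b = 0$ the transvection is the identity (which has zero displacement), and $f$ arises as a supremum of displacement functions $\dist(J q', q')$ over transvections of translation length at most $b$, which collapse continuously to the identity as $b \to 0$.

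**The main obstacle I anticipate** is establishing that $f(b, R) \to 0$ as $b \to 0$, since the definition of $f$ in the proofs of the two lemmas is as a supremum, and one must verify continuity of $f$ at $b = 0$ rather than merely monotonicity. In the proper case this follows cleanly from compactness: the supremum defining $\tilde f(p_0, b, \ell)$ is over a compact set of pairs $(J, q')$, and as $b \to 0$ the admissible transvections converge uniformly to $\id$ on the compact displacement region, forcing the supremum to zero. In the non-proper Alexandrov case, one cannot use compactness directly, so I would either appeal to the explicit comparison geometry underlying Lemma~\ref{l.displ_estimate_nonproper} (tracking how the comparison angles in the model space $M_\kappa$ shrink with $b$) or, more simply, observe that the whole argument only needs $f(b,R)\to 0$ as $b\to 0$, which can be read off from the construction of $f$ in Appendix~\ref{s.alexandrov} once that construction is in hand. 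This is the step where I would need to make full use of the curvature bounds.
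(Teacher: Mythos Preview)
Your construction has a genuine gap in the verification of property~(\ref{i.convergence_m}). The convergence required there is \emph{uniform in $p$ and $q$}: as the paper spells out immediately after the lemma, for every $\epsilon>0$ and every bounded $B\subset H$ there must exist $\delta>0$ such that $\dist(J(p,q)r,r)<\epsilon$ for all $r\in B$ whenever $\dist(p,q)<\delta$, with $p$ and $q$ \emph{not} restricted to a bounded set. Your transvection $J(p,q)=\sigma_{\mid(p,q)}\circ\sigma_p$ translates along the geodesic $\gamma$ through $p$ and $q$. When $p$ (and hence $\gamma$) goes off to infinity, the distance $\dist(r,\gamma)$ for $r$ in a fixed bounded set $B$ blows up, so the bound $f(b,\dist(r,\gamma))$ from \cref{l.displ_estimate_proper} or \cref{l.displ_estimate_nonproper} gives no uniform control. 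Your own phrase ``for $p$ close to a fixed point'' is precisely the assumption you are not allowed to make.

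The paper's remedy is to anchor the transvection at a \emph{fixed} basepoint $p_0$: set $J(p,q):=\sigma_m\circ\sigma_{p_0}$ where $m=\mid(\sigma_{p_0}(p),q)$. One still has $J(p,q)p=q$, and the Busemann inequality gives $\dist(p_0,m)\le\tfrac12\dist(p,q)$, so the translation length is again at most $\dist(p,q)$. The crucial gain is that the translated geodesic now passes through $p_0$ regardless of $p$ and $q$; hence $\dist(r,\gamma)\le\dist(r,p_0)$ is bounded uniformly for $r\in B$, and the displacement estimate does the job. Your concern about $f(b,R)\to 0$ as $b\to 0$ is legitimate but secondary; it is handled by compactness in the proper case and by the explicit formula \eqref{e.complicated} in the Alexandrov case.
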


More explicitly, assertion~(\ref{i.convergence_m}) means 
that for every $\epsilon > 0$ and each bounded subset $B \subset H$, there exists 
$\delta > 0$ such that $\dist(J(p,q)r,r) < \epsilon$ holds for all $r \in B$ 
whenever $\dist (p,q) < \delta$. 
(Notice that $p$ and $q$ are not restricted to a bounded set.)

\begin{proof}
Fix some $p_0 \in H$, and consider the transvection (see \cref{f.homog}):
\begin{equation}\label{e.homog}
J(p,q) := \sigma_m \circ \sigma_{p_0} \, ,
\quad \text{where $m$ is the midpoint between $\sigma_{p_0}(p)$ and $q$.}
\end{equation}

\psfrag{p}[r][r]{$p$}
\psfrag{q}[r][r]{$q$}
\psfrag{0}[l][l]{$p_0$}
\psfrag{b}[l][l]{$m$}
\psfrag{s}[l][l]{$\sigma_{p_0}(p)$}
\psfrag{1}[r][r]{$\sigma_{p_0}$}
\psfrag{2}[l][l]{$\sigma_{m}$}
\begin{figure}
\begin{center}
\includegraphics[width=.7\textwidth]{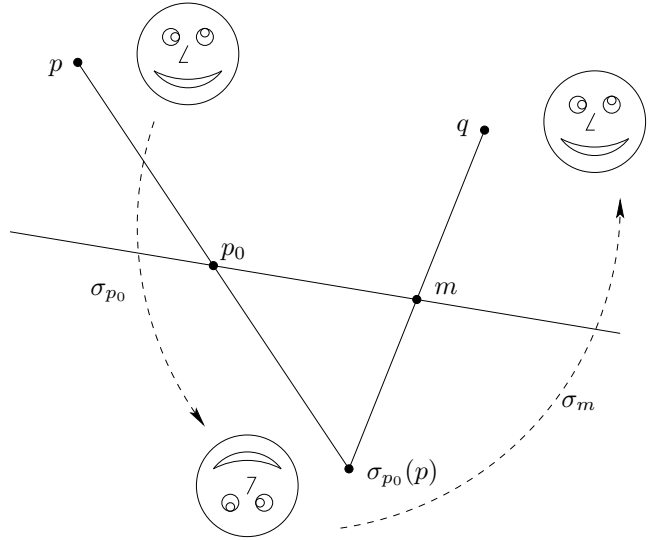}
\end{center}
\caption{$m$ is the midpoint between $q$ and $\sigma_{p_0}(p)$. 
The isometry $J(p,q) := \sigma_m \circ \sigma_{p_0}$ sends 
$p$ to $q$ and translates the geodesic joining $p_0$ and 
$m$ by length $2\dist(p_0,m) \le \dist(p,q)$.}
\label{f.homog}
\end{figure}

Applying the Busemann inequality \eqref{e.media}
to the points $\sigma_{p_0}(p)$, $p$ and $q$, we obtain
$\dist(p_0, m) \le \tfrac{1}{2} \dist(p,q)$. Therefore, the length 
by which the transvection $J(p,q)$ translates $\gamma$ is at most 
$\dist(p,q)$. So assertion~(\ref{i.convergence_m}) follows directly from 
\cref{l.displ_estimate_proper} or \cref{l.displ_estimate_nonproper},
according to the case.
\end{proof}

\begin{remark}
In the case where $H$ is the hyperbolic plane, 
\cref{l.homog_macro} follows from Lemma~5 from \cite{ABD1}. 
Although the construction presented therein 
is specific to the hyperbolic plane,
it actually produces the same isometries as our formula \eqref{e.homog} 
in this particular case. 
\closeremark 
\end{remark}

\begin{remark}
Despite the fact that the perturbative argument that appears in \cite{BN_elementary}
is elementary and does not allude to any geometry,
it is actually the construction above specialized to $H = \GL(d,\R)/\mathrm{O}(d)$.~\closeremark
\end{remark}

\subsection[Proof of \cref{t.closing_D}]{Creating invariant sections: 
proof of \cref{t.closing_D}}\label{ss.proof_closing_D}

\begin{proof}[Proof of \cref{t.closing_D}] 
By \cref{t.slow_D}, there exists a sequence of sections $\phi_N$
such that $\lim_{N \to \infty} \displ(\phi_N) = 0$. Define
\begin{equation}\label{e.closing_D}
\tilde A_N(\omega) = 
J \big( A(\omega)\phi_N(\omega),\phi_N(F\omega) \big) \circ A(\omega),
\end{equation}
where $J$ is given by \cref{l.homog_macro}.
Then:
\begin{itemize}
\item $\tilde A_N(\omega)  \phi_N(\omega) = \phi_N(F \omega)$, 
that is, $\phi_N$ is $\tilde A_N$-invariant.
\item for each bounded subset $B$ of $H$,
the sequence $\tilde A_N (\omega)  p$ converges to 
$A(\omega) p$ uniformly with respect to $(\omega,p) \in \Omega \times B$.
\end{itemize}
This shows the theorem except for the claim concerning the cohomologous cocycle. 
To prove this last issue, take any point $p_0 \in H$ and consider the cocycle  
$B_N(\omega) = U(F \omega)^{-1} \circ A_N(\omega) \circ U(\omega)$,
where $U(\omega) = J(p_0, \phi_N(\omega))$. Then $B_N$ is cohomologous 
to $A_N$ and takes values in the stabilizer of $p_0$, as desired.
\end{proof}

\begin{remark}\label{r.non-perturbative}
A ``non-perturbative'' version of \cref{t.closing_D} goes as follows:
\emph{Under the hypotheses of \cref{t.closing_D}, it follows that $A$ is cohomologous to cocycles arbitrarily close to cocycles taking values in stabilizers of points.} 

Indeed, by \cref{t.slow_D}, there exists a sequence of sections $\phi_N$ such that $\lim_{N \to \infty} \displ(\phi_N) = 0$. Take any point 
$p_0 \in H$ and consider the cocycle $B_N(\omega) = U(F \omega)^{-1} \circ A (\omega) \circ U(\omega)$, where 
$U(\omega) = J(p_0, \phi_N(\omega))$
and $J$ is provided by \cref{l.homog_macro}.
If $N$ is large then 
\begin{multline*}
d \big( B_N (\omega) p_0, p_0 \big) 
= d \big( A(\omega) \circ  U(\omega) p_0, U (F(\omega)) p_0 \big) \\
= d \big( A(\omega) \varphi_N (\omega), \varphi_N (\omega) \big) 
\le \displ(\varphi_N)
\end{multline*}
is small.
Thus $B_N$ is a cocycle cohomologous to $A$ close to a cocycle taking values in the stabilizer of~$p_0$.~\closeremark
\end{remark}

\begin{remark}\label{r.access}
Let us see how to obtain certain ``accessibility'' properties, which play 
an important role for the particular case treated in \cite{ABD2}.
First, by \cref{r.family} we can find a continuous family $\{\phi_t\}_{t\in \R}$ of sections
such that $\displ_{F,A}(\phi_t) \to 0$ as $t \to + \infty$.
Repeating the construction of the proof of \cref{t.closing_D}, 
we conclude the following: \emph{For every cocycle $A$ with uniform sublinear drift, 
there exists a continuous family of cocycles $\{A_t\}_{t\in [0,\infty]}$, satisfying 
$A_{\infty} = A$ and such that for each $t<\infty$, $A_t$ has a continuous invariant 
section $\phi_t$ (that also depends continuously on $t$). Moreover, such correspondence  
is continuous: given a continuous family $A(s)$ of cocycles ($s$ in an arbitrary 
topological space), the resulting $A_t(s)$ and $\phi_t(s)$ are jointly continuous.}~\closeremark
\end{remark}

\begin{remark}\label{r.fibered_D}
Replace $\Omega \times H$ by a fiber bundle $\Sigma$ with base space $\Omega$, fiber $H$,
and structural group $\Isom(H)$.
Then the mappings $\Sigma \to \Sigma$ that preserve the bundle structure and 
project over $F$ play the role of the cocycles of isometries.
Fibered versions of \cref{t.slow_D,t.closing_D} actually hold.
The proofs are basically the same, replacing the basepoint $p_0$
that appears (explicitly or implicitly) in the fundamental formulas
\eqref{e.drift_D}, \eqref{e.magic2}, \eqref{e.homog}, and \eqref{e.closing_D}
by any continuous section $\Omega \to H$.~\closeremark
\end{remark}

\subsection{Application to matrix cocycles}\label{ss.matrix}

In the proof below, we use some geometrical facts
that can be found in Chapter~II.10 (especially p.~328--329) of \cite{BH};
see also Chapter~XII of \cite{Lang}.

\begin{proof}[Proof of \cref{t.matrix}]
Let $G$ be an algebraic subgroup of $\GL(d,\R)$ that is closed under matrix transposition,
and let $K = G \cap \mathrm{O}(n)$.
Consider the action of $G$ on the space $H:=G/K$ of left cosets.
Then we can metrize $H$ so the action becomes isometric, 
and moreover $H$ becomes a symmetric Busemann space.
Actually, for each $g \in G$, the distance between the cosets $gK$ and $K$ is 
$\left( \sum (\log \sigma_i)^2 \right)^{1/2}$, where $\sigma_1$, \dots, $\sigma_d$
are the singular values of the matrix $g$.

Now let $A \!: \Omega \to G$ be a cocycle, and let $[A] \!: \Omega \to \Isom(H)$
be the induced cocycle of isometries.
Assume that $A$ has uniform subexponential growth.
It follows the distance formula above that $[A]$ has uniform subexponential drift. 

By \cref{t.closing_D}, there is a perturbation of $[A]$ that has an invariant section 
$\phi \!: \Omega \to H$.
Actually, this perturbation is obtained by composition with transvections
(recall \eqref{e.closing_D} and \eqref{e.homog}),
which are induced by elements of $G$ (see \cite[Lemma~1, p.~235]{KN}).
So the perturbed cocycle of isometries is induced by a perturbation $\tilde A$ of the original $G$-cocycle.

Choose (e.g., using \cref{l.homog_macro}) a continuous map $U \!: \Omega \to G$ such that 
for each $\omega \in \Omega$, the coset containing $U(\omega)$ is precisely $\phi(\omega)$.
Then $U$ is a conjugacy between $\tilde A$ and a $K$-valued cocycle,
as desired.
\end{proof}

\begin{remark}
It is actually possible to state \cref{t.matrix} in a Lie group setting,
and prove it using e.g.~\cite[Thrm.~8.6(2), p.~256]{KN}.
We preferred, however, to keep the statements simpler,
relying only on more elementary results as those from 
\cite{BH} or \cite{Lang}.~\closeremark
\end{remark}

\begin{remark}\label{r.infty_dim}
Let $\mathcal{H}$ be an infinite-dimensional separable real Hilbert space.
Let $\GL^2(\infty, \R)$ be the group of all invertible operators on 
$\mathcal{H}$ that may be written in the form $\Id + L$, where 
$L$ is a Hilbert--Schmidt operator.
Let $\mathrm{O}^2(\infty)$ be the orthogonal subgroup of $\GL^2(\infty, \R)$. 
Then $H := \GL^2(\infty, \R) / \mathrm{O}^2(\infty)$ 
can be given a structure of symmetric Cartan--Hadamard manifold
on which $\GL^2(\infty,\R)$ acts by isometries;
see \cite{Larotonda}. 
In particular, $H$ has bounded nonpositive curvature in the sense of Alexandrov
(see \cite[Section 7]{KarM} for more on this; see also \cref{r.symm_bound,r.sectional}).
Hence \cref{t.slow_D,t.closing_D} apply to this space. 
In particular, \cref{t.matrix} extends to cocycles of this kind of operators.~\closeremark
\end{remark}

\begin{remark}
Consider now the space $H = \GL(\infty,\R)/\mathrm{O}(\infty)$, where $\GL(\infty,\R)$ is the group of all 
bounded invertible operators on $\mathcal{H}$, and $\mathrm{O}(\infty)$ is the orthogonal subgroup. 
It is possible (see \cite{CPR,LL}) to metrize $H$ so it becomes a Busemann space,
besides being a symmetric space and a Banach manifold;
however the resulting space is not CAT(0).
Therefore \cref{t.slow_D} applies to the space $H$.
However, we do not know whether \cref{t.closing_D} applies to this space,
or whether \cref{t.matrix} applies to $\GL(\infty,\R)$-cocycles.~\closeremark 
\end{remark}	

\section{Cocycles over other group actions} \label{s.proofs_G}

We now consider other (still discrete) group actions.
Before going into the proofs of our results, let us 
make an observation about the construction of almost invariant sections.

\medskip

Let $\Gamma$ be a (non necessarily abelian) group acting by homeomorphisms of a 
compact Hausdorff metric space $\Omega$. 
Let $A$ be a cocycle over this group action 
with values in the group of isometries of a Busemann space $H$.

Suppose that $C_N$ is a sequence of finite subsets of $\Gamma$.
Fix $p_0 \in \Omega$ and define a sequence of sections $\varphi_N \!: \Omega \to H$ by 
\begin{equation}\label{e.magic_group}
\varphi_N (\omega) := 
\bary \Big( \big(A^{(h)} (\omega)\big)^{-1} p_0 \!: h \in C_N \Big),
\end{equation}
where $\bary$ is provided by \cref{t.bary} 
(compare with \eqref{e.magic2}).
Now fix any $g \in \Gamma$.
By equivariance of the barycenter,
\begin{equation}\label{e.image}
A^{(g)} (\omega) \varphi_N (\omega) := 
\bary \Big( \big(A^{(h)} (g\omega)\big)^{-1} p_0 \!: h \in C_N \cdot g^{-1} \Big),
\end{equation}
By property \eqref{e.bar_lips_D} of the barycenter map, we have
\begin{align}
d \big( A^{(g)} (\omega) \varphi_N (\omega), \varphi_N (g \omega) \big) 
&\le \frac{n}{|C_N|}
\max_j d \Big( \big(A^{(h_j)} (g\omega)\big)^{-1} p_0 , \big(A^{(h_j')} (g\omega)\big)^{-1} p_0  \Big) \notag \\
&\le \frac{n}{|C_N|}
\max_j d \Big( A^{(h_j' h_j^{-1})} (h_j g\omega) p_0 ,  p_0  \Big),
\label{e.group_estimate}
\end{align}
where $n$ is the cardinality of the union of 
$(C_N \cdot g^{-1}) \setminus C_N$ and $C_N \setminus (C_N \cdot g^{-1})$, 
which are enumerated as 
$\{ h_1, \dots, h_n\}$ and $\{ h_1', \dots, h_n'\}$, respectively.

\subsection{Proof of \cref{t.slow_G}}

\begin{proof}[Proof of Theorem~\ref{t.slow_G}]
We first consider the case where $\Gamma = \mathbb{Z}^d$. Let  
$A \! : \Omega \times \mathbb{Z}^d \to \Isom (H)$ \hspace{0.01cm} 
be a cocycle of isometries of a Busemann space $H$. 
Assume that $A$ has uniform sublinear growth along cyclic subgroups.
We need to exhibit a sequence of continuous maps 
$\varphi_N \!: \Omega \to H$ such that
for all $i = 1,\ldots,d$, 
\begin{equation}\label{e.al-inv}
\lim_{N\to \infty} d \big( A^{(e_i)} (\omega) \varphi_N (\omega), \varphi_N (e_i \omega) \big) 
= 0 \quad \text{uniformly on $\omega \in \Omega$},
\end{equation}
where $e_i := (0,\ldots,0,1,0,\ldots,0)$ is the $i^\text{th}$ canonical generator of $\mathbb{Z}^d$. 

To do this, consider the sequence of ``cubes''
$$
C_N := \big\{ (m_1, \dots, m_d) \in \Z^d \!: 0 \leq m_j < N \big\},
$$
and define $\varphi_N$ by \eqref{e.magic_group}.
The sets $(C_N - e_i) \setminus C_N = \{h_j\}$ and 
$C_N \setminus (C_N - e_i)$ have cardinality $n = N^{d-1} = |C_N|/N$.
Moreover, they can be enumerated respectively as  
$\{ h_1, \dots, h_n\}$ and $\{ h_1', \dots, h_n'\}$
in a way such that $h_j' - h_j = Ne_i$.
Then \eqref{e.group_estimate} gives 
$$
d \big( A^{(e_i)} (\omega) \varphi_N (\omega), \varphi_N (e_i + \omega) \big) 
\le \frac{2}{N}
\max_j d \Big( A^{(N e_i)} ((h_j + e_i ) \omega) p_0 ,  p_0  \Big),
$$
Since $A$ has sublinear drift along the cyclic subgroup generated by $e_i$,
\eqref{e.al-inv} follows.
This proves the theorem in the case $\Gamma = \Z^d$.

\medskip

Now consider the general case where $\Gamma$ is finitely generated and abelian.
Let $\Gamma = \Gamma_0 \oplus \mathbb{Z}^d$ 
be the torsion decomposition, where $\Gamma_0$ is the torsion subgroup.
Consider the sequence of sets
$$
C_N := \Gamma_0 \oplus \big\{ (m_1, \dots, m_d) \in \Z^d \!: 0 \leq m_j < N \big\},
$$
and define $\varphi_N$ by \eqref{e.magic_group}.
If $g \in \Gamma_0$ then $C_N - g = C_N$, and thus
\eqref{e.image} gives 
$A^{(g)} (\omega) \varphi_N (\omega) =  \varphi_N (g \omega)$.
On the other hand, if $g=e_i$ then
we can estimate as before 
$d \big( A^{(e_i)} (\omega) \varphi_N (\omega), \varphi_N (e_i \omega) \big) = o(N)$.
So $\varphi_N$ is a sequence of almost-invariant sections,
as we wanted.
\end{proof}


\subsection{Generalization to virtually nilpotent group actions}

We close this section with a further generalization of \cref{t.slow_G} 
for cocycles over virtually nilpotent group actions. 

Let $\Gamma$ be a finitely generated group acting on a compact space $\Omega$,
and let $A$ be a cocycle of isometries of a space $(H,d)$ over this action.
We say that $A$ has \emph{uniform sublinear drift}
if for each fixed $p_0 \in H$, 
$$
\sup_{\omega \in \Omega} d (A^{(g)} (\omega) p_0, p_0) = o(\ell(g)),
$$
where $\ell$ denotes word length with respect to some 
finite system of generators. 

\begin{theorem} \label{t.slow_nilp} 
Let $\Gamma$ be a finitely generated virtually nilpotent group acting by homeomorphisms of a 
compact Hausdorff metric space $\Omega$. 
Let $A$ be a cocycle over this group action 
with values in the group of isometries of a Busemann space $H$. 
If $A$ has sublinear drift 
then $A$ admits almost-invariant sections.
\end{theorem}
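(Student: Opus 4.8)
The plan is to reduce the virtually nilpotent case to the nilpotent case, and then prove the nilpotent case by choosing the averaging sets $C_N$ in \eqref{e.magic_group} to be balls (or boxes) in a suitable Cayley graph, so that the boundary-to-volume ratio tends to zero. The key analytic input is the estimate \eqref{e.group_estimate}, which bounds the displacement $d(A^{(g)}(\omega)\varphi_N(\omega), \varphi_N(g\omega))$ by $\tfrac{n}{|C_N|}$ times a maximum of terms of the form $d(A^{(h_j' h_j^{-1})}(h_j g\omega)p_0, p_0)$, where $n = |(C_N g^{-1}) \triangle C_N|$ (the symmetric difference). The whole argument hinges on controlling two competing quantities: the size $n$ of the boundary relative to $|C_N|$, and the word length $\ell(h_j' h_j^{-1})$ of the ``difference'' group elements appearing inside the displacement.

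First I would invoke Gromov's theorem (or rather its elementary consequences for a fixed virtually nilpotent group): a finitely generated virtually nilpotent group has polynomial growth, say $|B_R| \sim R^{D}$ for the balls $B_R$ of radius $R$ in the word metric, where $D$ is the homogeneous dimension. Taking $C_N = B_N$, one has the Følner-type property that $|B_{N} g^{-1} \triangle B_N| = o(|B_N|)$ for each fixed $g$; more quantitatively, right-translating by a generator moves each element a bounded word-distance, so the symmetric difference is contained in an annulus $B_{N+c} \setminus B_{N-c}$, whose cardinality is $O(N^{D-1}) = o(N^{D}) = o(|B_N|)$. This gives $\tfrac{n}{|C_N|} \to 0$. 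The crucial second point is that the elements $h_j' h_j^{-1}$ arising as differences between matched boundary points can be taken to have \emph{bounded} word length (they correspond to translating by the fixed generator $g$, up to bounded correction), so the displacement terms $d(A^{(h_j' h_j^{-1})}(\cdot)p_0, p_0)$ are \emph{uniformly bounded} over all $N$ by the sublinear-drift hypothesis applied to a fixed finite set of group elements. Hence the product $\tfrac{n}{|C_N|}\cdot O(1) \to 0$, which is exactly the almost-invariance \eqref{e.al-inv}.

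I expect the main obstacle to be the combinatorial bookkeeping behind the matching: in the abelian proof above, the boundary of a box admits an obvious bijection $h_j \mapsto h_j' = h_j + Ne_i$ pairing the two pieces of the symmetric difference, and crucially the difference $h_j' - h_j = Ne_i$ has \emph{large} word length $N$ — yet there it is compensated because one estimates via $A^{(Ne_i)}$ and uses sublinear drift \emph{along the cyclic subgroup}. For a general nilpotent group, translation by $g$ on the right does not admit such a clean pairing, and one must instead produce a bijection between $(C_N g^{-1}) \setminus C_N$ and $C_N \setminus (C_N g^{-1})$ for which the matched difference elements have controlled length. The cleanest route is to match each $h \in (C_N g^{-1}) \setminus C_N$ with $hg \in C_N \setminus (C_N g^{-1})$, so that $h_j' h_j^{-1} = g$ is literally the \emph{fixed} generator, of word length one; then the displacement terms are bounded by $\sup_\omega d(A^{(g)}(\omega)p_0, p_0)$, a single finite constant. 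With this matching the estimate becomes transparent, and the only remaining work is the Følner bound $\tfrac{n}{|C_N|}\to 0$, which is precisely polynomial growth. Thus the virtually nilpotent hypothesis enters exactly twice: once to guarantee polynomial (hence subexponential, Følner) growth of balls, and once through the sublinear-drift hypothesis on the generators.

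Finally, to pass from genuinely nilpotent to \emph{virtually} nilpotent $\Gamma$, I would remark that a finite-index nilpotent subgroup $\Gamma_0 \le \Gamma$ shares the polynomial-growth and Følner properties of $\Gamma$ (indeed $\Gamma$ itself has polynomial growth by Gromov's theorem, so one need not even descend to $\Gamma_0$, and the ball averaging works directly in $\Gamma$). The sublinear-drift hypothesis is stated for the word length $\ell$ on $\Gamma$ itself, so no transfer between the subgroup and the whole group is needed; the averaging construction \eqref{e.magic_group} with $C_N = B_N \subset \Gamma$ goes through verbatim. Combining the Følner bound with the bounded-difference matching yields, for every generator $g$, the limit \eqref{e.al-inv} uniformly in $\omega$, and since almost-invariance over a generating set implies almost-invariance over all of $\Gamma$ (by a telescoping argument analogous to \eqref{e.group_estimate}), the proof is complete.
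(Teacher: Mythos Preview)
Your proposed matching $h \mapsto hg$ does not give a bijection between $(C_N g^{-1}) \setminus C_N$ and $C_N \setminus (C_N g^{-1})$. If $h \in (C_N g^{-1}) \setminus C_N$ then indeed $hg \in C_N$, but there is no reason for $hg$ to lie outside $C_N g^{-1}$, i.e., for $hg^2 \notin C_N$. Already for $\Gamma = \mathbb{Z}$, $C_N = \{0,\dots,N-1\}$, $g=1$: the two boundary pieces are $\{-1\}$ and $\{N-1\}$, and your map sends $-1$ to $0$, which is in the common part. More to the point, \emph{no} bijection between these boundary pieces can have uniformly bounded $h_j' h_j^{-1}$: in $\mathbb{Z}^d$ with $C_N$ a cube and $g=e_i$, the two pieces lie on opposite faces, so any matching forces $\ell(h_j' - h_j) \ge N$ --- exactly as you yourself observed for the abelian proof. (There is also a group-theoretic slip: even if $h_j' = h_j g$ were valid, $h_j' h_j^{-1} = h_j g h_j^{-1}$ is a conjugate of $g$, not $g$ itself; in a nonabelian nilpotent group such conjugates need not have bounded word length as $h_j$ ranges over $B_N$.)

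The paper's proof accepts that the differences $h_j' h_j^{-1}$ have length of order the radius: one matches the boundary pieces arbitrarily, so $\ell(h_j' h_j^{-1}) = O(k_N)$, and then the \emph{full} sublinear-drift hypothesis (not just drift along generators) gives $\max_j d(A^{(h_j' h_j^{-1})}(\cdot)p_0, p_0) = o(k_N)$. For the product with $n/|C_N|$ to tend to zero one then needs $n/|C_N| = O(1/k_N)$, not merely $o(1)$. Your annulus estimate $|B_{N+c}\setminus B_{N-c}| = O(N^{D-1})$ would deliver this, but it is not an immediate consequence of polynomial growth $|B_N|\asymp N^D$ (the error term in Pansu's asymptotic is a delicate issue). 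The paper sidesteps this with a pigeonhole: if $|B(k+1)\setminus B(k)| > (D/k)|B(k)|$ held for every $k$ then $|B(k)|$ would grow faster than any polynomial; hence along some subsequence $k_N$ one has $|B(k_N+1)\setminus B(k_N)| \le (D/k_N)|B(k_N)|$, and one sets $C_N = B(k_N)$.
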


\begin{proof} 
We follow an argument of \cite{CTV}. Since $\Gamma$ is virtually nilpotent, it has polynomial growth 
(with respect to any finite system of generators). 
Denoting by $B(n)$ the ball of radius $n$ in $\Gamma$, 
we claim that there exist $D > 0$ and an increasing sequence of integers $k_N$ such that for all $N$:
\begin{equation}\label{e.nilp-drift}
\frac{ \big| B(k_N + 1) \setminus B(k_N) \big|}{\big| B(k_N) \big|} \leq \frac{D}{k_N}.
\end{equation}
Otherwise, for each $D > 0$ there would exist positive constants $C$, $C'$, $C''$ 
such that
$$
|B(k)| \geq
C \prod_{j=1}^{k-1} \left( 1 + \frac{D}{j} \right) 
\geq C' \exp \left( \sum_{j=1}^{k-1} \frac{D}{j} \right) 
\geq C'' k^D,
$$
thus contradicting polynomial growth.

Now fix $p_0 \in \Omega$, let $C_N := B(k_N)$,
and define $\varphi_N \!: \Omega \to H$ by \eqref{e.magic_group}.
Let $g$ be a generator of $\Gamma$.
Then, by \eqref{e.nilp-drift},
$$
n:=|(C_N \cdot g^{-1}) \setminus C_N| \le | B(k_N + 1) \setminus B(k_N) | \leq \frac{D}{k_N} |C_N|.
$$
Therefore, \eqref{e.group_estimate} gives
$$
\sup_{\omega \in \Omega} d \big( A^{(g)} (\omega) \varphi_N (\omega), \varphi_N (g \omega) \big) 
\le \frac{D}{k_N}
\sup_{\omega \in \Omega}  \max_{h \in B(2k_N + 1)}  d \big( A^{(h)} (\omega) p_0 ,  p_0  \big),
$$
which converges to $0$ as $N \to \infty$.
We conclude that $\varphi_N$ is a sequence of almost-invariant sections. 
\end{proof}

Together with \cref{t.slow_nilp}, the next general Proposition shows that Theorem E extends to 
virtually nilpotent groups. 

\vspace{0.1cm}

\begin{proposition} Let $A$ be a cocycle of isometries of an space $H$ over a group action 
by homeomorhisms on a space $\Omega$. If $\Gamma$ is virtually nilpotent, then $A$ 
has zero drift along cyclic subgroups if and only if it has uniform sublinear growth.
\end{proposition}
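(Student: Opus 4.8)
The plan is to encode both conditions in the single function
\[
M(g) := \sup_{\omega \in \Omega} d\big(A^{(g)}(\omega) p_0, p_0\big), \qquad g \in \Gamma,
\]
and then exploit its subadditivity. Using the cocycle relation $A^{(gh)}(\omega) = A^{(g)}(h\omega)A^{(h)}(\omega)$ and the fact that $A^{(g)}(h\omega)$ is an isometry, the triangle inequality gives
\[
d\big(A^{(gh)}(\omega)p_0,p_0\big) \le d\big(A^{(h)}(\omega)p_0,p_0\big) + d\big(A^{(g)}(h\omega)p_0,p_0\big);
\]
taking the supremum over $\omega$, and noting that $h\omega$ ranges over all of $\Omega$, yields $M(gh)\le M(g)+M(h)$. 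Likewise $A^{(g^{-1})}(\omega)=\big[A^{(g)}(g^{-1}\omega)\big]^{-1}$ gives $M(g^{-1})=M(g)$, and $M(e)=0$. In this language, uniform sublinear growth is $M(g)=o(\ell(g))$ and zero drift along cyclic subgroups is $M(g^n)/n\to 0$ for every $g\in\Gamma$. The implication from the former to the latter needs no assumption on $\Gamma$: for $g$ of infinite order (the finite-order case being trivial) $\ell(g^n)\to\infty$, so given $\epsilon>0$ and $L$ with $M(h)\le\epsilon\,\ell(h)$ for $\ell(h)\ge L$, the bound $\ell(g^n)\le n\,\ell(g)$ gives $\tfrac1n M(g^n)\le\epsilon\,\ell(g)$ for all large $n$; letting $n\to\infty$ and then $\epsilon\to0$ gives the claim.

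For the converse, where virtual nilpotency is essential, I would first reduce to the nilpotent case. Let $N$ be a finite-index nilpotent subgroup; the inclusion $N\hookrightarrow\Gamma$ is a quasi-isometry, so the word metrics are comparable, and writing $g=nt$ with $n\in N$ and $t$ in a fixed finite transversal, subadditivity gives $M(g)\le M(n)+\max_t M(t)$. Since the additive constant is fixed and the two word metrics are comparable, uniform sublinearity on $\Gamma$ follows from that on $N$, while the cyclic hypothesis passes verbatim to $N$. Thus it suffices to treat $\Gamma$ finitely generated nilpotent, of class $c$, with generators $s_1,\dots,s_r$ and lower central series $\Gamma=\Gamma_1\supseteq\Gamma_2\supseteq\cdots\supseteq\Gamma_{c+1}=\{e\}$.

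Here the key point is the distortion of deep elements. I would fix a Mal'cev basis $u_1,\dots,u_k$ adapted to the filtration, with $u_j\in\Gamma_{w_j}\setminus\Gamma_{w_j+1}$ a nested commutator of weight $w_j$ in the generators, so that every $g$ has a normal form $g=u_1^{e_1}\cdots u_k^{e_k}$ whose word length, by the Guivarc'h--Bass formula, is comparable to $\sum_j|e_j|^{1/w_j}$. Commutator identities allow one to write $u_j^{m^{w_j}}$, modulo an error in $\Gamma_{w_j+1}$, as a product of boundedly many factors $s_i^{\pm m}$; combined with subadditivity and the cyclic-sublinearity $M(s_i^m)=o(m)$ of the generators, this should yield, by induction on the weight,
\[
M\big(u_j^{N}\big) = o\big(N^{1/w_j}\big).
\]
Then subadditivity and the normal form give $M(g)\le\sum_j M(u_j^{e_j})=\sum_j o(|e_j|^{1/w_j})$, and the same ``finite sum of sublinear terms'' estimate used in the abelian case --- where the finiteness of the basis lets the bounded remainders be absorbed --- upgrades this to $o\big(\sum_j|e_j|^{1/w_j}\big)=o(\ell(g))$, completing the proof.

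I expect the weighted bound $M(u_j^N)=o(N^{1/w_j})$ to be the main obstacle. The commutator identity only produces $u_j^{m^{w_j}}$ along the sparse set of perfect $w_j$-th powers, so the intermediate exponents must be handled by an iterative remainder argument: in the Heisenberg prototype one has $c^{n^2}=[a^n,b^n]$, whence $M(c^{n^2})\le 2M(a^n)+2M(b^n)=o(n)$, and writing a general $k$ as $n^2+r$ and recursing on $r$ turns the naive $O(\sqrt k)$ estimate into the required $o(\sqrt k)$. One must simultaneously control the error terms in $\Gamma_{w_j+1}$, which is exactly what the induction on weight is for, and keep all the resulting $o(\cdot)$ bounds uniform over the finitely many basis elements.
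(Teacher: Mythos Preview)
Your argument is correct, and the overall architecture---encoding both conditions via the subadditive function $M(g)$, dispatching the easy direction without any hypothesis on $\Gamma$, and reducing the hard direction to the nilpotent case---matches the paper's. The difference lies in how the hard direction is finished.

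The paper does not go through Mal'cev coordinates and distortion. Instead it quotes a single structural fact (from Breuillard--Green): in a torsion-free nilpotent group there is a generating set $\{h_1,\dots,h_k\}$ and a constant $C$ such that \emph{every} element $h$ can be written as $h=h_{i_1}^{n_1}\cdots h_{i_m}^{n_m}$ with $m\le C$ and $|n_j|\le C\,\ell(h)$. Subadditivity of $M$ then gives
\[
M(h)\ \le\ \sum_{j=1}^{m} M\big(h_{i_j}^{n_j}\big),
\]
and since there are at most $C$ summands, each $o(|n_j|)=o(\ell(h))$ by the cyclic-drift hypothesis, one gets $M(h)=o(\ell(h))$ in one line.

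Your route via the Bass--Guivarc'h normal form $g=u_1^{e_1}\cdots u_k^{e_k}$ with $\ell(g)\asymp\sum_j|e_j|^{1/w_j}$ runs into the issue you correctly identify: the naive bound $M(u_j^{e_j})=o(|e_j|)$ is too weak when $w_j>1$, so you need the sharpened estimate $M(u_j^N)=o(N^{1/w_j})$, obtained by expressing $u_j^{m^{w_j}}$ as a bounded product of generator powers $s_i^{\pm m}$ modulo higher-weight errors, and then filling in the non-power exponents by a remainder recursion. This works, and your Heisenberg sketch (including the recursion $k\mapsto k-\lfloor\sqrt{k}\rfloor^2$) is sound; but what you are effectively doing is reproving, by hand, the bounded-generation fact that the paper simply cites. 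So your approach is more self-contained but longer, while the paper's is a one-line appeal to the literature.
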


\begin{proof} 
Without loss of generality, we may assume that $\Gamma$ is torion-free and nilpotent. 
As it is well-known, such a group is boundedly generated in a strong form: there exists a 
generating system $\mathcal{G} = \{  h_1,\ldots, h_k \}$ and a constant $C$ such that every 
element $h \in \Gamma$ writes as $h = h_{i_1}^{n_1} \cdots h_{i_m}^{n_m}$, where each 
$h_{i_j}$ belongs to $\mathcal{G}$, $m \leq C$ and $|i_j| \leq C \ell (h)$. (In the torsion-free case, 
this follows, for instance, from \cite[Appendix B]{Breuillard-Green}.) 
Using this fact, the direct implication follows easily. The converse is 
straightforward and we leave it to the reader. 
\end{proof}

\begin{remark} For the case where $H$ is the real line and the cocycle is by translations, 
this yields an alternative (and simpler) proof of \cite[Th\'eor\`eme 2]{MOP2}.~\closeremark
\end{remark}

\begin{remark}
We do not know whether \cref{t.closing_D} may also be extended to (finitely generated) 
abelian or virtually nilpotent group actions.
The difficulty in adapting the proof is that the 
group relations must be preserved.
Of course, if we consider $\Gamma$ as a quotient of the free group
$\mathbb{F}_k$, where $k$ is the number of generators, 
and the action is lifted to  $\mathbb{F}_k$,
then the cocycle can be perturbed (as a cocycle above the $\mathbb{F}_k$-action) 
so that it has a continuous invariant section.
\closeremark
\end{remark}
%
%
%

\section{Continuous-time cocycles} \label{s.proofs_C}

In this section we prove the continuous-time \cref{t.slow_C,t.closing_C}.

\subsection{Preliminaries}\label{ss.prelim_C}

\subsubsection{Cartan--Hadamard manifolds}\label{sss.cartan-h}

Assume $H$ is a Hilbert-manifold, that is, a 
separable $C^\infty$-manifold modeled on a separable real Hilbert space 
$(\mathcal{H},\langle \mathord{\cdot},\mathord{\cdot} \rangle)$. Fix 
a Riemannian metric on $H$. (See \cite{Lang} for the precise definition.)

If $H$ is complete, simply connected, and has nonpositive sectional curvature,
then $H$ is called a \emph{Cartan--Hadamard manifold}. In this case, 
the Cartan--Hadamard--McAlpin Theorem (see \cite[\S~IX.3]{Lang}) states that for each 
point $p\in H$, the exponential map $\exp_p \!: T_p H \to H$ is a diffeomorphism.

\begin{remark}\label{r.sectional}
Let $\kappa \le 0$.
If $H$ is complete, simply connected, and has sectional curvature $\le \kappa$ (resp. $\ge \kappa$) everywhere, 
then $H$ has curvature $\le \kappa$ (resp. $\ge \kappa$) in the sense of Alexandrov;
see \cite[Chap.~6]{BBI}. \closeremark 
\end{remark}

\subsubsection[Killing fields and symmetric manifolds]
{Killing fields and symmetric Cartan--Hadamard manifolds}\label{sss.killing}

Here we recall some general facts about symmetric manifolds and Killing fields; 
more information can be found in \cite[Ch.~XIII]{Lang}.

\medskip

If $H$ is Hilbert-manifold, a \emph{Killing field} is a vector field 
that generates a (globally defined) flow of isometries.
Then the flow also preserves the Riemannian connection. 
On the space $\Kill(H)$ of these fields, we consider the first-countable topology 
for which the convergence of sequences is uniform convergence on bounded subsets. 

We endow $C(\Omega, \Kill(H))$ with the compact-open topology. Then a sequence 
$(\mathfrak{a}_n)$ in $C(\Omega,\Kill(H))$ converges to $\mathfrak{a}$ iff for every 
bounded set $B \subset H$, $\|\mathfrak{a}_n(\omega)(p) - \mathfrak{a}(\omega)(p)\|$ 
converges to $0$ uniformly with respect to $(\omega,p) \in \Omega \times B$.

\medskip

Now let $H$ be a Cartan--Hadamard manifold. 
Suppose it is symmetric in the sense of \cref{sss.symm}.\footnote{It is 
easy to check that this agrees with the definition from \cite[p.~359]{Lang}.}
If $v_0 \in T_{p_0}H$ is a nonzero vector, let $\alpha \!:  \R \to H$ be the 
geodesic such that $\alpha(0) = p_0$, $\alpha'(0)=v_0$. Consider the transvection 
$$
\tau_{\alpha,s} := \sigma_{\alpha(s/2)} \circ \sigma_{\alpha(0)}.
$$
Then $\tau_{\alpha,s}$ is a flow of isometries called the \emph{translation 
along $\alpha$}. More precisely, we have  $\tau_{\alpha, s} (\alpha(t)) = \alpha(t+s)$.
Moreover, the derivative 
$$
T_{\alpha(t)} \tau_{\alpha,s}  \!:  T_{\alpha(t)} H \to T_{\alpha(t+s)} H
$$
is the parallel transport along the geodesic $\alpha$. Let $\xi_{v_0}$ 
denote the Killing field that generates the flow $\tau_{\alpha,s}$. 
(For $v_0 = 0$, we define $\xi_{v_0} \equiv 0$.) Then the map 
$v_0 \in TH \mapsto \xi_{v_0} \in \Kill(H)$ is continuous. 

As it is customary, we denote by $\fm_{p_0}$ the set of Killing fields 
$\xi_{v_0}$, where $v_0 \in T_{p_0}H$. This is a vector space, 
and it can also be expressed as 
$$
\fm_{p_0} = \big\{ \xi \in \Kill(H) \, \!:  
\nabla_\zeta \xi (p) =0 \text{ for all vector fields } \zeta \big \},
$$
where $\nabla$ denotes covariant derivative.

\subsubsection{Infinitesimal displacement estimates}

The following lemma is the infinitesimal counterpart of 
\cref{l.displ_estimate_proper,l.displ_estimate_nonproper}:

\begin{lemma}\label{l.displ_estimate_C}
Let $H$ be a symmetric Cartan--Hadamard manifold.
There is a non-decreasing function $f \!: \R_+\to\R_+$ with $f(0)=1$ such that
$$
\|v_0\| \stackrel{\text{\rm (I)}}{\le} 
\| \xi_{v_0} (p) \|  \stackrel{\text{\rm (II)}}{\le} 
f(d(p,p_0)) \|v_0\| 
$$
for all $p_0$, $p \in H$, $v_0 \in T_{p_0}H$.
\end{lemma}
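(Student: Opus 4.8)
The plan is to reduce the statement to the behavior of a single Jacobi field along the geodesic joining $p_0$ to $p$, exploiting that in a symmetric space the curvature tensor is parallel. First I would record the value of the field at the base point: since $\tau_{\alpha,s}(\alpha(t)) = \alpha(t+s)$, evaluating at $t=0$ gives $\tau_{\alpha,s}(p_0) = \alpha(s)$, so $\xi_{v_0}(p_0) = \alpha'(0) = v_0$ and in particular $\|\xi_{v_0}(p_0)\| = \|v_0\|$. Now fix $p$ and let $c \!: [0, \dist(p_0,p)] \to H$ be the unit-speed geodesic with $c(0) = p_0$ and $c(\dist(p_0,p)) = p$. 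Because $\xi_{v_0}$ is a Killing field, its flow carries geodesics to geodesics, so the restriction $Y(t) := \xi_{v_0}(c(t))$ is a Jacobi field along $c$. Its initial conditions are $Y(0) = \xi_{v_0}(p_0) = v_0$ and $Y'(0) = \nabla_{c'(0)}\xi_{v_0}(p_0) = 0$, the latter because $\xi_{v_0} \in \fm_{p_0}$ and, by the characterization of $\fm_{p_0}$ recalled in \cref{sss.killing}, every field in $\fm_{p_0}$ has vanishing covariant derivative at $p_0$.

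Next I would solve the Jacobi equation $Y'' + R(Y, c')c' = 0$ explicitly. Parallel-transporting a frame along $c$ identifies each $T_{c(t)}H$ with $T_{p_0}H$; since $H$ is symmetric, $\nabla R = 0$, and $c'$ is parallel, the operator $w \mapsto R(w, c')c'$ becomes a fixed self-adjoint operator $A$ on $T_{p_0}H$, independent of $t$. Nonpositive curvature gives $\langle A w, w\rangle = \langle R(w,c')c', w\rangle \le 0$, so $A = -B$ with $B \ge 0$, and the Jacobi equation becomes the constant-coefficient equation $\ddot y = B y$ with $y(0) = v_0$, $\dot y(0) = 0$, whose solution is $y(t) = \cosh(t\sqrt{B})\, v_0$ via the functional calculus of the bounded positive operator $B$. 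Since $\cosh(t\sqrt B) \ge I$, we immediately obtain $\|Y(t)\| = \|y(t)\| \ge \|v_0\|$, which at $t = \dist(p_0,p)$ is inequality (I). For the upper bound, $\|y(t)\| \le \|\cosh(t\sqrt B)\|\,\|v_0\| = \cosh\!\big(t\sqrt{\|B\|}\big)\|v_0\|$.

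Finally I would convert $\|B\|$ into a curvature bound and make it uniform. A lower sectional-curvature bound $K \ge -\lambda^2$ translates into $\langle B w, w\rangle \le \lambda^2 \|w\|^2$, hence $\|B\| \le \lambda^2$ and $\|Y(t)\| \le \cosh(\lambda t)\|v_0\|$; evaluating at $t = \dist(p_0,p)$ and setting $f(r) := \cosh(\lambda r)$ yields (II), with $f$ non-decreasing and $f(0) = 1$. By homogeneity of the symmetric space every point is equivalent to $p_0$, so a single constant $\lambda$ works everywhere and $f$ depends only on $H$. The main obstacle is precisely securing the uniform lower curvature bound $-\lambda^2$, that is, $\sup_{\|u\|=1}\|A_u\| < \infty$ over directions $u$ at $p_0$: in finite dimensions this is automatic from continuity of the curvature on the compact unit tangent sphere, but in the infinite-dimensional setting it must be extracted from the symmetric structure — the relevant examples (such as $\GL^2(\infty,\R)/\mathrm{O}^2(\infty)$) do have bounded curvature, consistent with \cref{r.sectional}. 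A secondary point requiring care is the justification of the Jacobi equation and the operator functional calculus on a Hilbert manifold, but these are routine once the curvature operator is known to be bounded.
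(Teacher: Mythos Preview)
Your argument is correct and shares the paper's opening move---restrict $\xi_{v_0}$ to the geodesic $c$ from $p_0$ to $p$, obtaining a Jacobi field $Y$ with $Y(0)=v_0$ and $Y'(0)=0$---but then diverges in execution. For (I) the paper uses only nonpositive curvature: the function $t\mapsto\|Y(t)\|^2$ is convex with derivative zero at $t=0$, hence is minimized there. For (II) the paper invokes the Rauch--Berger comparison theorem to reduce to the model space $M_\kappa$, obtaining $f(\ell)=\cosh(2\sqrt{-\kappa}\,\ell)$. You instead exploit the symmetric structure directly: since $\nabla R=0$, parallel transport turns the Jacobi equation into a constant-coefficient ODE $\ddot y=By$, whose explicit solution $y(t)=\cosh(t\sqrt{B})\,v_0$ yields both inequalities at once (and a slightly sharper $f(r)=\cosh(\sqrt{-\kappa}\,r)$). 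Your route is more unified and avoids quoting Rauch--Berger; the paper's route for (I) has the advantage of not using symmetry at all.

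The point you flag as the main obstacle---a uniform lower sectional-curvature bound in infinite dimensions---is in fact easy, and the paper dispatches it in \cref{r.symm_bound}: in the Hilbert-manifold framework the Riemann tensor at a point is a \emph{bounded} multilinear map, so sectional curvatures (hence $\|B\|$) are bounded at each point; transitivity of the isometry group, which you already invoke, makes this bound global. Once you insert this one-line observation, your proof is complete. (Your citation of \cref{r.sectional} is not quite the right one; that remark compares sectional and Alexandrov curvature bounds, whereas the relevant fact is \cref{r.symm_bound}.)
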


Inequality (I) above is related to nonpositive curvature; let us prove it first:

\begin{proof}[Proof of part (I) in \cref{l.displ_estimate_C}]
Fix $p_0, p$ in $H$, $v_0 \in T_{p_0} H$.
Assume $v_0 \neq 0$, otherwise there is nothing to prove.
Let $\beta$ be a unit-speed geodesic joining $p_0$ and $p$.
Let $\eta(t) := \xi_{v_0}(\beta(t))$.
Then (see \cite[Prop.~2.2, Ch~XIII]{Lang}) $\eta$ is a Jacobi field over the geodesic $\beta$.
By \cite[Prop.~5.6, Ch.~XIII]{Lang}, 
we have $\nabla_{\beta'} \eta (0) = 0$.
Let $g(t) := \| \eta(t) \|^2$.
By nonpositive curvature, this function is convex; 
see \cite[Lemma~1.1, Ch.~X]{Lang}. 
The same lemma also says that 
$g' = 2 \langle \nabla_{\beta'} \eta, \eta \rangle$,
which vanishes at $t=0$.
It follows that $g(t) \ge g(0)$ for all $t \in \R$.
In particular, $\| \xi_{v_0}(p) \|^2 \ge \| \xi_{v_0}(p_0) \|^2 = \|v_0\|^2$,
thus completing the proof of inequality~(I).
\end{proof}

If $H$ is finite-dimensional then the existence of a function with property (II)
in \cref{l.displ_estimate_C} is nearly trivial, and does not rely on nonpositive curvature:

\begin{proof}[Proof of part (II) in \cref{l.displ_estimate_C} assuming $\dim H < \infty$]
Consider
$$
\tilde f(p_0, \ell) := \sup \big\{ \|\xi_{v_0}(p)\| \, \!:  
p \in H \text{ with } d(p,p_0)\le \ell, \  v_0 \in T_{p_0}H   \text{ with }  \|v_0\| = 1 \big\},
$$
which is finite by compactness.
Since $\Isom(H)$ acts transitively on $H$, the value $\tilde f(p_0, \ell)$ actually does 
not depend on $p_0$, and so defines a function $f(p_0)$ with the required properties.
\end{proof}

The proof of (II) in the infinite-dimensional case requires geometric arguments
and is given in the Appendix~\ref{s.killing}.

\subsubsection{Infinitesimal uniform homogeneity}\label{sss.homog_inf}

The following is an infinitesimal version of the macroscopic uniform homogeneity,  
i.e., \cref{l.homog_macro}. It basically says that we can move any point $p$ in 
any desired direction $w$ by an infinitesimal isometry (Killing field), and 
these fields can be chosen so that they converge uniformly (with respect 
to $p$) in bounded sets to zero as $\|w\| \to 0$.

\begin{lemma}[Infinitesimal uniform homogeneity]\label{l.homog_inf}
Let $p_0 \in H$.
There is a continuous map 
\begin{align*}
K  \!: TH &\to     \Kill (H) \\
          w  &\mapsto K_{w}
\end{align*}
with the following properties:
\begin{enumerate}
\item The vector field $K_w$ extends $w$, that is, if $p = \pi(w) \in H$ 
is the base point of $w$, then $K_{w}(p) = w$;
\item 
For any $q \in H$,
$$
\| K_{w}(q)\|  \le  f(d(q,p_0)) \| w \|,
$$
where $f$ is given by \cref{l.displ_estimate_C}.
\end{enumerate}
\end{lemma}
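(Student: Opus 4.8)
The plan is to realize $K_w$ as a transvection field whose axis passes through the fixed reference point $p_0$, mirroring the macroscopic construction of \cref{l.homog_macro}; the point of anchoring the axis at $p_0$ (rather than at the base point of $w$) is precisely what will produce a bound in terms of $\dist(q,p_0)$ rather than $\dist(q,\pi(w))$. Recall from \cref{sss.killing} the continuous assignment $v_0 \mapsto \xi_{v_0}$, which is linear in $v_0$, sending a vector $v_0 \in T_{p_0}H$ to the Killing field in $\fm_{p_0}$ generating the translation along the geodesic issuing from $p_0$ with velocity $v_0$, so that $\xi_{v_0}(p_0) = v_0$. For a point $p \in H$ I would consider the evaluation map
$$
L_p \!: T_{p_0}H \to T_pH, \qquad L_p(v_0) := \xi_{v_0}(p),
$$
which is linear and jointly continuous in $(p,v_0)$, and then set $K_w := \xi_{v_0}$, where $p := \pi(w)$ and $v_0 := L_p^{-1}(w)$. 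Granting that $L_p$ is invertible, property~(1) is immediate, since $K_w(p) = \xi_{v_0}(p) = L_p(v_0) = w$.

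The bound in property~(2) is where the two halves of \cref{l.displ_estimate_C} combine. Since $v_0 \in T_{p_0}H$, inequality~(II) applied with base point $p_0$ gives $\|\xi_{v_0}(q)\| \le f(\dist(q,p_0))\,\|v_0\|$ for every $q \in H$. On the other hand, inequality~(I) applied to the field $\xi_{v_0}$ evaluated at the point $p$ gives $\|v_0\| \le \|\xi_{v_0}(p)\| = \|w\|$. Combining these two estimates yields $\|K_w(q)\| = \|\xi_{v_0}(q)\| \le f(\dist(q,p_0))\,\|w\|$, which is exactly the desired inequality. (In particular this is consistent with $f \ge 1$, forced by $f(0)=1$ and monotonicity, since at $q=p$ one needs $\|w\| \le f(\dist(p,p_0))\|w\|$.)

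It then remains to show that $L_p$ is a bijection and that $w \mapsto v_0 = L_{\pi(w)}^{-1}(w)$ is continuous; composing with the continuous map $v_0 \mapsto \xi_{v_0}$ delivers continuity of $K$. Injectivity of $L_p$ is immediate from inequality~(I) (if $\xi_{v_0}(p)=0$ then $v_0=0$). In the proper (finite-dimensional) case surjectivity is then automatic, as $L_p$ is an injective linear map between spaces of equal dimension $\dim H$. Moreover $\|L_p^{-1}\| \le 1$ by~(I), so the family $\{L_p^{-1}\}$ is uniformly bounded; since $L_p$ depends continuously on $p$, I would deduce continuity of $p \mapsto L_p^{-1}$, and hence joint continuity of $w \mapsto v_0$ on $TH$, the uniform bound $\|v_0\| \le \|w\|$ providing the compactness leverage to pass to limits.

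I expect the main obstacle to be surjectivity of $L_p$ in the infinite-dimensional case, where the dimension count is unavailable. Here the plan is to argue constructively, exhibiting a preimage directly from the macroscopic map $J$ of \cref{l.homog_macro}: setting $c(t) := \exp_p(tw)$ and $J_t := J(p,c(t))$, each $J_t$ is a transvection through $p_0$ with $J_t(p) = c(t)$ and $J_0 = \id$, so $\frac{d}{dt}\big|_{t=0} J_t$ is a well-defined element of $\Kill(H)$ of the form $\xi_{v_0}$ with $v_0 \in T_{p_0}H$; differentiating the identity $J_t(p) = c(t)$ at $t=0$ gives $\xi_{v_0}(p) = c'(0) = w$. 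This simultaneously produces the preimage $v_0$ and establishes surjectivity without appealing to dimension. The delicate points that I would then defer to the geometric arguments of the infinite-dimensional setting are the smoothness of the midpoint map underlying $J$ (so that $t \mapsto J_t$ is differentiable) and the continuity of $L_p^{-1}$ in the topology of uniform convergence on bounded sets, for which the uniform bound $\|L_p^{-1}\| \le 1$ is again the decisive ingredient.
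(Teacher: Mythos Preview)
Your core construction and the derivation of the bound in property~(2) are exactly the paper's: define $K_w := \xi_{(L_p)^{-1}(w)}$ and combine inequalities~(I) and~(II) of \cref{l.displ_estimate_C}. The one place where you diverge is in the surjectivity of $L_p$ in infinite dimension. The paper defers this to \cref{l.onto} in Appendix~\ref{s.killing}, which works entirely inside the Jacobi-field calculus: given $v \in T_pH$, it takes the geodesic $\beta$ through $p_0$ and $p$, invokes Cartan--Hadamard--McAlpin to produce a Jacobi field $\eta$ along $\beta$ with $\eta(\ell)=v$ and $\eta(-\ell)=0$, and then antisymmetrizes $\eta$ under $\sigma_{p_0}$ to obtain a Jacobi field $\chi$ with $\nabla_{\beta'}\chi(0)=0$, hence $\chi \in \fm_{p_0}$ and $\chi(p)=v$.

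Your alternative, differentiating the macroscopic family $J_t := J(p,\exp_p(tw))$, is also sound and has the virtue of making the link with \cref{l.homog_macro} explicit (this is in the spirit of the paper's own remark following \cref{l.homog_inf}). Two small points deserve to be made precise. First, you assert that $\frac{d}{dt}\big|_{t=0}J_t$ lies in $\fm_{p_0}$; this follows from $\sigma_{p_0} J_t \sigma_{p_0} = J_t^{-1}$, which gives the $-1$ eigenvalue under $\mathrm{Ad}(\sigma_{p_0})$. Second, the smoothness of $t \mapsto J_t$ that you flag reduces to smoothness of the midpoint map, which on a Cartan--Hadamard manifold is $\exp_p\big(\tfrac12\exp_p^{-1}(q)\big)$ and hence smooth by Cartan--Hadamard--McAlpin. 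So your route ultimately rests on the same ingredient (the global diffeomorphism $\exp_p$) as the paper's Jacobi-field argument, packaged differently.
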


\begin{proof}[Proof of \cref{l.homog_inf}]
Let $p_0 \in H$ be fixed. 
For $p \in H$, consider the map
\begin{align*}
L_{p}  \!:  T_{p_0} H &\to     T_p H        \\
                  v_0 &\mapsto \xi_{v_0}(p).
\end{align*}
We list below some properties of $L_{p}$:
\begin{itemize}
\item It is linear; see \cite[p.~363]{Lang}.
\item It is continuous; see part (II) of  \cref{l.displ_estimate_C}.
\item It is one-to-one, and the inverse (on the image) is continuous; see part (I) of  \cref{l.displ_estimate_C}.
\item It is onto; see \cref{l.onto} in Appendix~\ref{s.killing}. (In finite 
dimension, this part would of course be a trivial consequence of the others.)
\end{itemize}
Given $w \in T_p H$, define $K_{w} := \xi_{(L_{p})^{-1}(w)}$.
Using parts (II) and (I) of \cref{l.displ_estimate_C}, we have
\begin{equation*}
\| K_{w}(q)\|  \le  f(d(q,p_0)) \| K_{w}(p_0)\| \le  f(d(q,p_0)) \| w \|. \qedhere
\end{equation*}
\end{proof}

\begin{remark}
Although there is no apparent advantage in doing so, 
it is possible to give an alternative proof of \cref{l.homog_macro} using \cref{l.homog_inf}:
Given two points $p$ and $q$, join them by a geodesic $\gamma \!: [0,\ell] \to H$,
and integrate the time-varying Killing field $K_{\gamma'(t)}$
(where $K$ is given by \cref{l.homog_inf}) to get the map $J(p,q)$.~\closeremark 
\end{remark}

\subsection[Proof of \cref{t.slow_C}]{Existence of sections of nearly minimal 
speed: proof of \cref{t.slow_C}}\label{ss.proof_slow_C}

For each $p\in H$, we let $\delta_p$ or $\delta(p)$ denote the Dirac measure at the point $p$. 
If $\gamma \!: [a,b] \to H$ is a curve, we denote by $\int_a^b \delta(\gamma(t)) \,\mathrm{d}t$
the measure on $H$ obtained by pushing-forward by $\gamma$ the Lebesgue measure on $[a,b]$.

In the proof of \cref{t.slow_C}, we will need the following technical result, 
whose proof is given in the Appendix~\ref{s.diff_cartan}.

\begin{lemma}[Differentiability of the Cartan barycenter]\label{l.diffbar}
Let $I\subset \R$ be an open interval and let 
$h \!: I \times [0,T] \to H$ be a continuous mapping 
that is continuously differentiable with respect to the first variable.
Then the map $\bar{h} \!:  I \to H$ defined by
\begin{equation}\label{e.hbar}
\bar{h}(t) = \bary \left( \frac{1}{T} \int_0^T \delta_{h(t,s)} \, \mathrm{d}s \right)
\end{equation}
(where $\bary$ denotes the Cartan barycenter)
is continuously differentiable.
\end{lemma}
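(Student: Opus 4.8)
The plan is to characterize $\bar h(t)$ as the unique zero of a gradient map and then invoke the implicit function theorem on the Hilbert manifold $H$. Writing $\mu_t := \frac{1}{T}\int_0^T \delta_{h(t,s)}\,\mathrm{d}s$ (a compactly supported measure, since $h(t,\cdot)$ is continuous on $[0,T]$), the Cartan barycenter $\bar h(t) = \bary(\mu_t)$ from \eqref{e.hbar} is by definition the unique minimizer of the function \eqref{e.sum_squares},
$$
F(t,p) := f_{\mu_t}(p) = \frac{1}{T}\int_0^T \dist^2(h(t,s),p)\,\mathrm{d}s.
$$
Since $H$ is a Cartan--Hadamard manifold, $\exp_p$ is a global diffeomorphism for every $p$ (\cref{sss.cartan-h}), so $q \mapsto \exp_p^{-1}(q)$ is well defined and smooth, and $\dist^2(q,\cdot)$ is smooth with $\grad_p \dist^2(q,p) = -2\exp_p^{-1}(q)$. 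Hence the minimizer is the unique solution of the barycenter equation $G(t,\bar h(t)) = 0$, where
$$
G(t,p) := \grad_p F(t,p) = -\frac{2}{T}\int_0^T \exp_p^{-1}(h(t,s))\,\mathrm{d}s \ \in\ T_p H.
$$

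First I would fix $t_0 \in I$, set $p_0 := \bar h(t_0)$, and pass to the normal chart $x \mapsto \exp_{p_0}(x)$, which identifies a neighborhood of $p_0$ with a neighborhood of $0$ in the Hilbert space $E := T_{p_0}H$. In this chart the task is to solve $\Phi(t,x) = 0$ for $x$, where $\Phi(t,x)$ is the coordinate expression of $G(t,\exp_{p_0}(x))$ (equivalently $D_x\hat F(t,x)$ for $\hat F(t,x):=F(t,\exp_{p_0}(x))$). The next step is to verify that $\Phi$ is of class $C^1$ jointly in $(t,x)$. Here I would differentiate under the integral sign: the integrand $\exp_p^{-1}(h(t,s))$ is smooth in $p$, and because $h$ is continuously differentiable in its first variable, its $t$-derivative is $D(\exp_p^{-1})_{h(t,s)}\big[\partial_t h(t,s)\big]$, which is jointly continuous in $(t,s,p)$. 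Since $s$ ranges over the compact interval $[0,T]$ and $h$ is continuous, all the relevant integrands are uniformly continuous on compact sets, so $D_t\Phi$ and $D_x\Phi$ exist and are continuous; that $h$ is merely continuous in $s$ is harmless, as we only integrate in $s$.

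The crucial step is the invertibility of $D_x\Phi(t_0,0)$, which in the normal chart at the base point is the Hessian $\operatorname{Hess}_p F(t_0,p_0)$. Here nonpositive curvature enters decisively: by Jacobi-field comparison on a Cartan--Hadamard manifold, each function $\dist^2(q,\cdot)$ satisfies $\operatorname{Hess}\dist^2(q,\cdot) \ge 2\,\id$, whence $\operatorname{Hess}_p F(t_0,p_0) \ge 2\,\id$ as a bounded self-adjoint operator on $E$. In particular it is invertible with $\|(\operatorname{Hess}_p F)^{-1}\| \le \tfrac12$ — a uniform bound that is essential in the infinite-dimensional setting, where mere positivity would not guarantee a bounded inverse. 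With $\Phi$ of class $C^1$ and $D_x\Phi(t_0,0)$ boundedly invertible, the implicit function theorem on Hilbert spaces yields a $C^1$ solution $t \mapsto x(t)$ near $t_0$ with $\Phi(t,x(t))=0$; by uniqueness of the Cartan barycenter this is the chart representation of $\bar h$, so $\bar h(t)=\exp_{p_0}(x(t))$ is continuously differentiable near $t_0$.

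Since $t_0 \in I$ was arbitrary, $\bar h$ is $C^1$ on $I$, as claimed. I expect the main obstacle to be the regularity bookkeeping in infinite dimensions — rigorously justifying differentiation under the integral sign and the joint $C^1$-continuity of $\Phi$ — together with securing the uniform, bounded-below Hessian estimate (via curvature comparison) that makes the Hilbert-space implicit function theorem applicable.
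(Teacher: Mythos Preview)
Your proposal is correct and follows essentially the same route as the paper: characterize $\bar h(t)$ as the zero of the gradient vector field $G(t,p)=-\tfrac{2}{T}\int_0^T \exp_p^{-1}(h(t,s))\,\mathrm{d}s$, use nonpositive curvature to obtain the uniform lower bound $\operatorname{Hess}_p F \ge 2\,\id$ (the paper phrases this as $\nabla_v \grad f$ being self-adjoint and $\ge \Id$, citing Karcher), and then invoke the implicit function theorem. Your treatment is slightly more explicit about the regularity bookkeeping (passing to normal coordinates, justifying differentiation under the integral sign) than the paper, which defers these details to a reference.
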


\begin{proof}[Proof of \cref{t.slow_C}]
Fix any $p_0 \in H$.
For $T>0$, let
\begin{equation}\label{e.magic3}
\phi_T (\omega) = 
\bary \left( \frac{1}{T} \int_0^T \delta\left( [A^{(t)}(\omega)]^{-1}  
p_0 \right)  \,\mathrm{d}t  \right)
\end{equation}
(Compare with \eqref{e.magic2}.)

\begin{claim}
The function $\phi_T \!:  \Omega \to H$ is differentiable with respect to the semiflow.
\end{claim}

\begin{proof}[Proof of the claim]
We have 
$$
\phi_T (F^t \omega) = 
\bary \left( \frac{1}{T} \int_0^T \delta\left( [A^{(s)}(F^t \omega)]^{-1}  p_0 \right)  \,\mathrm{d}s  \right) \, .
$$
By \cref{l.diffbar}, to show that the map $\phi_T (F^t \omega)$
is continuously differentiable with respect to $t$, it suffices to check 
that the map $(s,t) \mapsto [A^{(s)}(F^t \omega)]^{-1}  p_0 \in H$ is 
continuous and continuously differentiable with respect to $t$. But 
these properties follow from the cocycle identity \eqref{e.cocycle_C} 
and the regularity assumptions \eqref{e.regularity}.
\end{proof}

Next, we want to estimate the distance:
\begin{equation}\label{e.to_estimate}
d \big(\phi_T (F^t \omega) , A^{(t)} (\omega)  \phi_T(\omega) \big)  =
d \big( \underbrace{[A^{(t)} (\omega)]^{-1} (\phi_T (F^t \omega))}_{(\star)}, \phi_T(\omega) \big).
\end{equation}
Assuming $t \in (0,T)$, we have
\begin{align*}
(\star) 
&= \bary  \frac{1}{T} \int_0^T \delta\left( [A^{(t)} (\omega)]^{-1} [A^{(s)}(F^t \omega)]^{-1}  p_0 \right)  \,\mathrm{d}s  \\
&= \bary  \frac{1}{T} \int_0^T \delta\left( [A^{(s+t)} (\omega)]^{-1}  p_0 \right)  \,\mathrm{d}s  \\
&= \bary  \frac{1}{T} \int_t^{T+t} \delta\left( [A^{(s)} (\omega)]^{-1}  p_0 \right)  \,\mathrm{d}s.
\end{align*}
Using the barycenter property \eqref{e.bar_lips_C},
we obtain that the distance \eqref{e.to_estimate} is at most
$$
\frac{t}{T} \sup \left\{ d\big([A^{(s)} (\omega)]^{-1}  p_0 ,[A^{(u)} (\omega)]^{-1}  p_0  \big) \, \!:  s \in [0,t], u \in [T,T+t] \right\} \, .
$$
Dividing by $t$ and making $t \to 0$, we obtain
$$
\| \phi_T'(\omega) - \mathfrak{a} (\omega)(\phi_T(\omega))\|
\le \frac{1}{T} \dist \big(A^{(T)}(\omega)  p_0, p_0 \big) \, .
$$
In particular,
$$
\speed (\phi_T) \le \sup_{\omega \in \Omega} 
\frac{1}{T} \dist \big(A^{(T)}(\omega)  p_0, p_0 \big).
$$
The theorem follows by taking $\phi=\phi_N$ with sufficiently large $N$.
\end{proof}

\subsection[Proof of \cref{t.closing_C}]
{Creating invariant sections: proof of 
\cref{t.closing_C}}\label{ss.proof_closing_C}

\begin{proof}[Proof of \cref{t.closing_C}]
By \cref{t.slow_C}, there exists a family of sections 
$\phi_T$ such that $\lim_{T \to \infty} \speed(\phi_T) = 0$.
Let $K(w) = K_{w}$ be the map given by \cref{l.homog_inf}.
Define
\begin{equation}\label{e.closing_C}
\tilde{\mathfrak{a}}_T (\omega) = \mathfrak{a}(\omega) + K \big( \phi_T'(\omega) - \mathfrak{a}(\omega)(\phi_T(\omega)) \big).
\end{equation}
(Compare with \eqref{e.closing_D}.)
Then:
\begin{itemize}
\item $\tilde{\mathfrak{a}}_T (\omega) (\phi_T(\omega)) = \phi'_T(\omega)$, that is,
$\phi_T$ is an invariant section for the cocycle generated by $\tilde{\mathfrak{a}}_T$;
\item for each bounded subset $B$ of $H$,
the sequence $\tilde{\mathfrak{a}}_T (\omega) (p)$ converges to $\mathfrak{a}(\omega)(p)$ 
uniformly with respect to $(\omega,p) \in \Omega \times B$.
\end{itemize}
Thus the theorem is proved.
\end{proof}

\begin{remark}\label{r.fibered_C}
Similarly to \cref{r.fibered_D}, it should be possible to state and prove fibered versions of \cref{t.slow_C,t.closing_C}, but we have not checked that.
It seems to be necessary to use a connection on the bundle 
in order to define the speed of a section.~\closeremark
\end{remark}

\appendix

\section{Appendix: The displacement estimate for non-proper spaces} \label{s.alexandrov}

In this appendix we prove \cref{l.displ_estimate_nonproper}. 
We will actually obtain an explicit formula for the function $f$.
Some preliminaries are needed.

\subsection{Angles and more comparisons}

\begin{proposition}\label{p.angle}
Let $H$ be a space of curvature $\le \kappa$ (resp.\ $\ge \kappa$), 
with $\kappa \leq 0$. Let $\gamma_1$, $\gamma_2$ be two geodesics 
such that $\gamma_1(0) = \gamma_2 (0) = p_0$. For each $t > 0$, $s>0$, 
consider the triangle $\triang(p_0,\gamma_1(t), \gamma_2(s))$,
and let $\triang(\tilde p_0,\tilde p_{1,t}, \tilde p_{2,s})$ 
be an SSS-comparison triangle in 
$M_\kappa$.
Let $\theta_\kappa(t,s)$ be the angle at the vertex $\tilde p_0$. Then 
the function $\theta_\kappa(t,s)$ is monotonically nondecreasing 
(resp.\ nonincreasing) with respect to each variable.
\end{proposition}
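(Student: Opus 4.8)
The plan is to establish monotonicity in the first variable $t$ with $s$ held fixed; monotonicity in $s$ then follows by the symmetric argument with the roles of $\gamma_1$ and $\gamma_2$ interchanged. I treat the case of curvature $\le \kappa$ in detail, the case $\ge \kappa$ being identical with every comparison inequality reversed. The whole argument rests on one planar fact about the model space, which I will call the \emph{hinge monotonicity} of $M_\kappa$: among triangles in $M_\kappa$ having two prescribed side lengths $a$ and $b$ meeting at a common vertex, the angle at that vertex is a strictly increasing function of the length $c$ of the side opposite to it. For $\kappa=0$ this is immediate from $\cos\theta = (a^2+b^2-c^2)/(2ab)$, and for $\kappa<0$ it follows from the hyperbolic law of cosines (after the usual rescaling of $M_\kappa$). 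I will assume, as elsewhere in the paper, that $\gamma_1,\gamma_2$ are parametrized by arc length, so that $d(p_0,\gamma_1(t))=t$ and $d(p_0,\gamma_2(s))=s$.

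Fix $0 < t < t'$ and $s>0$. First I would note that, since $\gamma_1$ is a geodesic issuing from $p_0$, the point $\gamma_1(t)$ lies on the geodesic segment $[p_0,\gamma_1(t')]$. Next, take an SSS-comparison triangle $\triang(\tilde p_0,\tilde p_{1,t'},\tilde p_{2,s})$ in $M_\kappa$ for the larger triangle $\triang(p_0,\gamma_1(t'),\gamma_2(s))$; by definition its angle at $\tilde p_0$ equals $\theta_\kappa(t',s)$. Let $\tilde q$ be the point on the side $[\tilde p_0,\tilde p_{1,t'}]$ with $d_\kappa(\tilde p_0,\tilde q)=t$, so that $\tilde q$ corresponds to $\gamma_1(t)$ under the comparison. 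Applying the curvature $\le \kappa$ hypothesis to $\triang(p_0,\gamma_1(t'),\gamma_2(s))$, taking the subdivision point on $[p_0,\gamma_1(t')]$ to be $\gamma_1(t)$ and the opposite vertex to be $\gamma_2(s)$, gives the crucial estimate
$$
d(\gamma_1(t), \gamma_2(s)) \le d_\kappa(\tilde q, \tilde p_{2,s}).
$$

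Now I compare two triangles living in the single model space $M_\kappa$. The triangle $\triang(\tilde p_0,\tilde q,\tilde p_{2,s})$ has sides of length $t$ and $s$ at $\tilde p_0$ and opposite side $d_\kappa(\tilde q,\tilde p_{2,s})$; moreover, because $\tilde q$ lies on the segment $[\tilde p_0,\tilde p_{1,t'}]$, its angle at $\tilde p_0$ equals $\theta_\kappa(t',s)$. On the other hand, the SSS-comparison triangle for the smaller triangle $\triang(p_0,\gamma_1(t),\gamma_2(s))$ has the same two side lengths $t$ and $s$ at its apex, opposite side $d(\gamma_1(t),\gamma_2(s))$, and apex angle $\theta_\kappa(t,s)$. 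Since the displayed inequality shows the former has the longer opposite side, hinge monotonicity yields $\theta_\kappa(t',s) \ge \theta_\kappa(t,s)$, which is exactly the asserted nondecreasing behaviour. The main obstacle is therefore not the geometry of $H$ — that is entirely concentrated in the single application of the curvature bound — but the verification of the elementary trigonometric hinge lemma in $M_\kappa$, uniformly over all admissible side lengths and for every $\kappa\le 0$; everything else is bookkeeping, and the curvature $\ge\kappa$ statement is obtained verbatim by reversing the two inequalities above.
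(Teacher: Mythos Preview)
Your argument is correct and is precisely the standard Alexandrov monotonicity proof: one application of the CAT($\kappa$) (resp.\ reverse) inequality on the large triangle to compare $d(\gamma_1(t),\gamma_2(s))$ with the model distance $d_\kappa(\tilde q,\tilde p_{2,s})$, followed by the hinge lemma in $M_\kappa$ via the law of cosines. The paper itself does not supply a proof of this proposition at all; it simply refers the reader to \cite{ABN}, so there is no in-house argument to compare yours against --- you have written out what the cited reference would provide.
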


\begin{proof}
See \cite{ABN}. 
\end{proof}

In particular, the limit of $\theta_\kappa(t,s)$ as $(t,s) \to (0,0)$ exists.
(Notice that the limit actually does not depend on $\kappa$.) It is 
called the \emph{angle} between $\gamma_1$ and $\gamma_2$ at $p_0$. 

An immediate consequence of \cref{p.angle} is that if $H$ has 
curvature $\le \kappa$ (resp.\ $\ge \kappa$), with $\kappa \leq 0$, 
then the angles of an SSS-comparison triangle in $M_\kappa$ are smaller 
(resp.\ greater) than or equal to the corresponding angles for the triangle in $H$.

If $\triang(p_1,p_2,p_3)$ is a triangle in $H$ and 
$\triang(\tilde p_1, \tilde p_2, \tilde p_3)$ is a triangle in $M_\kappa$
such that the angles at the vertices $p_1$ and $\tilde p_1$ are equal
and the corresponding sides at these vertices have equal lengths
(i.e., $\dist_\kappa(\tilde p_1, \tilde p_j) = \dist(p_1, p_j)$ for $j=2$, $3$), 
then we say that $\triang(\tilde p_1, \tilde p_2, \tilde p_3)$ is an  
\emph{SAS-comparison triangle}\footnote{SAS stands for side-angle-side.} 
for $\triang(p_1,p_2,p_3)$.

\begin{lemma}\label{l.SAS}
Let $H$ be a uniquely geodesic space of curvature $\ge \kappa$ 
(where $\kappa \le 0$). Let $\triang(p_1,p_2,p_3)$ be a triangle in $H$,
and let $\triang(\tilde p_1, \tilde p_2, \tilde p_3)$ be 
an SAS-comparison triangle in $M_\kappa$
(with equal angles at vertices $p_1$ and $\tilde p_1$).
Then:
\begin{enumerate}
\item\label{i.side} 
The side $p_2 p_3$ is shorter than or has the same length 
as the side $\tilde p_2 \tilde p_3$.
\item\label{i.angle} 
If the angles at $p_2$ and $p_3$ are less than $\pi/2$, 
then the angle at $p_2$ (resp.\ $p_3$) is larger than 
or equal to the angle at $\tilde p_2$ (resp.\ $\tilde p_3$).
\end{enumerate}
\end{lemma}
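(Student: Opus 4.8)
The plan is to compare the given triangle with its SSS-comparison triangle, and then to deform the latter into the SAS-comparison triangle within the model space $M_\kappa$. The only input from the geometry of $H$ is the angle comparison furnished by \cref{p.angle}: since $H$ has curvature $\ge\kappa$, the comparison angle function $\theta_\kappa(t,s)$ is nonincreasing, so the angle at each vertex of a triangle in $H$ (being a limit of such comparison angles, hence their supremum) is \emph{at least} the corresponding angle of its SSS-comparison triangle in $M_\kappa$. Everything else will be an elementary computation in the two-dimensional space $M_\kappa$. Set $a = \dist(p_2,p_3)$, $b = \dist(p_1,p_3)$, $c = \dist(p_1,p_2)$, and let $\alpha$ be the angle of $\triang(p_1,p_2,p_3)$ at $p_1$, which by hypothesis equals the angle of the SAS-comparison triangle at $\tilde p_1$.

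First I would record the key planar fact. Fixing the side lengths $b$ and $c$, consider the one-parameter family of triangles in $M_\kappa$ having two sides of lengths $b$, $c$ meeting at an apex angle $\theta \in (0,\pi)$; write $a(\theta)$ for the opposite side and $\beta(\theta)$, $\gamma(\theta)$ for the two base angles. Using the law of cosines in $M_\kappa$, one checks that $a(\theta)$ is strictly increasing, and that $\frac{d}{d\theta}\cos\beta$ has the same sign as $\cos\gamma$ (and symmetrically for $\gamma$). Consequently, on the parameter range where both base angles are acute, $\beta$ and $\gamma$ are strictly decreasing in $\theta$; moreover this range is preserved as $\theta$ increases, because each base angle can only decrease so long as the other remains below $\pi/2$.

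For part (a), let $\theta_\kappa$ be the apex angle of the SSS-comparison triangle of $\triang(p_1,p_2,p_3)$ at the vertex corresponding to $p_1$; the angle comparison gives $\theta_\kappa \le \alpha$. The SSS-comparison triangle is precisely the member $\theta = \theta_\kappa$ of the family above, so its opposite side equals $a(\theta_\kappa) = a$, while the SAS-comparison triangle is the member $\theta = \alpha$, with opposite side $a(\alpha)$. Since $a(\theta)$ is increasing and $\alpha \ge \theta_\kappa$, we obtain $\dist(\tilde p_2,\tilde p_3) = a(\alpha) \ge a(\theta_\kappa) = \dist(p_2,p_3)$, which is part (a). For part (b), assume the angles at $p_2$ and $p_3$ in $H$ are $< \pi/2$. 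The angle comparison shows the base angles of the SSS-comparison triangle are at most the corresponding angles in $H$, hence also $< \pi/2$. Deforming $\theta$ from $\theta_\kappa$ up to $\alpha$, the planar fact keeps both base angles acute and decreasing; thus the base angles of the SAS-comparison triangle are at most those of the SSS-comparison triangle, which are in turn at most the corresponding angles in $H$. This is part (b).

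The main obstacle is the planar monotonicity statement, and in particular the verification that the region ``both base angles acute'' is preserved as the apex angle grows --- this is exactly what makes the acuteness hypothesis the correct one in part (b). For $\kappa = 0$ this is the elementary Euclidean computation sketched above, where $\frac{d}{d\theta}\cos\beta$ is a positive multiple of $\cos\gamma$; for $\kappa < 0$ I would repeat it with the hyperbolic law of cosines, where the same sign pattern persists. Unique geodesy of $H$ guarantees that the angles appearing in \cref{p.angle} are unambiguous, so no further care is needed on that point.
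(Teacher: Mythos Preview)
Your proof is correct and follows essentially the same route as the paper: introduce the SSS-comparison triangle, use the angle comparison from \cref{p.angle} to bound its angles by those of the triangle in $H$, and then reduce to a purely planar comparison in $M_\kappa$ between the SSS- and SAS-comparison triangles (which share two side lengths but have different apex angles). The paper states this planar comparison as a Claim and leaves its proof as an exercise; you supply the details via the deformation/monotonicity argument using the law of cosines, which is exactly the intended verification. One minor point worth noting: your formulation of the planar step places the acuteness hypothesis on the \emph{smaller}-apex triangle (the SSS one), which connects directly to the hypothesis on $H$ via \cref{p.angle}, whereas the paper's Claim places it on the SAS triangle; your deformation argument (acuteness is preserved as the apex angle grows) is precisely what bridges the two, and makes the logic airtight.
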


\begin{proof}
Consider a triangle $\triang(p_1,p_2,p_3)$ in $H$ (which we assume has curvature $\ge \kappa$).
In $M_\kappa$, we take an SAS-comparison triangle $\triang(\tilde p_1, \tilde p_2, \tilde p_3)$ 
(so that the angles at $p_1$ and $\tilde p_1$ are equal)
and an SSS-comparison triangle $\triang (q_1, q_2, q_3)$.
Then the angle at $q_i$ is less than the angle at $p_i$.
Thus, to complete the proof of the lemma, we need 
the following facts about plane hyperbolic geometry,
whose proof we will leave as an exercise.

\begin{claim}  
Suppose $\triang(q_1, q_2, q_3)$ and $\triang(\tilde p_1, \tilde p_2, \tilde p_3)$ 
are triangles in $M_\kappa$ (where $\kappa \le 0$) so that the angle at vertex 
$\tilde p_1$ is bigger than the angle at vertex $q_1$, and the adjacent sides are equal. 
Then:
\begin{enumerate}
\item The side $\tilde p_2 \tilde p_3$ is bigger than the side $q_2 q_3$.

\item If the angles at $\tilde p_2$ and $\tilde p_3$ are both less than $\pi/2$, 
then they are smaller than the angles at $q_2$ and $q_3$, respectively. 

\end{enumerate}
\end{claim}
\end{proof}

\subsection{The displacement estimate}

The following is a more precise version of \cref{l.displ_estimate_nonproper}:

\begin{lemma}\label{l.displ_estimate_again}
Assume that $H$ is a geodesic symmetric space 
of bounded nonpositive curvature in the sense of Alexandrov.
Let $\kappa<0$ be a lower bound for the curvature. Let $J$ be 
a transvection that translates a geodesic $\gamma$ by length $b$.
Given $q \in H$, let $s = \dist(J(q),q)$, $\ell = \dist(q,\gamma)$, and $\lambda=\sqrt{-\kappa}$.
Then
\begin{equation}\label{e.complicated}
\cosh \lambda s \le 
\cosh^2 \frac{\lambda b}{2} \cosh^2 2 \lambda \ell + 
\sinh^2 \frac{\lambda b}{2} \cosh 2\lambda\ell -
\cosh \frac{\lambda b}{2}  \sinh^2 2 \lambda\ell.
\end{equation}
\end{lemma}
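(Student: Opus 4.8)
The plan is to reduce \eqref{e.complicated} to a single application of the hyperbolic law of cosines in the model plane $M_\kappa$, after choosing the comparison data so as to exploit the structure of the transvection. Since $J$ translates $\gamma$ by $b$, I may write $J=\sigma_{p_2}\circ\sigma_{p_1}$ for \emph{any} $p_1,p_2\in\gamma$ whose oriented displacement along $\gamma$ is $b/2$; the freedom in the choice of $p_1$ is what drives the argument. First I would take $p_1:=q_0$, the foot of the perpendicular from $q$ to $\gamma$ (so $\dist(q,q_0)=\ell$ and $q q_0\perp\gamma$), and let $p_2\in\gamma$ be the point at distance $b/2$ from $q_0$ on the translation side. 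Setting $q':=\sigma_{q_0}(q)$, the point $q_0$ is the midpoint of $[q,q']$ with $\dist(q,q')=2\ell$, while $p_2$ is the midpoint of $[q',J(q)]$ with $\dist(q',J(q))=2c$, where $c:=\dist(q',p_2)$. Because $q_0$ lies on $[q',q]$ and $p_2$ lies on $[q',J(q)]$, the apex angle $\alpha:=\angle q\,q'\,J(q)$ of the triangle $\triang(q,q',J(q))$ equals the angle at $q'$ of the right triangle $\triang(q',q_0,p_2)$, whose legs are $\ell$ and $b/2$ and whose right angle is at $q_0$.

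I would then apply the side comparison of \cref{l.SAS} (via the lower bound $\ge\kappa$) to $\triang(q,q',J(q))$ to get $\cosh\lambda s\le g(c,\alpha)$, where $g(c,\alpha):=\cosh 2\lambda\ell\cosh 2\lambda c-\sinh 2\lambda\ell\sinh 2\lambda c\cos\alpha$ is the law-of-cosines value in $M_\kappa$ for a triangle with sides $2\ell$, $2c$ and included angle $\alpha$. The remaining task is to bound the two unknowns $c$ and $\alpha$ in terms of $\ell$ and $b$ using the right triangle $\triang(q',q_0,p_2)$. For the side, the hypotenuse comparison in \cref{l.SAS} (again $\ge\kappa$) gives $c\le c_\kappa$ with $\cosh\lambda c_\kappa=\cosh\lambda\ell\cosh\tfrac{\lambda b}{2}$. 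For the angle I would use \emph{both} curvature bounds: nonpositive curvature (the CAT(0) property) makes the angle sum of $\triang(q',q_0,p_2)$ at most $\pi$, hence $\alpha+\beta\le\pi/2$ where $\beta$ is the angle at $p_2$; and the base-angle comparison of \cref{l.SAS} (using $\ge\kappa$, legitimate since both $\alpha,\beta<\pi/2$) gives $\beta\ge\beta_\kappa$, the corresponding angle in the $M_\kappa$ comparison right triangle. Thus $\alpha\le\pi/2-\beta\le\pi/2-\beta_\kappa=:\theta$, and the sine rule in $M_\kappa$ identifies $\cos\theta=\sin\beta_\kappa=\sinh\lambda\ell/\sinh\lambda c_\kappa$.

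Finally I would substitute. A short computation shows $g$ is nondecreasing in each of its arguments on the relevant ranges — for the $c$-variable one uses $c\ge\ell$, which holds because $q_0$ is also the nearest-point projection of $q'$ to $\gamma$ — so $\cosh\lambda s\le g(c,\alpha)\le g(c_\kappa,\theta)$. Expanding $g(c_\kappa,\theta)$ with $\sinh 2x=2\sinh x\cosh x$ and $\cosh 2x=2\cosh^2 x-1$, and using $\cosh\lambda c_\kappa=\cosh\lambda\ell\cosh\tfrac{\lambda b}{2}$ together with $\cos\theta=\sinh\lambda\ell/\sinh\lambda c_\kappa$, turns $g(c_\kappa,\theta)$ into the right-hand side of \eqref{e.complicated}. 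The main obstacle is precisely the angle estimate: the lower curvature bound by itself controls $\alpha$ from below, giving only an upper bound on $\cos\alpha$ (the wrong direction), so the proof genuinely needs the interplay of the two bounds — nonpositive curvature to trade the apex angle for the base angle through the angle-sum inequality, and $\ge\kappa$ to pin that base angle down — combined with the monotonicity of $g$ that makes replacing $(c,\alpha)$ by the extremal pair $(c_\kappa,\theta)$ valid.
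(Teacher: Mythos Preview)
Your argument is correct and is essentially the paper's own proof with renamed variables: your $q_0,p_2,q',c,\alpha,\beta$ are the paper's $p_0,p_1,\sigma_{p_0}(q),d/2,\beta,\alpha$, and the chain $g(c,\alpha)\le g(c_\kappa,\theta)$ is exactly the paper's sequence of inequalities (CAT(0) angle sum, then $\alpha\ge\tilde\alpha$ from \cref{l.SAS}, then $d\le\tilde d$ using $d\ge 2\ell$). The final substitution via $\cosh\lambda c_\kappa=\cosh\lambda\ell\cosh\tfrac{\lambda b}{2}$ and $\cos\theta=\sinh\lambda\ell/\sinh\lambda c_\kappa$ matches the paper's use of \eqref{e.cosh}--\eqref{e.sinh}.
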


\begin{proof}
Multiplying the metric by a constant, we can assume that $\kappa=-1$, i.e., $\lambda=1$.

Let $J$ be a transvection that translates a geodesic $\gamma$ by length $b$, and let $q \in H$.
Let $s = \dist(J(q),q)$ and $\ell = \dist(q,\gamma)$.
We can assume that $b > 0$ (because for $b = 0$ formula \eqref{e.complicated} means $s=0$)
and $\ell > 0$ (because for $\ell = 0$ formula \eqref{e.complicated} means $s \le b$).

Let $p_0$ be the point in $\gamma$ which is closest to $q$.
Let $p_1$ be the midpoint of $p_0$ and $J(p_0)$;
then $J = \sigma_{p_1} \circ \sigma_{p_0}$.
Consider the triangle with vertices $p_0$, $p_1$, and $\sigma_{p_0}(q)$.
The angle at vertex $p_0$ is $\pi/2$;
let $\alpha$ and $\beta$ the angles at vertices $p_1$ and $\sigma_{p_0}(q)$,
respectively.
See \cref{f.transvection}.

\psfrag{p0}[r][r]{$p_0$}
\psfrag{q}[r][r]{$q$}
\psfrag{z}[r][r]{$\sigma_{p_0}(q)$}
\psfrag{Jq}[l][l]{$J(q)$}
\psfrag{Jp0}[l][l]{$J(p_0)$}
\psfrag{w}[l][l]{$J(\sigma_{p_0}(q)) = \sigma_{p_1}(q)$}	
\psfrag{p1}[l][l]{$p_1$}
\psfrag{s}[c][c]{$s$}
\psfrag{al}[r][r]{$\alpha$}
\psfrag{be}[c][c]{$\beta$}
\psfrag{l}[r][r]{$\ell$}
\psfrag{b/2}[c][c]{$b/2$}
\psfrag{d1}[l][l]{$d/2$}
\psfrag{d2}[r][r]{$d/2$}
\begin{figure}[htb]
\begin{center}
\includegraphics[width=.55\textwidth]{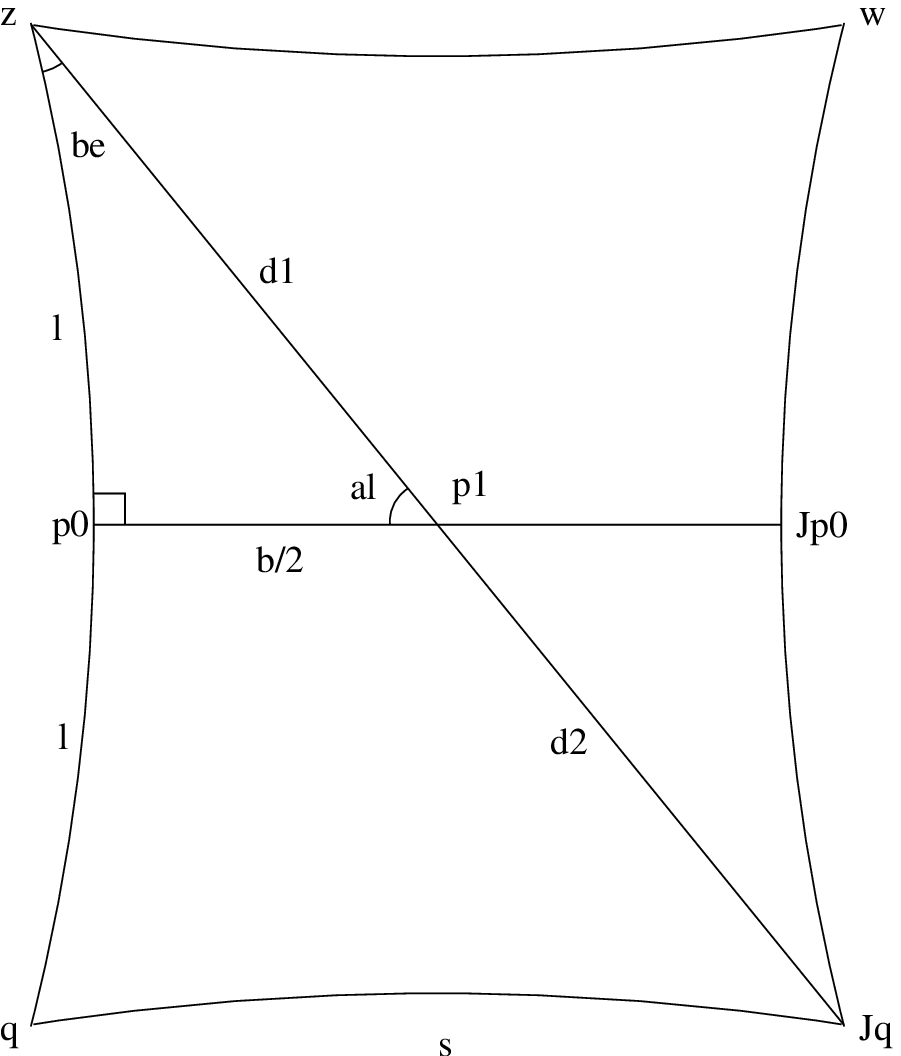}
\end{center}
\caption{Proof of \cref{l.displ_estimate_again}}\label{f.transvection}
\end{figure}

Now consider an SAS-comparison triangle in the hyperbolic plane $M_{-1}$,
more precisely a triangle in $M_{-1}$ with two sides $b/2$ and $\ell$
and angle between them $\pi/2$. Let $\tilde \alpha$, $\tilde \beta$ 
be the respective angles, and let $\tilde{d}/2$ be the third side.
By \cref{l.SAS}, we have
$$
d \le \tilde d, \quad
\alpha \ge \tilde \alpha, \quad
\beta \ge \tilde \beta.
$$
By the law of cosines in $M_{-1}$, we have:
\begin{equation}\label{e.cosh}
\cosh\frac{\tilde d}{2} = \cosh \frac{b}{2} \cosh \ell \, .
\end{equation}
By the law of sines in $M_{-1}$, we have:
\begin{equation}\label{e.sinh}
\sinh \frac{\tilde{d}}{2} = \frac{\sinh \ell}{\sin \tilde \alpha} \, . 
\end{equation}

By the law of cosines in $H$ (an inequality which comes 
automatically from the curvature lower bound), we have:
\begin{alignat*}{2}
\cosh s 
&\le \cosh d \cosh 2 \ell - \sinh d \sinh 2\ell \cos \beta 
&\hspace{-1cm}&\text{(law of cosines)} \\
&\le \cosh d \cosh 2 \ell - \sinh d \sinh 2\ell \sin \alpha 
&\hspace{-1cm}&\text{(since $\alpha + \beta \le \pi/2$)} \\
&\le \cosh d \cosh 2 \ell - \sinh d \sinh 2\ell \sin \tilde \alpha 
&\hspace{-1cm}&\text{(since $\tilde \alpha \le \alpha \le \pi/2$)} \\
&\le \cosh \tilde d \cosh 2 \ell - \sinh \tilde d \sinh 2\ell \sin \tilde \alpha 
&\hspace{-1cm}&\text{(since $\tilde d \ge d \ge 2\ell$)} \\
&= \left(2\cosh^2 \frac{\tilde d}{2} - 1\right) 
\cosh 2 \ell - 2 \sinh \frac{\tilde d}{2} \cosh\frac{\tilde d}{2} \sinh 2\ell \sin \tilde \alpha.
&\hspace{-1cm}& 
\end{alignat*}
Substituting \eqref{e.cosh} and \eqref{e.sinh} and manipulating,
we obtain \eqref{e.complicated}.
\end{proof}

\begin{remark}
\cref{f.transvection} is not necessarily contained 
in a ``two-dimensional'' totally geodesic subspace. 
If this were the case, it is possible to show that the 
following improved version of \eqref{e.complicated} holds:
$\cosh \lambda s \le  \cosh \lambda b \cosh^2 \lambda \ell - \sinh^2 \lambda \ell$.
Moreover, if $H=M_{-\lambda^2}$, then this becomes an equality, expressing the 
summit $s$ of a Saccheri quadrilateral as a function of the legs $\ell$ and 
the base $b$; see~\cite[p.~104]{BK}.~\closeremark 
\end{remark}

\section{Appendix: Some lemmas on Killing fields} \label{s.killing} 

In this appendix, we complete the proofs of \cref{l.displ_estimate_C,l.homog_inf},
which were proven in \cref{ss.prelim_C} only in the finite dimensional case.

\begin{remark}\label{r.symm_bound}
The sectional curvatures of a symmetric Cartan--Hadamard manifold are bounded from below.
Indeed, since isometries act transitively, it is sufficient to 
show that sectional curvatures are bounded at each point; but this 
follows directly from the boundedness of the Riemann tensor.\closeremark
\end{remark}

\begin{proof}[Proof of part (II) of \cref{l.displ_estimate_C}]
Let $\kappa$ be the infimum of the sectional curvature of $H$,
which is finite by the previous remark.
We will show that (II) holds with 
$$
f(\ell) :=  \cosh \left( 2 \sqrt{-\kappa} \cdot \ell \right).
$$

Fix $p_0, p$ in $H$, $v_0 \in T_{p_0} H$.
Assume $v_0 \neq 0$, otherwise there is nothing to prove.
Let $\alpha$ be the geodesic passing through $p_0$ with velocity $v_0$.
Let $\beta$ be a unit-speed geodesic joining $p_0$ and $p$.
Let $\eta(t) = \xi_{v_0}(\beta(t))$. Then 
(see \cite[Prop.~2.2, Ch~XIII]{Lang}) 
$\eta$ is a Jacobi field over the geodesic $\beta$.

In view of the Rauch--Berger comparison theorem (see \cite{Biliotti, CE}),
in order to show that
\begin{equation}\label{e.whatever}
\|\eta(t) \| \le f(d( \beta(t) , \alpha)) \|v_0\| \le f(t) \|v_0\|,
\end{equation}
we need only to consider the case where $H$ is the hyperbolic plane of 
constant curvature $\kappa$. But then it is a simple calculation; actually, 
in this case, the first inequality in \eqref{e.whatever} becomes an equality.
\end{proof}

\begin{lemma}\label{l.onto} 
Let $H$ be a symmetric Cartan--Hadamard manifold.
For any $p_0, p$ in $H$, $v \in T_{p} H$, there exists $\chi \in \fm_{p_0}$ such that
$\chi(p)=v$.
\end{lemma}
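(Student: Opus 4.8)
The plan is to recognize the evaluation map $v_0 \mapsto \xi_{v_0}(p)$ as a Jacobi-field solution operator and to use the symmetry of $H$ to turn it into the $\cosh$ of a bounded nonpositive operator, which is then automatically invertible.

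Concretely, I would fix $p_0$ and, as in the proof of \cref{l.homog_inf}, consider the linear map $L_p \!: T_{p_0}H \to T_pH$, $v_0 \mapsto \xi_{v_0}(p)$; the lemma is exactly the assertion that $L_p$ is onto. Parts (I) and (II) of \cref{l.displ_estimate_C} already tell me that $L_p$ is a bounded operator bounded below by $\|L_p v_0\| \ge \|v_0\|$, so it has closed range; it thus suffices to prove that $L_p$ is invertible.

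Next I would take the unit-speed geodesic $\beta \!: [0,T] \to H$ with $\beta(0)=p_0$ and $\beta(T)=p$, where $T = d(p_0,p)$, and set $\eta(t) := \xi_{v_0}(\beta(t))$. As a restriction of a Killing field, $\eta$ is a Jacobi field along $\beta$, and since $\xi_{v_0} \in \fm_{p_0}$ has vanishing covariant derivative at $p_0$ one has $\eta(0)=v_0$ and $\nabla_{\beta'}\eta(0)=0$ — the same computation as in the proof of part (I) — so that $L_p v_0 = \eta(T)$. Trivializing each $T_{\beta(t)}H$ by parallel transport along $\beta$ makes $\beta'$ constant, and because $H$ is symmetric the curvature tensor is parallel; hence the Jacobi operator $w \mapsto R(w,\beta')\beta'$ becomes a fixed self-adjoint operator $\mathcal{R}$ on the model Hilbert space, with $\mathcal{R} \le 0$ by nonpositive curvature and $\mathcal{R}$ bounded by \cref{r.symm_bound}. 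In these coordinates the Jacobi equation is $\ddot{\eta} + \mathcal{R}\,\eta = 0$ with $\eta(0)=v_0$, $\dot\eta(0)=0$, whose solution is $\eta(t) = \cosh\!\big(t\sqrt{-\mathcal{R}}\,\big)v_0$, where $\cosh$ is given by the power series in the bounded operator $-\mathcal{R}\ge 0$.

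Therefore, after the parallel identification, $L_p$ is the operator $\cosh(T\sqrt{-\mathcal{R}})$, whose spectrum lies in $[1,\infty)$; it is self-adjoint and invertible with bounded inverse. Given $v \in T_pH$, I would let $\hat v$ be its parallel transport to $p_0$ and set $v_0 := \cosh(T\sqrt{-\mathcal{R}})^{-1}\hat v$; then $\chi := \xi_{v_0} \in \fm_{p_0}$ satisfies $\chi(p)=v$. The one genuinely infinite-dimensional point — and the step I expect to need the most care — is the reduction to a constant-coefficient equation and the functional calculus it supports: it is precisely the boundedness of the curvature provided by \cref{r.symm_bound} that makes $\mathcal{R}$ a bounded operator and legitimizes $\cosh(T\sqrt{-\mathcal{R}})$, whereas in finite dimensions surjectivity would follow for free from injectivity.
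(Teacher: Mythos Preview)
Your argument is correct but takes a genuinely different route from the paper. You exploit the fact that in a symmetric space the curvature tensor is parallel, so after parallel trivialization along $\beta$ the Jacobi operator becomes a \emph{constant} bounded self-adjoint operator $\mathcal{R}\le 0$; the solution operator for the initial-value problem $\eta(0)=v_0$, $\dot\eta(0)=0$ is then explicitly $\cosh\!\big(T\sqrt{-\mathcal{R}}\big)$, which is invertible by elementary spectral calculus. The paper, by contrast, never computes $L_p$: it invokes Cartan--Hadamard--McAlpin (no conjugate points) to produce a Jacobi field $\eta$ with the \emph{boundary} conditions $\eta(-\ell)=0$, $\eta(\ell)=v$, and then uses the geodesic symmetry $\sigma_{p_0}$ to antisymmetrize, setting $\chi:=\eta-(\sigma_{p_0})_*\eta$; this field has $\nabla_{\beta'}\chi(0)=0$ and $\chi(\ell)=v$, hence extends to the desired element of $\fm_{p_0}$. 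Your approach yields more --- an explicit formula for $L_p$ and its inverse --- at the cost of invoking $\nabla R=0$ and the functional calculus for bounded self-adjoint operators; the paper's approach is more bare-hands geometric, trading the operator-theoretic input for the existence of two-point Jacobi fields along $[-\ell,\ell]$ and a direct use of the symmetry map.
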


\begin{proof}
Let $\ell = d(p,p_0)$.
Assume $\ell>0$, otherwise the claim is trivial.
Let $\beta \!:  \R \to H$ be the geodesic such that $\beta(0) = p_0$ and $\beta(\ell) = p$.
Let $q = \beta(-\ell)$.
There is a Jacobi vector field $\eta$ over $\beta$ such that 
$$
\eta(\ell) = v , \quad \eta(-\ell) = 0.
$$
(The existence of $\eta$ follows from the Cartan--Hadamard--McAlpin theorem; 
see \cite[\S~IX.3]{Lang} and \cite[Thrm.~IX.3.1]{Lang}.) We claim that 
$\xi_{2 \eta(0)}$ is the sought-after Killing field.

Let $\sigma = \sigma_{p_0}$ be the symmetry at $p_0$; then $\sigma(\beta(t)) = \beta(-t)$.
Let $\zeta$ be the Jacobi field over $\beta$ obtained by pushing-forward $\eta$ by $\sigma$, that is,
$$
\zeta(t) = T\sigma(\beta(-t)) \cdot \eta(-t).
$$
Consider the Jacobi field $\chi = \eta - \zeta$. (See \cref{f.jacobis}.)
\psfrag{0}[l][l]{$p_0$}
\psfrag{p}[r][r]{$p$}
\psfrag{s}[l][l]{$\sigma_{p_0}(p)$}
\psfrag{e}[r][r]{$\eta$}
\psfrag{z}[c][c]{$\zeta$}
\psfrag{x}[c][c]{$\chi$}
\begin{figure}[htb]
\begin{center}
\includegraphics[width=.55\textwidth]{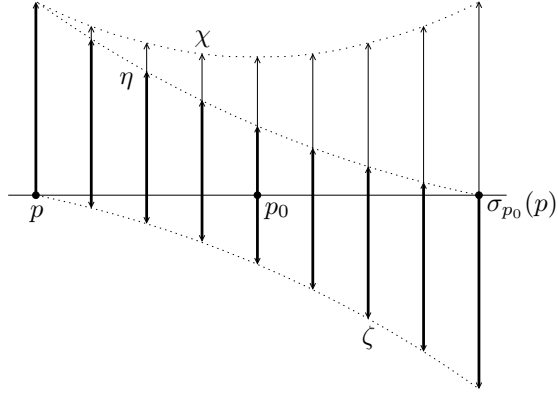}
\end{center}
\caption{The Jacobi field $\eta$, its reflection $\zeta$, and the field $\chi = \eta - \zeta$.}
\label{f.jacobis}
\end{figure}

The isometry $\sigma$ preserves covariant derivative, hence  
$$
\nabla_{\beta'}\zeta(t) = - T\sigma(\beta(-t)) \cdot \nabla_{\beta'}\eta(-t).
$$
Taking $t=0$, we get
$\nabla_{\beta'} \chi (0)= 0$.
As a consequence (use \cite[Prop.~XIII.5.6]{Lang}), the Jacobi field $\chi$ over $\beta$
can be extended to a Killing field $\chi \in \fm_{p_0}$. 
This is exactly $\xi_{\chi(0)} = \xi_{2\eta(0)}$.
\end{proof}

\section{Appendix: Differentiability of the Cartan barycenter}\label{s.diff_cartan}

In this appendix, we give the proof of \cref{l.diffbar}.

Assume $H$ is a Cartan--Hadamard manifold.
Let $\mu$ be a probability measure of bounded support (p.o.b.s.) on $H$,
and let $f=f_\mu$ be given by \eqref{e.sum_squares}.

\begin{lemma}
The gradient vector field\footnote{Defined by the relation 
$Tf_p (v) = \langle \grad f (p), v \rangle$.} of $f$
is given by:
$$
\grad f (p) = - \int_H \exp_p^{-1}(q)  \, \mathrm{d}\mu(q) \, .
$$
\end{lemma}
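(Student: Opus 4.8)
The plan is to compute the gradient pointwise in $q$ and then differentiate under the integral sign. Because $H$ is a Cartan--Hadamard manifold, the exponential map $\exp_p \!: T_pH \to H$ is a diffeomorphism for every $p$ (by the Cartan--Hadamard--McAlpin theorem cited in \cref{sss.cartan-h}); in particular there is no cut locus, so for each fixed $q$ the squared-distance function $\rho_q(p) := \dist(p,q)^2 = \|\exp_p^{-1}(q)\|^2$ is smooth on all of $H$. The statement then reduces to two ingredients: (i) an explicit formula for $\grad_p \rho_q$, and (ii) a justification that $\grad$ and $\int_H (\cdots)\,\mathrm{d}\mu(q)$ may be interchanged.

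For ingredient (i), I would use the first variation of arc-energy. Fix $q$ and a test vector $w \in T_pH$, take a curve $p(u)$ with $p(0)=p$ and $\dot p(0)=w$, and let $\gamma_u \!: [0,1]\to H$ be the unique geodesic from $p(u)$ to $q$, so that the energy $E(\gamma_u) = \tfrac12\int_0^1 \|\gamma_u'(t)\|^2\,\mathrm{d}t$ equals $\tfrac12\dist(p(u),q)^2$. Differentiating at $u=0$ via the first variation formula, the interior term drops out because $\gamma_0$ is a geodesic, the boundary term at $t=1$ vanishes because $q$ is held fixed, and only the boundary term at $t=0$ survives, equal to $-\langle w, \gamma_0'(0)\rangle$. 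Since $\gamma_0'(0) = \exp_p^{-1}(q)$, this yields the standard identity
\[
\grad_p\!\left(\tfrac12\dist(\cdot,q)^2\right) = -\exp_p^{-1}(q).
\]
(Alternatively, invoke the Gauss lemma: $\grad \dist(\cdot,q)$ is the outward unit radial field $-\exp_p^{-1}(q)/\dist(p,q)$, and multiplying by $2\dist(\cdot,q)$ gives the same conclusion.) Integrating this pointwise identity against $\mu$ then produces $\grad f_\mu(p)$, up to the normalizing constant fixed by the convention in \eqref{e.sum_squares}.

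Ingredient (ii) is where the real work lies, and I expect it to be the main obstacle, especially in the infinite-dimensional (Hilbert-manifold) setting. The point is to show that $f_\mu$ is Fr\'echet differentiable with $Df_\mu|_p = \int_H D(\rho_q)|_p\,\mathrm{d}\mu(q)$. The hypothesis that $\mu$ has bounded support is exactly what makes this work: on a fixed bounded neighbourhood of $p$ the map $(p',q) \mapsto \exp_{p'}^{-1}(q)$ is jointly continuous and bounded (its norm is $\dist(p',q)$, which is bounded since $\supp\mu$ is bounded), so the difference quotients of $\rho_q$ are dominated uniformly in $q \in \supp\mu$ and converge to $D(\rho_q)|_p$. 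A dominated-convergence argument then passes the derivative inside the integral and identifies $\grad f_\mu(p)$ with the (Bochner) integral of the continuous, boundedly supported $T_pH$-valued map $q \mapsto -\exp_p^{-1}(q)$. The finite-dimensional case is routine; the only real care in infinite dimensions is to control the first-order remainder of $\rho_q$ uniformly over $\supp\mu$, which again follows from boundedness of the support together with the local Lipschitz dependence of $\exp_{p'}^{-1}(q)$ on $p'$ guaranteed by the smoothness of the exponential map on a Cartan--Hadamard manifold.
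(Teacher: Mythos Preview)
Your argument is correct and is precisely the standard one: the first-variation formula (or the Gauss lemma) gives $\grad_p\!\big(\tfrac12\dist(\cdot,q)^2\big)=-\exp_p^{-1}(q)$ pointwise, and bounded support of $\mu$ lets you pass the derivative through the integral by dominated convergence. The paper does not actually prove this lemma; it simply cites \cite[p.~132]{BK}, where exactly this computation appears, so your write-up is a faithful expansion of that reference rather than a different route.

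Two small remarks. First, you are right to flag the normalizing constant: with $f_\mu$ defined as in \eqref{e.sum_squares} (no $\tfrac12$), the honest formula carries a factor of $2$; the paper's version drops it, which is harmless since only the zero of $\grad f_\mu$ and the positivity of its covariant derivative are ever used. Second, your care about the infinite-dimensional case is well placed but the argument you sketch is sufficient: joint continuity of $(p',q)\mapsto\exp_{p'}^{-1}(q)$ and compactness of $\{p'\}\times\overline{\supp\mu}$ in the relevant chart give the uniform control on the remainder needed for Fr\'echet differentiability, and the resulting integrand is a bounded continuous $T_pH$-valued function of $q$, hence Bochner integrable.
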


\begin{proof}
See \cite[p.~132]{BK}.
\end{proof}

\begin{lemma}\label{l.invert}
For each $p$, the linear map 
$$L \!:  v \in T_p M \mapsto \nabla_v \grad f (p)  \in T_p M$$ 
(given by covariant derivative) 
is bounded and has a bounded inverse. 
\end{lemma}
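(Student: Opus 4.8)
The plan is to recognise $L$ as the Hessian of $f=f_\mu$ at $p$ and to sandwich it between two multiples of the identity. Differentiating the gradient formula of the previous lemma in a direction $v\in T_pH$ and interchanging the covariant derivative with the integral (legitimate because $\mu$ has bounded support and $q\mapsto\exp_p^{-1}(q)$ is smooth on a Cartan--Hadamard manifold, so the integrand is dominated), one obtains, for $v,w\in T_pH$,
$$
\langle Lv,w\rangle=\int_H \operatorname{Hess}_p\!\big(d^2(q,\cdot)\big)(v,w)\,\mathrm d\mu(q).
$$
Thus $L$ is the symmetric operator associated with $\operatorname{Hess}_p f$, and since $\operatorname{Hess}(d^2(q,\cdot))=2\operatorname{Hess}(\tfrac12 d^2(q,\cdot))$, the whole problem reduces to two-sided bounds, uniform over $q$ in the bounded set $\operatorname{supp}\mu$, for the Hessian of the elementary function $\tfrac12 d^2(q,\cdot)$.

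To estimate that Hessian at $p$ I would fix $q$, put $r=d(q,\cdot)$, and split a tangent vector $v$ into its radial part along $\grad r$ and its orthogonal part $v^\perp$. Using $\operatorname{Hess}(\tfrac12 r^2)=\mathrm dr\otimes\mathrm dr+r\operatorname{Hess} r$ together with the fact that $\operatorname{Hess} r$ annihilates the radial direction, the radial contribution is exactly $|v^\parallel|^2$, while the orthogonal contribution is $r\operatorname{Hess} r(v^\perp,v^\perp)$. The ratio $r\operatorname{Hess} r(v^\perp,v^\perp)/|v^\perp|^2$ I would control by the Jacobi-field (Rauch) comparison that already underlies part~(II) of \cref{l.displ_estimate_C}: it is at least its Euclidean value $1$ (forced from below by nonpositive curvature) and at most $\sqrt{-\kappa}\,r\coth(\sqrt{-\kappa}\,r)$ once the sectional curvature is bounded below by $\kappa$.

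The lower bound then yields $\operatorname{Hess}_p(\tfrac12 d^2(q,\cdot))\ge\mathrm{Id}$ for every $q$, using only nonpositive curvature; integrating against the probability measure $\mu$ gives $\langle Lv,v\rangle\ge c\|v\|^2$ with $c>0$. For the upper bound I would invoke boundedness of $\operatorname{supp}\mu$, so that $r=d(q,p)\le R$ for the relevant $q$, together with the curvature lower bound $\kappa$; this gives $\operatorname{Hess}_p(\tfrac12 d^2(q,\cdot))\le c(R,\kappa)\,\mathrm{Id}$ uniformly, hence $\|L\|\le C$ after integrating. Finally, a symmetric operator with $c\,\mathrm{Id}\le L\le C\,\mathrm{Id}$ is bounded and, by the spectral theorem (equivalently Lax--Milgram), invertible with $\|L^{-1}\|\le 1/c$, which is exactly the assertion.

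The one delicate point is the upper bound in infinite dimensions: there $\|L\|<\infty$ is not automatic and genuinely requires a uniform lower bound $\kappa$ on the sectional curvature, to prevent the orthogonal Jacobi fields from growing too fast. In finite dimensions the upper bound is trivial; in the infinite-dimensional cases to which we apply the lemma the space is symmetric, so the sectional curvature is bounded below by \cref{r.symm_bound} and the Rauch comparison applies. I expect this curvature-controlled Jacobi-field estimate to be the main obstacle, exactly as in part~(II) of \cref{l.displ_estimate_C}, whereas the lower bound and the concluding functional-analytic step are routine.
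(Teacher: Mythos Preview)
Your argument is correct and follows essentially the same route as the paper: both identify $L$ with the integrated Hessian of $d^2(q,\cdot)$ and obtain the lower bound $L\ge c\,\Id$ from nonpositive curvature. The paper simply cites Karcher for the fact that each map $v\mapsto\nabla_v\xi_q(p)$ (with $\xi_q(p)=-\exp_p^{-1}q$) is symmetric and $\ge\Id$, then integrates over $q$. Two differences are worth noting. First, you go further and derive the upper bound $L\le C\,\Id$ explicitly via Rauch comparison with the curvature lower bound $\kappa$; the paper's proof does not address boundedness of $L$ at all, treating it as given. Your observation that this step genuinely needs $\kappa>-\infty$ in infinite dimensions is well taken and fills a gap the paper leaves implicit. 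Second, for invertibility you invoke Lax--Milgram on the two-sided sandwich $c\,\Id\le L\le C\,\Id$, whereas the paper uses a Neumann-series trick: for small $c>0$ one estimates $\|(\Id-cL)v\|^2\le(1-2c+c^2\|L\|^2)\|v\|^2<\|v\|^2$, so $cL$ is invertible. The paper's device is slightly slicker in that it needs only $L\ge\Id$ together with $\|L\|<\infty$, not a quantitative upper bound; your version has the merit of actually showing where that finiteness comes from.
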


\begin{proof}
By the previous lemma, 
$\grad f = \int_H \xi_q \, \mathrm{d}\mu(q)$, where $\xi_q$ is the vector field
$\xi_q (p) = - \exp_p^{-1}(q)$.
For each $q$ and $p$, 
the linear map $v \in T_p M \mapsto \nabla_v \xi_q (p)  \in T_p M$
is symmetric and $\ge \Id$; see \cite[p.~172, 188]{Karcher}.
Integrating over $q$, we conclude that $L$ is self-adjoint and $\ge \Id$.
Now we need the following fact:

\begin{claim} 
Let $L$ be a bounded self-adjoint operator 
on a real Hilbert space such that $L \ge \Id$.
Then $L$ is invertible, with a bounded inverse.
\end{claim}

\begin{proof}[Proof of the Claim]
Let $c>0$ and estimate
$$
\|(\Id- cL)  v \|^2 
= \langle v, v \rangle - 2 c \langle Lv, v \rangle + c^2 \langle Lv, Lv \rangle
\le \left( 1 - 2c  + c^2 \|L\|^ 2\right) \|v\|^2 .
$$
Thus if $c$ is small enough, then $\|\Id - c L \| < 1$.
In particular, $cL$ is invertible, and so is $L$. 
\end{proof}
The lemma follows.
\end{proof}

\begin{proof}[Proof of \cref{l.diffbar}]
Define a one-parameter family of vector fields $\xi_t$ ($t \in I$)
on $H$ by 
$$
\xi_t(p) = - \frac{1}{T} \int_0^T \exp_p^{-1}(h(t,s))  \, \mathrm{d}s \, .
$$
Then $\bar{h}(t)$ is the unique solution of $\xi_t(\bar{h}(t)) = 0$.
To deduce differentiability of $\bar{h}$ from the Implicit Function Theorem,
it is sufficient to check that for each $t \in H$ and $p \in H$,
the linear map $v \in T_p M \mapsto \nabla_v \xi_t (p) \in T_p M$ (given by covariant derivative)
is invertible (with a bounded inverse) -- see \cite[p.~143]{BK} for details. 
But this was proved in \cref{l.invert} above.
\end{proof}

\begin{remark}
Another approach to the differentiability of the barycenter is to consider 
barycenters in the tangent bundle $TH$; see \cite{ArLi}.~\closeremark 
\end{remark}


\begin{small}

\phantomsection 
\addcontentsline{toc}{section}{References}

\vspace{0.5cm}

\noindent
\begin{minipage}[t]{.4\linewidth}
Jairo Bochi


PUC--Rio

Rua Marqu\^es de S.~Vicente, 225

Rio de Janeiro, Brazil

jairo@mat.puc-rio.br

\href{http://www.mat.puc-rio.br/~jairo}{www.mat.puc-rio.br/$\sim$jairo}
\end{minipage}
\hspace{.2\linewidth}
\begin{minipage}[t]{.4\linewidth}
Andr\'es Navas

Universidad de Santiago

Alameda 3363, Estaci\'on Central

Santiago, Chile

andres.navas@usach.cl
\end{minipage}

\end{small}

\end{document}